\RequirePackage{amsmath}
\documentclass[smallextended]{svjour3}
\usepackage{lmodern}

\usepackage{amsthm}

\PassOptionsToPackage{numbers, compress}{natbib}

\usepackage[utf8]{inputenc} 
\usepackage[T1]{fontenc}    
\usepackage{url}            
\usepackage{booktabs}       
\usepackage{amsfonts}       
\usepackage{nicefrac}       
\usepackage{color}

\usepackage{amssymb,amsfonts,bm,mathtools}

\newcommand{\R}{\mathbb{R}}
\newcommand{\E}{\mathbb{E}}
\newcommand{\PP}{\mathbb{P}}

\renewcommand{\P}{\mathbb{P}}
\newcommand{\cP}{\mathcal{P}}
\newcommand{\cF}{\mathcal{F}}
\newcommand{\cH}{\mathcal{H}}
\newcommand{\veps}{\varepsilon}

\newcommand{\bfx}{{\boldsymbol{x}}}
\newcommand{\bfp}{{\boldsymbol{p}}}
\newcommand{\bftheta}{{\boldsymbol{\theta}}}

\newcommand{\bfphi}{{\boldsymbol{\phi}}}
\newcommand{\bfeta}{{\boldsymbol{\eta}}}
\newcommand{\bfF}{{\boldsymbol{F}}}
\newcommand{\bfG}{{\boldsymbol{G}}}
\newcommand{\bfdelta}{{\boldsymbol{\delta}}}
\newcommand{\bfI}{{\boldsymbol{I}}}

\DeclareMathOperator*{\argmax}{arg\,max}

\DeclareMathOperator*{\esssup}{ess\,sup}

\title{A Mean-Field Optimal Control Formulation of Deep Learning}

\author{Weinan E, Jiequn Han, Qianxiao Li}

\institute{
  Weinan E \at
  Princeton University,
  Princeton, NJ 08544, USA,\\
  Beijing Institute of Big Data Research and Peking University,
  Beijing, China 100871
  \and
  Jiequn Han \at
  Princeton University, Princeton, NJ 08544, USA
  \and
  Qianxiao Li \at
  Institute of High Performance Computing,
  Agency for Science, Technology and Research.\\
  1 Fusionopolis Way, Connexis North, Singapore 138632
}

\date{}

\begin{document}

\maketitle

\begin{abstract}
Recent work linking deep neural networks and dynamical systems opened up new avenues to analyze deep learning. In particular, it is observed that new insights can be obtained by recasting deep learning as an optimal control problem on difference or differential equations. However, the mathematical aspects of such a formulation have not been systematically explored. This paper introduces the mathematical formulation of the population risk minimization problem in deep learning as a mean-field optimal control problem. Mirroring the development of classical optimal control, we state and prove optimality conditions of both the Hamilton-Jacobi-Bellman type and the Pontryagin type. These mean-field results reflect the probabilistic nature of the learning problem. In addition, by appealing to the mean-field Pontryagin's maximum principle, we establish some quantitative relationships between population and empirical learning problems. This serves to establish a mathematical foundation for investigating the algorithmic and theoretical connections between optimal control and deep learning.
\end{abstract}

\section{Introduction}
\label{sec:intro}

Deep learning~\cite{bengio2009learning,lecun2015deep,goodfellow2016deep} has become a primary tool in many modern machine learning tasks, such as image classification and segmentation. Consequently, there is a pressing need to provide a solid mathematical framework to analyze various aspects of deep neural networks. The recent line of work on linking dynamical systems, optimal control and deep learning has suggested such a candidate~\cite{e2017proposal,li2017maximum,li2018optimal,haber2017stable,chang2017multi,chang2017reversible,lu2017beyond,sonoda2017double,li2017deep,chen2018neural}. In this view, ResNet~\cite{he2016deep} can be regarded as a time-discretization of a continuous-time dynamical system. Learning (usually in the empirical risk minimization form) is then recast as an optimal control problem, from which novel algorithms~\cite{li2017maximum,li2018optimal} and network structures~\cite{haber2017stable,chang2017multi,chang2017reversible,lu2017beyond} can be designed. An attractive feature of this approach is that, the compositional structure, which is widely considered the essence of deep neural networks is explicitly taken into account in the time-evolution of the dynamical systems. 

While most prior work on the dynamical systems viewpoint of deep learning have focused on algorithms and network structures, this paper aims to study the fundamental mathematical aspects of the formulation. Indeed, we show that  the most general formulation of the population risk minimization problem can be regarded as a {\it mean-field optimal control problem}, in the sense that the optimal control parameters (or equivalently, the trainable weights) depend on the population distribution of input-target pairs. 
Our task is then to analyze the mathematical properties of this mean-field control problem. Mirroring the development of classical optimal control, we will proceed in two parallel, but inter-connected ways, namely the dynamic programming formalism and the maximum principle formalism. 

The paper is organized as follows. We discuss related work in Sec.~\ref{sec:related_work} and introduce the basic mean-field optimal control formulation of deep learning in Sec.~\ref{sec:formulation}. In Sec.~\ref{sec:HJB_derivation}, following the classical dynamic programming approach~\cite{bellman2013dynamic}, we introduce and study the properties of a value function for the mean-field control problem whose state space is an appropriate Wasserstein space of probability measures. By defining an appropriate notion of derivative with respect to probability measures, we show that the value function is related to solutions of an infinite dimensional Hamilton-Jacobi-Bellman (HJB) partial differential equation. With the concept of viscosity solutions~\cite{crandall1983viscosity}, we show in Sec.~\ref{sec:HJB_vicosity} that the HJB equation admits a unique viscosity solution and completely characterize the optimal loss function and the optimal control policy of the mean-field control problem. This establishes a concrete link between the learning problem viewed as a variational problem and the Hamilton-Jacobi-Bellman equation that is associated with the variational problem. It should be noted the essential ideas in the proof of Sec.~\ref{sec:HJB_derivation} and~\ref{sec:HJB_vicosity} are not new, but we present our simplified treatment for this particular setting.

Next, in Sec.~\ref{sec:mean_field_pmp}, we develop the more local theory based on the Pontryagin's maximum principle (PMP)~\cite{pontryagin1987mathematical}. We state and prove a mean-field version of the classical PMP that provides necessary conditions for optimal controls. Further, we study situations when the mean-field PMP admits a unique solution, which then imply that it is also sufficient for optimality, provided an optimal solution exists. We will see in Sec.~\ref{sec:small_T_uniqueness} that compared with the HJB approach, this further requires the fact that the time horizon of the learning problem is small enough. Finally, in Sec.~\ref{sec:error_analysis} we study the relationship between the population risk minimization problem (cast as a mean-field control problem and characterized by a mean-field PMP) and its empirical risk minimization counter-part (cast as a classical control problem and characterized by a classical, sampled PMP). We prove that under appropriate conditions for every stable solution of the mean-field PMP, with high probability there exist close-by solutions of the sampled PMP, and the latter converge in probability to the former, with explicit error estimates on both the distance between the solutions and the distance between their loss function values. This provides a type of \emph{a priori} error estimate that has implications on the generalization ability of neural networks, which is an important and active area of machine learning research. 

Note that it is not the purpose of this paper to prove the sharpest estimates under the most general conditions, thus we have taken the most convenient but reasonable assumptions and the results presented could be sharpened with more technical details. In each section from Sec.~\ref{sec:HJB_derivation} to Sec.~\ref{sec:error_analysis}, we first present the mathematical results, and then discuss the related implications in deep learning. Furthermore, in this work we shall focus our analysis on the continuous idealization of deep residual networks, but we believe that much of the analysis presented also carry over to the discrete domain (i.e.\,discrete layers).

\section{Related work}
\label{sec:related_work}
The connection between back-propagation and optimal control of dynamical systems is known since the earlier works on control and deep learning~\cite{bryson1975applied,athans2013optimal,le1988theoretical}. 
Recently, the dynamical systems approach to deep learning was proposed in~\cite{e2017proposal} and explored in the direction of training algorithms based on the PMP and the method of successive approximations~\cite{li2017maximum,li2018optimal}. In another vein, there are also studies on the continuum limit of neural networks~\cite{sonoda2017double,li2017deep} and on designing network architectures for deep learning~\cite{haber2017stable,chang2017multi,chang2017reversible,lu2017beyond} based on dynamical systems and differential equations. 
Instead of analysis of algorithms or architectures, the present paper focuses on the mathematical aspects of the control formulation itself, and develops a mean-field theory that characterize the optimality conditions and value functions using both PDE (HJB) and ODE (PMP) approaches. The over-arching goal is to develop the mathematical foundations of the optimal control formulation of deep learning. 

In the control theory literature, mean-field optimal control is an active area of research. Many works on mean-field games~\cite{lasry2007mean,huang2006large,gueant2011mean,bensoussan2013mean}, the control of McKean-Vlasov systems~\cite{lauriere2014dynamic,pham2017dynamic,pham2018bellman}, and the control of Cucker-Smale systems~\cite{caponigro2015sparse,fornasier2014mean,bongini2017mean} focus on deriving the limiting partial differential equations that characterize the optimal control as the number of agents goes to infinity. This is akin to the theory of the propagation of chaos~\cite{sznitman1991topics}. Meanwhile there are also works discussing the stochastic maximum principle for stochastic differential equations of mean-field type~\cite{andersson2011maximum,buckdahn2011general,carmona2015forward}.
The present paper differs from all previous works in two aspects. First, in the context of continuous-time deep learning, the problem differs from these previous control formulations as the source of randomness are coupled input-target  pairs (the latter determines the terminal loss function, which can now be regarded as a random function). On the other hand, a simplifying feature in our case is that the dynamics, given the input-target pair, are otherwise deterministic. Second, the dynamics of each random realization are independent of the distribution law of the population, and are coupled only through the shared control parameters. This is to be contrasted with optimal control of McKean-Vlasov dynamics~\cite{carmona2015forward,pham2017dynamic,pham2018bellman} or mean-field games~\cite{lasry2007mean,huang2006large,gueant2011mean,bensoussan2013mean}, where the population law directly enters the dynamical equations (and not just through the shared control). Thus, in this sense our dynamical equations are much simpler to analyze. Consequently, although some of our results can be deduced from more general mean-field analysis in the control literature, here we will present simplified derivations tailored to our setting, Note also that there are neural network structures (e.g.\,batch-normalization) that can be considered to have explicit mean-field dynamics, and we defer this discussion to Sec.~\ref{sec:conclusion}. 

\section{From ResNets to mean-field optimal control}
\label{sec:formulation}

Let us now present the optimal control formulation of deep learning as introduced in~\cite{e2017proposal,li2017maximum,li2018optimal}. In the simplest form, the feed-forward propagation in a $T$-layer residual network can be represented by the difference equations
\begin{align}
    x_{t+1} = x_{t} + f(x_{t}, \theta_{t}), \qquad t=0,\dots,T-1.
    \label{eq:discrete_forward}
\end{align}
where $x_0$ is the input (image, time-series, etc.) and $x_T$ is the final output. The final output is then compared with some target  $y_0$ corresponding to $x_0$ via some loss function. The goal of learning is to tune the trainable parameters $\theta_0,\dots,\theta_{T-1}$ such that $x_T$ is close to $y_0$. The only change in the continuous-time idealization of deep residual learning, which we will subsequently focus on, is that instead of the difference equation~\eqref{eq:discrete_forward}, the forward dynamics is now a differential equation. Let us now introduce this formulation more precisely. 

Let $(\Omega,\cF,\P)$ be a fixed and sufficiently rich probability space so that all subsequently required random variables can be constructed. Suppose $x_0\in\R^d$ and $y_0\in\R^l$ are random variables jointly distributed according to $\mu_0 \coloneqq \P_{(x_0,y_0)}$ (hereafter, for each random variable $X$ we denote its distribution or law by $\P_X$). This represents the distribution of the input-target pairs, which we assume can be embedded in Euclidean spaces. 
Consider a set of admissible controls or training weights $\Theta\subseteq \R^m$. In typical deep learning, $\Theta$ is taken as the whole space $\R^m$, but here we consider the more general case where $\Theta$ can be constrained. 
Fix $T>0$ (network ``depth'') and let $f$ (feed-forward dynamics), $\Phi$ (terminal loss function) and $L$ (regularizer) be functions
\begin{align*}
	f: \R^d \times \Theta \rightarrow \R^d, \quad
    \Phi: \R^d \times \R^l \rightarrow \R, \quad
    L: \R^d \times \Theta \rightarrow \R.
\end{align*}
We define the state dynamics as the ordinary differential equation (ODE)
\begin{align}
	\dot{x}_t & = f(x_t, \theta_t)
    \label{eq:cts_forward}
\end{align}
with initial condition equals to the random variable $x_0$. Thus, this is a stochastic ODE, whose only source of randomness is on the initial condition. Consider the set of essentially bounded measurable controls $L^{\infty}([0,T],\Theta)$. To improve clarity, we will reserve bold-faced letters for path-space quantities. For example, $\bftheta \equiv \{ \theta_t: 0\leq t\leq T \}$. In contrast, variables/functions taking values in finite-dimensional Euclidean spaces are not bold-faced. 

The population risk minimization problem in deep learning can hence be posed as the following \emph{mean-field optimal control problem}
\begin{align}
    \begin{split}
        \inf_{\bftheta \in L^{\infty}([0,T],\Theta)}              
        J(\bftheta) &\coloneqq \E_{\mu_0} 
        \left[    
        \Phi(x_T, y_0)                        
        + \int_{0}^{T} L(x_t, \theta_t) dt 
        \right],\\ 
        & \text{Subject to~\eqref{eq:cts_forward}}. 
    \end{split}
    \label{eq:mean_field_ctrl_prob} 
\end{align}
The term ``mean-field'' highlights the fact that $\bftheta$ is shared by a whole population of input-target  pairs, and the optimal control must depend on the law of the input-target random variables. Strictly speaking, the law of $\bfx$ does not enter the forward equations explicitly (unlike e.g.,\,McKean-Vlasov control~\cite{carmona2015forward}), and hence our forward dynamics are not explicitly in mean-field form. Nevertheless, we will use the term ``mean-field'' to emphasize the dependence of the control on the population distribution.  

In contrast, if we were to perform empirical risk minimization, as is often the case in practice (and is the case analyzed by previous work on algorithms~\cite{li2017maximum,li2018optimal}), we would first draw i.i.d.\,samples $\{x_0^i,y_0^i\}_{i=1}^{N}\sim \mu_0$ and pose the \emph{sampled optimal control problem}
\begin{align}
    \begin{split}
        \inf_{\bftheta \in L^{\infty}([0,T],\Theta)}              
        J_N(\bftheta) &\coloneqq \frac{1}{N} \sum_{i=1}^N 
        \left[    
        \Phi(x^{i}_T, y^i_0)                        
        + \int_{0}^{T} L(x^i_t, \theta_t) dt 
        \right],\\ 
        & \text{Subject to} \qquad \dot{x}^{i}_t = f(x^{i}_t, \theta_t), \qquad i=1,\dots,N.
    \end{split}
    \label{eq:sampled_ctrl_prob} 
\end{align}
Thus, the solutions of sampled optimal control problems are typically random variables. We now focus our analysis on the mean-field problem~\eqref{eq:mean_field_ctrl_prob} and only later in Sec.~\ref{sec:error_analysis} relate it with the sampled problem~\eqref{eq:sampled_ctrl_prob}. 

\subsection*{Additional Notation}
Throughout this paper, we always use $w$ to denote the concatenated $(d+l)$-dimensional variable $(x,y)$ where $x\in\R^d$ and $y\in\R^l$. Correspondingly $\bar{f}(w,\theta)\coloneqq(f(x,\theta),0)$ is the extended $(d+l)$-dimensional feed-forward function, $\bar{L}(w,\theta)\coloneqq L(x,\theta)$ is the extended $(d+l)$-dimensional regularization loss, and $\bar{\Phi}(w)\coloneqq\Phi(x,y)$ still denotes the terminal loss function. We denote by 
$x\cdot y$ the inner product of two Euclidean vectors $x$ and $y$ with the same dimension. The Euclidean norm is denoted by $\Vert\cdot\Vert$ and the absolute value is denoted by $\vert \cdot \vert$. 
Gradient operators on Euclidean spaces are denoted by $\nabla$ with subscripts indicating the variable with which the derivative is taken with. In contrast, we use $D$ to represent the Fr\'echet derivative on Banach spaces. Namely, if $x\in U$ and $F:U\rightarrow V$ is a mapping between two Banach spaces $(U,\Vert\cdot \Vert_U)$ and $(V,\Vert \cdot \Vert_V)$, then $DF(x)$ is defined by the linear operator $DF(x):U \rightarrow V$ s.t. 
\begin{align}
  \label{eq:frechet_deriv_defn}
    r(x,y) \coloneqq \frac{\Vert F(x + y) - F(x) - DF(x)y \Vert_V}{\Vert y \Vert_U} 
    \rightarrow 0,                                                                
    \quad \text{as }                                                    
    \Vert y \Vert_U \rightarrow 0.                                        
\end{align}
For a matrix $A$, we use the symbol $A \preceq 0$ to mean that $A$ is negative semi-definite. 

Let the Banach space $L^\infty([0,T],E)$ be the set of essentially bounded measurable functions from $[0,T]$ to $E$, where $E$ is a subset of a Euclidean space with the usual Lebesgue measure. The norm is $\Vert \bfx \Vert_{L^\infty([0,T],E)} = \esssup_{t\in[0,T]} \Vert x(t) \Vert$, and we shall write for brevity $\Vert\cdot\Vert_{L^\infty}$ in place of $\Vert\cdot\Vert_{L^\infty([0,T],E)}$. In this paper, $E$ is often either $\Theta$ or $\R^d$, and the path-space variables we consider in this paper, such as the controls $\bftheta$, will mostly be defined in this space. 

As this paper introduces a mean-field optimal control approach, we also need some notation for the random variables and their distributions. We use the shorthand $L^2(\Omega, \R^{d+l})$ for $L^2((\Omega,\cF,\P), \R^{d+l})$, the set of $\R^{d+l}$-valued square integrable random variables. We equip this Hilbert space with the norm $\|X\|_{L^2}\coloneqq (\E \| X \| ^2)^{1/2}$ for $X\in L^2(\Omega,\R^{d+l})$. We denote by $\cP_2(\R^{d+l})$ the set of square integrable probability measures on the Euclidean space $\R^{d+l}$. Note that $X\in L^2(\Omega,\R^{d+l})$ if and only if $\P_X \in \cP_2(\R^{d+l})$.
The space $\cP_2(\R^{d+l})$ is regarded as a metric space equipped with the 2-Wasserstein distance
\begin{alignat*}{2}
  W_2(\mu,\nu)\coloneqq\inf\Big\{&   \Big(\int_{\R^{d+l}\times \R^{d+l}}\|w-z\|^2\pi(dw,dz)\Big)^{1/2}\,\Big|\, \\
  &  \pi\in\cP_2(\R^{d+l}\times \R^{d+l})\text{ with marginals } \mu \text{ and } \nu \Big\}\\
  \coloneqq\inf\Big\{&   \|X-Y\|_{L^2}\,\Big|\,X,Y\in L^2(\Omega,\R^{d+l}) \text{ with } \P_X=\mu,\,\P_Y=\nu \Big\}.
\end{alignat*}
For $\mu\in\cP_2(\R^{d+l})$, we also define $\|\mu\|_{L^2}\coloneqq (\int_{\R^{d+l}}\|w\|^2\mu(dw))^{1/2}$. 

Given a measurable function $\psi: \R^{d+l}\rightarrow \R^q$ that is square integrable with respect to $\mu$, we use the notation
\begin{equation*}
\langle\psi(.),\,\mu\rangle \coloneqq \int_{\R^{d+l}}\psi(w)\mu(dw).
\end{equation*}

Now, we introduce some notation for the dynamical evolution of probabilities. Given $\xi \in L^2(\Omega, \R^{d+l})$ and a control process $\bftheta \in L^\infty([0,T],\Theta)$, we consider the following dynamical system for $t\leq s\leq T$:
\begin{equation*}
W_s^{t,\xi,\bftheta}=\xi + \int_{t}^s \bar{f}(W_s^{t,\xi,\bftheta},\theta_t)\,ds.
\end{equation*}
Note that $W_s^{t,\xi,\bftheta}$ is always square integrable given $\bar{f}(w,\theta)$ is Lipschitz continuous with respect to $w$. Let $\mu = \P_{\xi}\in \cP_2(\R^{d+l})$, 
we denote the law of $W_s^{t,\xi,\bftheta}$ for simplicity by
\begin{equation*}
\P_s^{t,\mu,\bftheta}\coloneqq \P_{W_s^{t,\xi,\bftheta}}.
\end{equation*}
This is valid since the law of $W_s^{t,\xi,\bftheta}$ should only depend on the law of $\xi$ and not on the random variable itself. This notation also allow as to write down the flow or semi-group property of the dynamical system as
\begin{equation}
  \P_{s}^{t,\mu,\bftheta} = \P_s^{\hat{t},\P_{\hat{t}}^{t,\mu,\bftheta},\bftheta}, 
  \label{eq:flow_property}
\end{equation}
for all $0\leq t\leq \hat{t} \leq s \leq T,\,\mu\in \cP_2(\R^{d+l}),\,\bftheta \in L^\infty([0,T],\Theta)$.

Finally, throughout the results and proofs, we will use $K$ or $C$ with subscripts as names for generic constants, whose values may change from line to line when there is no need for them to be distinct. In general, these constants may implicitly depend on $T$ and the ambient dimensions $d,m$, but for brevity we omit them in the rest of the paper. 

\section{Mean-field dynamic programming principle and HJB equation}
\label{sec:HJB_derivation} 

We begin our analysis of~\eqref{eq:mean_field_ctrl_prob} by employing the dynamic programming principle and the Hamilton-Jacobi-Bellman formalism. In this approach, the key idea is to define a value function that corresponds to the optimal loss of the control problem~\eqref{eq:mean_field_ctrl_prob}, but under a general starting time and starting state. One can then derive a partial differential equation (Hamilton-Jacobi-Bellman equation, or HJB equation) to be satisfied by such a value function, which characterizes both the optimal loss function value and the optimal control policy of the original control problem. Compared to the classical optimal control case corresponding to empirical risk minimization in learning, here the value function's state argument is no longer a finite-dimensional vector, but an infinite-dimensional object corresponding to the joint distribution of the input-target pair. We shall interpret it as an element of a suitable Wasserstein space. The detailed mathematical definition of this value function and its basic properties are discussed in Subsec.~\ref{sec:value_continuity}.

In the finite-dimensional case, the HJB equation is a classical partial differential equation. In contrast, since the state variables we are dealing with are probability measures rather than Euclidean vectors, we need a concept of derivative with respect to a probability measure, as introduced by Lions in his course at Coll\`ege de France~\cite{lions2012cours}. We give a brief introduction of this concept in Subsec.~\ref{sec:derivative_in_wasserstein} and refer readers to the lecture notes~\cite{cardaliaguet2010notes} for more details. We then present the resulting infinite-dimensional HJB equation in Subsec.~\ref{sec:HJB_wasserstein}. 

Throughout this section and next section (Sec.~\ref{sec:HJB_vicosity}), we assume
\begin{itemize}
\item[(A1)] $f,L,\Phi$ are bounded; $f,L,\Phi$ are Lipschitz continuous with respect to $x$, and the Lipschitz constants of $f$ and $L$ are independent of $\theta$.
\item[(A2)] $\mu_0\in\cP_2(\R^{d+l})$.
\end{itemize}

\subsection{Value function and its properties}
\label{sec:value_continuity}

Adopting the viewpoint of taking probability measures $\mu\in \cP_2(\R^{d+l})$ as state variables, we can define a time-dependent objective functional
\begin{align}
  J(t,\mu,\bftheta) &\coloneqq~
  \E_{(x_t,y_0)\sim \mu} \left[    
    \Phi(x_T, y_0)+ \int_{t}^{T} L(x_t, \theta_t) dt \right]
    \text{ (subject to~\eqref{eq:cts_forward})} \nonumber \\
  &=~\langle \bar{\Phi}(.), \,\P_{T}^{t,\mu,\bftheta} \rangle + \int_{t}^T \langle \bar{L}(.,\theta_s), \,\P_{s}^{t,\mu,\bftheta} \rangle\,ds.
\label{eq:time-mu objective}
\end{align}
The second line in the above is just a rewriting of the first line based on the notation introduced earlier. Here, we abuse the notation $J$ in~\eqref{eq:mean_field_ctrl_prob} for the new objective functional, which now has additional arguments $t,\mu$. Of course, $J(\bftheta)$ in~\eqref{eq:mean_field_ctrl_prob} corresponds to $J(0,\mu_0,\bftheta)$ in~\eqref{eq:time-mu objective}.

The \emph{value function} $v^*(t,\mu)$ is defined as a real-valued function on $[0,T]\times\cP_2(\R^{d+l})$ through
\begin{equation}
\label{eq:value_def}
v^*(t,\mu) = \inf_{\bftheta \in L^\infty([0,T],\Theta)} J(t,\mu,\bftheta).
\end{equation}
If we assume $\bftheta^*$ attains the infimum in \eqref{eq:mean_field_ctrl_prob}, then by definition
\begin{equation*}
  J(\bftheta^*)=v^*(0,\mu_0).
\end{equation*}
The following proposition shows the continuity of the value function.

\begin{proposition}
\label{prop:value_continuity}
The function $(t,\mu)\mapsto J(t,\mu,\bftheta)$ is Lipschitz continuous on $[0,T]\times \cP_2(\R^{d+l})$, uniformly with respect to $\bftheta\in L^\infty([0,T],\Theta)$, and the value function $v^*(t,\mu)$ is Lipschitz continuous on $[0,T]\times \cP_2(\R^{d+l})$.
\end{proposition}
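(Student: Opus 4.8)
The plan is to first establish that $J(\cdot,\cdot,\bftheta)$ is Lipschitz in $(t,\mu)$ with a constant independent of $\bftheta$, and then deduce the Lipschitz continuity of $v^*$ almost for free: the infimum over $\bftheta$ of a family of functions sharing a common Lipschitz constant $C$ is again Lipschitz with constant $C$, since $v^*(t,\mu) \le \inf_{\bftheta}[J(t',\nu,\bftheta) + C(|t-t'| + W_2(\mu,\nu))] = v^*(t',\nu) + C(|t-t'| + W_2(\mu,\nu))$ and symmetrically. So the core task is the uniform estimate on $J$.

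The backbone is a stability estimate for the flow with respect to its initial distribution. Fixing a starting time $t$ and two laws $\mu = \P_\xi$, $\nu = \P_\zeta$, I would take a coupling (nearly) realizing $W_2(\mu,\nu) = \|\xi-\zeta\|_{L^2}$, run the trajectories $W_s^{t,\xi,\bftheta}$ and $W_s^{t,\zeta,\bftheta}$ under the same control, subtract their integral equations, and apply Gronwall together with the Lipschitz continuity of $\bar{f}$ in $w$ from (A1) to get $\|W_s^{t,\xi,\bftheta} - W_s^{t,\zeta,\bftheta}\|_{L^2} \le e^{K(s-t)}\|\xi-\zeta\|_{L^2}$. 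Since the joint law of the coupled pair is an admissible coupling, this gives $W_2(\P_s^{t,\mu,\bftheta},\P_s^{t,\nu,\bftheta}) \le e^{KT} W_2(\mu,\nu)$, uniformly in $s$ and $\bftheta$. Combined with the Lipschitz continuity of $\bar{\Phi}$ and $\bar{L}$ and the elementary bound $|\langle\psi(.),\mu\rangle - \langle\psi(.),\nu\rangle| \le \mathrm{Lip}(\psi)\,W_2(\mu,\nu)$ (via the same coupling), this controls $|J(t,\mu,\bftheta) - J(t,\nu,\bftheta)|$ by $C\,W_2(\mu,\nu)$, i.e.\ Lipschitz continuity in the measure argument.

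For the time argument, fixing $\mu$ and taking $t<t'$, the main device is the flow property~\eqref{eq:flow_property}, which lets me write $\P_s^{t,\mu,\bftheta} = \P_s^{t',\P_{t'}^{t,\mu,\bftheta},\bftheta}$ for $s \ge t'$ and thereby reduce a difference of starting times to a difference of starting measures at the common time $t'$: by the stability estimate above, $W_2(\P_s^{t,\mu,\bftheta},\P_s^{t',\mu,\bftheta}) \le e^{KT} W_2(\P_{t'}^{t,\mu,\bftheta},\mu)$. The residual quantity $W_2(\P_{t'}^{t,\mu,\bftheta},\mu)$ measures how far the flow moves over $[t,t']$, and boundedness of $\bar{f}$ from (A1) gives $\|W_{t'}^{t,\xi,\bftheta}-\xi\|_{L^2} \le \|\bar{f}\|_{L^\infty}(t'-t)$, so it is $O(t'-t)$. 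Feeding this into the terminal term and into the part of the running integral over $[t',T]$, and bounding the leftover sliver $\int_t^{t'}\langle \bar{L}(.,\theta_s),\P_s^{t,\mu,\bftheta}\rangle\,ds$ by $\|\bar{L}\|_{L^\infty}(t'-t)$, yields $|J(t,\mu,\bftheta)-J(t',\mu,\bftheta)| \le C(t'-t)$, uniformly in $\mu,\bftheta$. A triangle inequality then gives the joint estimate $|J(t,\mu,\bftheta)-J(t',\nu,\bftheta)| \le C(|t-t'| + W_2(\mu,\nu))$.

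I expect the only genuinely delicate step to be the time-continuity estimate, specifically the bookkeeping needed to exploit~\eqref{eq:flow_property}: one must check that the two flows use the same control on the overlapping interval and that the constants $e^{KT}$, $\|\bar{f}\|_{L^\infty}$, $\|\bar{L}\|_{L^\infty}$ are genuinely independent of $\bftheta$, which is exactly where the hypotheses in (A1) that $f,L,\Phi$ are bounded and that the Lipschitz constants of $f,L$ do not depend on $\theta$ are used. Everything else is a routine consequence of Gronwall's inequality and the coupling characterization of $W_2$ recorded in the notation section.
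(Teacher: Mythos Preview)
Your proposal is correct and follows essentially the same approach as the paper: Gronwall plus the coupling characterization of $W_2$ to control the flow in the measure argument, boundedness of $\bar{f}$ for the short-time displacement together with the flow property~\eqref{eq:flow_property} to handle the time argument, and then passing the uniform Lipschitz constant through the infimum to obtain the Lipschitz continuity of $v^*$. The only cosmetic difference is that the paper merges the time and measure estimates into a single bound $W_2(\P_s^{t,\mu,\bftheta},\P_s^{\hat{t},\hat{\mu},\bftheta}) \le K_L(|t-\hat{t}| + W_2(\mu,\hat{\mu}))$ before plugging into $J$, whereas you treat the two arguments separately and combine via the triangle inequality.
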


\begin{proof}
  We first establish some elementary estimates based on the assumptions. We {}suppose
  \begin{equation}
    \langle \bar{L}(.,\theta),\,\mu\rangle \leq C.
    \label{eq:L_boundedness}
  \end{equation}
  Let $X,Y\in L_2(\Omega,\R^{d+l})$ such that $\P_X=\mu, \P_Y=\hat{\mu}$, the Lipschitz continuity of $\bar{L}$ gives us
  \begin{align*}
    |\langle \bar{L}(.,\theta),\,\mu\rangle - \langle \bar{L}(.,\theta),\,\hat{\mu}\rangle| 
    = |\E[\bar{L}(X,\theta)-\bar{L}(Y,\theta)]|
    \leq K_L \|X-Y\|_{L^2}.
  \end{align*}
  Note that in the proceeding inequality the left side does not depend on the choice of $X,Y$ while the right side does. Hence we can take the infimum over all the joint choices of $X,Y$ to get
  \begin{align}
    &|\langle \bar{L}(.,\theta),\,\mu\rangle - \langle \bar{L}(.,\theta),\,\hat{\mu}\rangle| \nonumber \\
    \leq &K_L\times \inf\Big\{\|X-Y\|_{L^2}\,\Big|\,X,Y\in L^2(\Omega,\R^{d+l}) \text{ with } \P_X=\mu,\,\P_Y=\nu \Big\} \nonumber \\
    \leq &K_L W_2(\mu,\hat{\mu}).
    \label{eq:L_continuity}
  \end{align}
  The same argument applied to $\bar{\Phi}$ gives us
  \begin{equation}
    |\langle \bar{\Phi}(.),\,\mu\rangle - \langle \bar{\Phi}(.),\,\hat{\mu}\rangle| \leq K_LW_2(\mu, \hat{\mu}).
    \label{eq:Phi_continuity}
  \end{equation}
  For the deterministic ODE
  \begin{equation*}
    \frac{dw_t^{\bftheta}}{dt}=\bar{f}(w_t^{\bftheta},\theta_t), 
    \quad w_0^{\bftheta} = w_0,
  \end{equation*}
  define the induced flow map as
  \begin{equation*}
    h(t,w_0,\bftheta) \coloneqq w_t^{\bftheta}.
  \end{equation*}
  Using Gronwall's inequality with the boundedness and Lipschitz continuity of $\bar{f}$, we know
  \begin{align*}
    &|h(t,w,\bftheta) - h(t,\hat{w},\bftheta)| \leq K_L\|w-\hat{w}\|, \\
    &|h(t,w,\bftheta) - h(\hat{t},w,\bftheta)| \leq K_L|t-\hat{t}|.
  \end{align*}
  Therefore we use the definition of Wasserstein distance to obtain
  \begin{align}
    &W_2(\P_s^{t,\mu, \bftheta}, \P_s^{t,\hat{\mu}, \bftheta}) \nonumber \\
    =\,& \inf\Big\{\|X-Y\|_{L^2}\,\Big|\,X,Y\in L^2(\Omega,\R^{d+l}) \text{ with } \P_X=\P_s^{t,\mu, \bftheta},\,\P_Y=\P_s^{t,\hat{\mu}, \bftheta} \Big\} \nonumber \\
    =\,& \inf\Big\{\|h(s-t,X,\bftheta)-h(s-t,Y,\bftheta)\|_{L^2}\,\Big|\, \nonumber \\
    &\hphantom{\inf\Big\{} X,Y\in L^2(\Omega,\R^{d+l}) \text{ with } \P_X=\mu,\,\P_Y=\hat{\mu} \Big\} \nonumber \\
    \leq \,& \inf\Big\{K_L\|X-Y\|_{L^2}\,\Big|\,X,Y\in L^2(\Omega,\R^{d+l}) \text{ with } \P_X=\mu,\,\P_Y=\hat{\mu} \Big\} \nonumber \\
    =\,&  K_LW_2(\mu,\hat{\mu})   \label{eq:W2_continuity1}
  \end{align}
  and similarly
  \begin{align}
    \label{eq:W2_continuity2}
    W_2(\P_s^{t,\mu, \bftheta}, \mu) &\leq K_L |s-t|.
  \end{align}
  The flow property~\eqref{eq:flow_property} and estimates~\eqref{eq:W2_continuity1}, \eqref{eq:W2_continuity2} together give us
  \begin{align}
    W_2(\P_s^{t,\mu,\bftheta},\P_s^{\hat{t},\hat{\mu},\bftheta}) 
    &= W_2(\P_s^{\hat{t},\P_{\hat{t}}^{t,\mu,\bftheta},\bftheta},\P_s^{\hat{t},\hat{\mu},\bftheta})  \nonumber \\
    & \leq K_LW_2(\P_{\hat{t}}^{t,\mu,\bftheta},\hat{\mu}) \nonumber \\
    & \leq K_L(|t-\hat{t}| + W_2(\mu,\hat{\mu})).
    \label{eq:W2_continuity3}
  \end{align}
  Now for all $0\leq t\leq \hat{t}\leq T$, $\mu,\hat{\mu}\in\cP_2(\R^{d+l})$, $\bftheta\in L^\infty([0,T],\Theta)$, we employ~\eqref{eq:L_boundedness},~\eqref{eq:L_continuity},~\eqref{eq:Phi_continuity}, and~\eqref{eq:W2_continuity3} to obtain
  \begin{align*}
    \,&|J(t,\mu,\bftheta)-J(\hat{t},\hat{\mu},\bftheta)| \\
    \leq\, & \int_{t}^{\hat{t}} |\langle \bar{L}(.,\theta_s), \,\P_{s}^{t,\mu,\bftheta} \rangle|\,ds + 
    \int_{\hat{t}}^{T} |\langle \bar{L}(.,\theta_s), \,\P_{s}^{t,\mu,\bftheta} \rangle - \langle \bar{L}(.,\theta_s), \,\P_{s}^{\hat{t},\hat{\mu},\bftheta} \rangle|\,ds  \\
    \,&+ |\langle \bar{\Phi}(.), \,\P_{T}^{t,\mu,\bftheta} \rangle - \langle \bar{\Phi}(.), \,\P_{T}^{\hat{t},\hat{\mu},\bftheta} \rangle| \\
    \leq\, & C|\hat{t}-t| + K_L\sup_{\hat{t}\leq s\leq T}W_2(\P_s^{t,\mu,\bftheta},\P_s^{\hat{t},\hat{\mu},\bftheta}) \\
    \leq\, & K_L(|t-\hat{t}| + W_2(\mu,\hat{\mu})),
  \end{align*} 
  which gives us the desired Lipschitz continuity property.

  Finally, combining the fact that
  \begin{eqnarray*}
    &|v^*(t,\mu)-v^*(\hat{t},\hat{\mu})|\leq 
    \sup_{\bftheta\in L^\infty([0,T],\Theta)}|J(t,\mu,\bftheta)-J(\hat{t},\hat{\mu},\bftheta)|, \\
    &\forall~t,\hat{t}\in[0,T], \,\mu,\hat{\mu}\in \cP_2(\R^{d+l}),
  \end{eqnarray*}
  and $J(t,\mu,\bftheta)$ is Lipschitz continuous at $(t,\mu)\in[0,T]\times \cP_2(\R^{d+l})$, uniformly with respect to $\bftheta\in L^\infty([0,T],\Theta)$, we deduce that the value function $v^*(t,\mu)$ is Lipschitz continuous on $[0,T]\times \cP_2(\R^{d+l})$.
\end{proof}

The important observation we now make is that the value function satisfies a recursive relation. This is known as the \emph{dynamic programming principle}, which forms the basis of deriving the Hamilton-Jacobi-Bellman equation. 
Intuitively, the dynamic programming principle states that for any optimal trajectory, starting from any intermediate state in the trajectory, the remaining trajectory must again be optimal, starting from that time and state. We now state and prove this intuitive statement precisely. 

\begin{proposition}{(Dynamic programming principle)}
  \label{prop:DPP}
  For all $0\leq t \leq \hat{t} \leq T$, $\mu\in \cP_2(\R^{d+l})$, we have
  \begin{equation}
    \label{eq:DPP_v}
    v^*(t,\mu) = \inf_{\bftheta\in L^\infty([0,T],\Theta)}\Big[ \int_{t}^{\hat{t}}\langle \bar{L}(.,\theta_s), \,\P_{s}^{t,\mu,\bftheta} \rangle\,ds + v^*(\hat{t}, \P_{\hat{t}}^{t,\mu,\bftheta})\Big].
  \end{equation}
\end{proposition}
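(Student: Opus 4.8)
The plan is to establish the two inequalities that together yield the equality in~\eqref{eq:DPP_v}. The engine of both directions is a decomposition of the objective functional~\eqref{eq:time-mu objective} across the intermediate time $\hat{t}$. Fixing $\bftheta \in L^\infty([0,T],\Theta)$, I would split the running-cost integral defining $J(t,\mu,\bftheta)$ at $\hat{t}$, and for the tail contribution (the terminal loss together with the running loss over $[\hat{t},T]$) invoke the flow property~\eqref{eq:flow_property} to rewrite $\P_s^{t,\mu,\bftheta} = \P_s^{\hat{t},\P_{\hat{t}}^{t,\mu,\bftheta},\bftheta}$ for every $s\in[\hat{t},T]$. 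Recognizing the resulting expression as the objective functional started at time $\hat{t}$ from the intermediate law $\P_{\hat{t}}^{t,\mu,\bftheta}$, this produces the identity
\begin{equation*}
  J(t,\mu,\bftheta) = \int_{t}^{\hat{t}}\langle \bar{L}(.,\theta_s), \,\P_{s}^{t,\mu,\bftheta} \rangle\,ds + J(\hat{t}, \P_{\hat{t}}^{t,\mu,\bftheta},\bftheta),
\end{equation*}
valid for every admissible control.

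The inequality $\geq$ in~\eqref{eq:DPP_v} then follows directly: since $J(\hat{t},\P_{\hat{t}}^{t,\mu,\bftheta},\bftheta) \geq v^*(\hat{t},\P_{\hat{t}}^{t,\mu,\bftheta})$ by the definition~\eqref{eq:value_def} of the value function, the decomposition shows that $J(t,\mu,\bftheta)$ dominates the bracketed quantity in~\eqref{eq:DPP_v} for each $\bftheta$, and taking the infimum over $\bftheta$ on both sides gives the claim. The reverse inequality $\leq$ is the substantive direction and requires a concatenation construction. Given an arbitrary control $\bftheta^{1}$ defined on $[t,\hat{t}]$, set $\nu \coloneqq \P_{\hat{t}}^{t,\mu,\bftheta^{1}}$. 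For any $\veps>0$ I would select a near-optimal control $\bftheta^{2}$ for the problem started at $(\hat{t},\nu)$, so that $J(\hat{t},\nu,\bftheta^{2}) \leq v^*(\hat{t},\nu) + \veps$, and then define the spliced control $\bftheta$ equal to $\bftheta^{1}$ on $[t,\hat{t}]$ and $\bftheta^{2}$ on $(\hat{t},T]$. Because the flow over $[t,\hat{t}]$ depends only on the values of the control on that subinterval, one has $\P_{\hat{t}}^{t,\mu,\bftheta}=\nu$, and by the same causal structure $J(\hat{t},\nu,\bftheta)=J(\hat{t},\nu,\bftheta^{2})$. Applying the decomposition to this $\bftheta$ and using $v^*(t,\mu)\leq J(t,\mu,\bftheta)$ then bounds $v^*(t,\mu)$ by the running cost of $\bftheta^{1}$ over $[t,\hat{t}]$ plus $v^*(\hat{t},\nu)+\veps$; letting $\veps\to0$ and taking the infimum over $\bftheta^{1}$ closes the gap.

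The main obstacle is the rigorous justification of the concatenation step. I must verify that the spliced control is genuinely admissible, i.e.\ lies in $L^\infty([0,T],\Theta)$ (which is immediate, since both pieces take values in $\Theta$ and the concatenation of essentially bounded measurable functions is again essentially bounded and measurable), and, more importantly, that the objective functional is \emph{causal} in the sense that $J(\hat{t},\nu,\cdot)$ depends on the control only through its restriction to $[\hat{t},T]$. This causality is precisely what the flow property~\eqref{eq:flow_property} encodes, since for $s\geq\hat{t}$ the law $\P_s^{\hat{t},\nu,\bftheta}$ is determined by $\nu$ and $\{\theta_r : \hat{t}\leq r\leq s\}$ alone. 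A secondary technical point, the measurable-selection flavor of choosing $\bftheta^{2}$, is sidestepped by working with $\veps$-optimal controls rather than exact minimizers, so that no selection theorem is invoked.
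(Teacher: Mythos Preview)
Your proposal is correct and follows essentially the same approach as the paper: both directions rest on the decomposition of $J(t,\mu,\bftheta)$ at $\hat t$ via the flow property~\eqref{eq:flow_property}, with the $\leq$ direction handled by concatenating an arbitrary control on $[t,\hat t]$ with an $\veps$-optimal control on $[\hat t,T]$, and the $\geq$ direction by bounding the tail $J(\hat t,\P_{\hat t}^{t,\mu,\bftheta},\bftheta)$ below by the value function. Your treatment of the $\geq$ direction is slightly cleaner (you take the infimum directly rather than passing through an $\veps$-optimal control as the paper does), but the argument is otherwise identical.
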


\begin{proof}
  The proof is elementary as in the context of deterministic control problem. We provide it as follows for completeness.

  1). Given fixed $t,\hat{t},\mu$ and any $\bftheta^1 \in L^\infty([0,T],\Theta)$, we consider the probability measure $\P_{\hat{t}}^{t,\mu,\bftheta^1}$. Fix $\veps>0$ and by definition of value function~\eqref{eq:value_def} we can pick $\bftheta^2 \in L^\infty([0,T],\Theta)$ satisfying
  \begin{equation}
    v^*(\hat{t},\P_{\hat{t}}^{t,\mu,\bftheta^1}) + \veps \geq \langle \bar{\Phi}(.), \,\P_{T}^{\hat{t},\P_{\hat{t}}^{t,\mu,\bftheta^1},\bftheta^2} \rangle + \int_{\hat{t}}^T \langle \bar{L}(.,\theta^2_s), \, \P_{s}^{\hat{t},\P_{\hat{t}}^{t,\mu,\bftheta^1},\bftheta^2} \rangle\,ds.
    \label{eq:DPP_proof_inequal1}
  \end{equation}
  Now consider the control process $\hat{\bftheta}$ defined as
  \begin{equation*}
    \hat{\theta}_s = \mathbf{1}_{\{s < \hat{t}\}}\theta^1_s + 
    \mathbf{1}_{\{s\geq \hat{t}\}}\theta^2_s. 
  \end{equation*}
  Thus we can use~\eqref{eq:DPP_proof_inequal1} and flow property~\eqref{eq:flow_property} to deduce
  \begin{align*}
    &~v^*(t,\mu) \\
    \leq& \int_{t}^{T}\langle \bar{L}(.,\hat{\theta}_s), \,\P_{s}^{t,\mu,\hat{\bftheta}} \rangle\,ds 
    + \langle \bar{\Phi}(.), \,\P_{T}^{t,\mu,\hat{\bftheta}} \rangle \\
    = & \int_{t}^{\hat{t}}\langle \bar{L}(.,\hat{\theta}_s), \,\P_{s}^{t,\mu,\hat{\bftheta}} \rangle\,ds
    + \int_{\hat{t}}^{T}\langle \bar{L}(.,\hat{\theta}_s), \,\P_{s}^{t,\mu,\hat{\bftheta}} \rangle\,ds
    + \langle \bar{\Phi}(.), \,\P_{T}^{t,\mu,\hat{\bftheta}} \rangle \\
    = & \int_{t}^{\hat{t}}\langle \bar{L}(.,\hat{\theta}_s), \,\P_{s}^{t,\mu,\hat{\bftheta}} \rangle\,ds
    + \int_{\hat{t}}^{T}\langle \bar{L}(.,\theta^2_s), \, \P_{s}^{\hat{t},\P_{\hat{t}}^{t,\mu,\bftheta^1},\bftheta^2} \rangle\,ds
    + \langle \bar{\Phi}(.), \,\P_{T}^{\hat{t},\P_{\hat{t}}^{t,\mu,\bftheta^1},\bftheta^2} \rangle \\
    \leq & \int_{t}^{\hat{t}}\langle \bar{L}(.,\hat{\theta}_s), \,\P_{s}^{t,\mu,\hat{\bftheta}} \rangle\,ds 
    + v^*(\hat{t},\P_{\hat{t}}^{t,\mu,\bftheta^1}) + \veps \\
    = & \int_{t}^{\hat{t}}\langle \bar{L}(.,\theta^1_s), \,\P_{s}^{t,\mu,\bftheta^1} \rangle\,ds 
    + v^*(\hat{t},\P_{\hat{t}}^{t,\mu,\bftheta^1}) + \veps.
  \end{align*}
  As $\bftheta^1$ and $\veps$ are both arbitrary, we have
  \begin{equation*}
    v^*(t,\mu) \leq \inf_{\bftheta\in L^\infty([0,T],\Theta)}\Big[ \int_{t}^{\hat{t}}\langle \bar{L}(.,\theta_s), \,\P_{s}^{t,\mu,\bftheta} \rangle\,ds + v^*(\hat{t}, \P_{\hat{t}}^{t,\mu,\bftheta})\Big].
  \end{equation*}

  2). Fix $\veps>0$ again and we choose by definition $\bftheta^3 \in L^\infty([0,T],\Theta)$ such that 
  \begin{equation*}
    v^*(t,\mu) + \veps \geq \int_{t}^{T}\langle \bar{L}(.,\theta_s), \,\P_{s}^{t,\mu,\bftheta^3} \rangle\,ds + \langle \bar{\Phi}(.), \,\P_{T}^{t,\mu, \bftheta^3} \rangle.
  \end{equation*}
  Using the flow property~\eqref{eq:flow_property} and the definition of the value function again gives us the estimate
  \begin{align*}
    &~v^*(t,\mu) + \veps \\
    \geq & \int_{t}^{T}\langle \bar{L}(.,\theta^3_s), \,\P_{s}^{t,\mu,\bftheta^3} \rangle\,ds + \langle \bar{\Phi}(.), \,\P_{T}^{t,\mu, \bftheta^3} \rangle \\
    = & \int_{t}^{\hat{t}}\langle \bar{L}(.,\theta^3_s), \,\P_{s}^{t,\mu,\bftheta^3} \rangle\,ds 
    + \int_{\hat{t}}^{T}\langle \bar{L}(.,\theta^3_s), \,\P_{s}^{\hat{t},\P_{\hat{t}}^{t,\mu,\bftheta^3},\bftheta^3} \rangle\,ds
    + \langle \bar{\Phi}(.), \,\P_{T}^{\hat{t},\P_{\hat{t}}^{t,\mu,\bftheta^3},\bftheta^3} \rangle \\
    \geq & \int_{t}^{\hat{t}}\langle \bar{L}(.,\theta^3_s), \,\P_{s}^{t,\mu,\bftheta^3} \rangle\,ds 
    + v^*(\hat{t}, \P_{\hat{t}}^{t,\mu,\bftheta^3}) \\
    \geq & \inf_{\bftheta\in L^\infty([0,T],\Theta)}\Big[ \int_{t}^{\hat{t}}\langle \bar{L}(.,\theta_s), \,\P_{s}^{t,\mu,\bftheta} \rangle\,ds + v^*(\hat{t}, \P_{\hat{t}}^{t,\mu,\bftheta})\Big].
  \end{align*}
  Hence we deduce
  \begin{equation*}
    v^*(t,\mu) \geq \inf_{\bftheta\in L^\infty([0,T],\Theta)}\Big[ \int_{t}^{\hat{t}}\langle \bar{L}(.,\theta_s), \,\P_{s}^{t,\mu,\bftheta} \rangle\,ds + v^*(\hat{t}, \P_{\hat{t}}^{t,\mu,\bftheta})\Big].
  \end{equation*}
  Combining the inequalities in the two parts completes the proof.
\end{proof}

\subsection{Derivative and Chain Rule in Wasserstein Space}
\label{sec:derivative_in_wasserstein}
In classical finite-dimensional optimal control, the HJB equation can be formally derived from the dynamic programming principle by a Taylor expansion of the value function with respect to the state vector. However, in the current formulation, the state is now a probability measure. To derive the corresponding HJB equation in this setting, it is essential to define a notion of derivative of the value function with respect to a probability measure. The basic idea to achieve this is to take probability measures on $\R^{d+l}$ as laws of $\R^{d+l}$-valued random variables on the probability space $(\Omega,\cF,\P)$ and then use the corresponding Banach space of random variables to define derivatives. This approach is more extensively outlined in~\cite{cardaliaguet2010notes}. 

Concretely, let us take any function $u: \cP_2(\R^{d+l})\rightarrow \R$. We now lift it into its ``extension'' $U$, a function defined on $L^2(\Omega,\R^{d+l})$ by
\begin{equation}
U(X)=u(\P_X),\quad \forall X\in L^2(\Omega,\R^{d+l}).
\end{equation}
We say $u$ is $C^1(\cP_2(\R^{d+l}))$ if the lifted function $U$ is Fr\'echet differentiable with continuous derivatives.
Since we can identify $L^2(\Omega,\R^{d+l})$ with its dual space, if the Fr\'echet derivative $DU(X)$ exists, by Riesz' theorem one can view it as an element of $L^2(\Omega,\R^{d+l})$:
\begin{equation*}
DU(X)(Y)=\E[DU(X)\cdot Y],\quad \forall Y\in L^2(\Omega,\R^{d+l}).
\end{equation*}
The important result one can prove is that the law of $DU(X)$ does not depend on $X$ but only on the law of $X$. Accordingly we have the representation
\begin{equation*}
  DU(X) = \partial_\mu u(\P_X)(X),
\end{equation*}
for some function $\partial_\mu u(\P_X): \R^{d+l}\rightarrow \R^{d+l}$, which is called derivative of $u$ at $\mu = \P_X$. 
Moreover, we know $\partial_\mu u(\mu)$ is square integrable with respect to $\mu$.

We next need a chain rule defined on $\cP_2(\R^{d+l})$. Consider the dynamical system
\begin{equation*}
W_t=\xi + \int_{0}^t \bar{f}(W_s)\,ds,\quad \xi\in L^2(\Omega,\R^{d+l}),
\end{equation*}
and $u\in\mathcal{C}^1(\cP_2(\R^{d+l}))$. Then, for all $t\in[0,T]$, we have
\begin{equation}
\label{eq:funcito}
u(\P_{W_t})=u(\P_{W_0})+\int_0^t\langle \partial_{\mu} u(\P_{W_s})(.)\cdot \bar{f}(.),\,\P_{W_s}\rangle \,ds,
\end{equation}
or equivalently its lifted version
\begin{equation}
\label{eq:funcito_lift}
U({W_t})=U({W_0})+\int_0^t \E[DU(W_s)\cdot \bar{f}(W_s)] \,ds.
\end{equation}


\subsection{HJB equation in Wasserstein Space}
\label{sec:HJB_wasserstein}
Guided by the dynamic programming principle~\eqref{eq:DPP_v} and formula~\eqref{eq:funcito}, we are ready to formally derive the associated HJB equation as follows. Let $\hat{t}=t+\delta t$ with $\delta t$ being small. By performing a formal Taylor series expansion of~\eqref{eq:DPP_v}, we have
\begin{align*}
  0 & = \inf_{\bftheta\in L^\infty([0,T],\Theta)}\Big[ v^*(t+\delta t, \P_{t+\delta t}^{t,\mu,\bftheta}) - v^*(t,\mu) + \int_{t}^{t+\delta t}\langle \bar{L}(.,\theta_s), \,\P_{s}^{t,\mu,\bftheta} \rangle\,ds\Big] \\
  & \approx \inf_{\bftheta\in L^\infty([0,T],\Theta)}\Big[ \partial_t v(t,\mu)\delta t + \int_{t}^{t+\delta t}\langle \partial_\mu v(t,\mu)(.)\cdot \bar{f}(.,\theta) + \bar{L}(.,\theta_s), \,\mu \rangle\,ds\Big] \\
  & \approx \delta t \inf_{\bftheta\in L^\infty([0,T],\Theta)}\Big[ \partial_t v(t,\mu) + \langle \partial_\mu v(t,\mu)(.)\cdot \bar{f}(.,\theta) + \bar{L}(.,\theta_s), \,\mu \rangle\Big].
\end{align*}
Passing to the limit $\delta t\rightarrow 0$, we obtain the following HJB equation
\begin{equation}
\label{eq:HJB_dist}
\begin{cases}
\displaystyle{\frac{\partial v}{\partial t} + \inf_{\theta\in\Theta}\left\langle \partial_\mu v(t,\mu)(.)\cdot \bar{f}(.,\theta)+ \bar{L}(.,\theta),\,\mu\right\rangle  = 0,}&\text{on~~} [0,T) \times \cP_2(\R^{d+l}),\\
\displaystyle{v(T, \mu)=\langle\bar{\Phi}(.),\mu\rangle}, &\text{on~~} \cP_2(\R^{d+l}),
\end{cases}
\end{equation}
which the value function should satisfy.
The rest of this and the next section is to establish the precise link between equation~\eqref{eq:HJB_dist} and the value function~\eqref{eq:value_def}. We now prove a verification result, which essentially says that if we have a smooth enough solution of the HJB equation~\eqref{eq:HJB_dist}, then this solution must be the value function. Moreover, the HJB allows us to identify the optimal control policy. 

\begin{proposition}
\label{prop:HJB_verification}
Let $v$ be a function in $C^{1,1}([0,T]\times \cP_2(\R^{d+l}))$. If $v$ is a solution to~\eqref{eq:HJB_dist} and there exists $\theta^\dagger(t,\mu)$, which is a mapping $(t,\mu) \mapsto \Theta$ attaining the infimum in~\eqref{eq:HJB_dist}, then $v(t,\mu)=v^*(t,\mu)$, and $\theta^\dagger$ is an optimal feedback control policy, i.e. $\bftheta=\bftheta^*$ is a solution of~\eqref{eq:mean_field_ctrl_prob}, where
$\theta^*_t := \theta^\dagger(t,\P_{w^*_t})$ with $\P_{w^*_0}=\mu_0$ and $dw^*_t/dt=\bar{f}(w^*_t,\theta^*_t)$. \end{proposition}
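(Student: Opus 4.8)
The plan is to run the classical verification argument, adapted to the Wasserstein setting: I would establish a lower bound $v(t,\mu)\le J(t,\mu,\bftheta)$ valid for \emph{every} admissible $\bftheta$, and then show that the candidate feedback control turns this inequality into an equality. Both halves rest on applying the chain rule~\eqref{eq:funcito} (in its time-dependent form) along the flow of measures $s\mapsto\P_s^{t,\mu,\bftheta}$, and on reading off the sign of each integrand from the HJB equation~\eqref{eq:HJB_dist}.

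First I would fix $(t,\mu)$ and an arbitrary $\bftheta\in L^\infty([0,T],\Theta)$, write $\P_s\coloneqq\P_s^{t,\mu,\bftheta}$ for brevity, and apply the chain rule to $s\mapsto v(s,\P_s)$ on $[t,T]$ to obtain
\[
v(T,\P_T)-v(t,\mu)=\int_t^T\Big[\partial_t v(s,\P_s)+\langle\partial_\mu v(s,\P_s)(.)\cdot\bar{f}(.,\theta_s),\,\P_s\rangle\Big]\,ds,
\]
where $\partial_t v$ denotes the derivative of $v$ in its time slot. Since $v$ solves~\eqref{eq:HJB_dist}, the infimum over $\theta\in\Theta$ of $\partial_t v+\langle\partial_\mu v(.)\cdot\bar{f}(.,\theta)+\bar{L}(.,\theta),\,\cdot\rangle$ vanishes; in particular, for the choice $\theta=\theta_s$ the integrand is bounded below by $-\langle\bar{L}(.,\theta_s),\,\P_s\rangle$. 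Combining this with the terminal condition $v(T,\P_T)=\langle\bar{\Phi}(.),\,\P_T\rangle$ gives
\[
v(t,\mu)\le\langle\bar{\Phi}(.),\,\P_T\rangle+\int_t^T\langle\bar{L}(.,\theta_s),\,\P_s\rangle\,ds=J(t,\mu,\bftheta).
\]
Taking the infimum over $\bftheta$ yields $v(t,\mu)\le v^*(t,\mu)$ for all $(t,\mu)$.

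For the reverse inequality I would repeat the computation with the feedback control $\theta^*_s=\theta^\dagger(s,\P_{w^*_s})$ generated by the closed-loop ODE $dw^*_s/dt=\bar{f}(w^*_s,\theta^*_s)$ started from $\P_{w^*_t}=\mu$. Because $\theta^\dagger$ attains the infimum in~\eqref{eq:HJB_dist} at each pair $(s,\P_{w^*_s})$, every inequality in the previous step becomes an equality, so $v(t,\mu)=J(t,\mu,\bftheta^*)\ge v^*(t,\mu)$. Together with the lower bound this gives $v=v^*$, and shows $\bftheta^*$ is optimal starting from $(t,\mu)$; specializing to $(t,\mu)=(0,\mu_0)$ yields the claim that $\bftheta^*$ solves~\eqref{eq:mean_field_ctrl_prob}.

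The main obstacle is not the algebraic verification but the admissibility and well-posedness of the feedback control. One must ensure the closed-loop system admits a solution whose induced control $s\mapsto\theta^\dagger(s,\P_{w^*_s})$ actually lies in $L^\infty([0,T],\Theta)$ and is sufficiently regular for the chain rule to be applied along the resulting flow of laws; this is where enough measurability/regularity of the minimizer map $\theta^\dagger$ in $(t,\mu)$ is implicitly needed, and I would either assume it or note it as a standing hypothesis. A secondary technical point is justifying the time-dependent extension of~\eqref{eq:funcito} with the extra $\partial_t v$ term and a time-varying, $\theta_s$-dependent field $\bar{f}$; this follows from the $C^{1,1}$ regularity of $v$ together with an approximation of $\bftheta$ by piecewise-constant controls, on each of which~\eqref{eq:funcito} applies directly.
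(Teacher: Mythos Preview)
Your proposal is correct and follows essentially the same approach as the paper's own proof: apply the chain rule~\eqref{eq:funcito} along the flow $s\mapsto\P_s^{t,\mu,\bftheta}$, use the HJB infimum to bound the integrand and obtain $v(t,\mu)\le J(t,\mu,\bftheta)$, then specialize to the feedback control $\bftheta^*$ to turn the inequality into an equality. The paper's proof is slightly more terse and does not explicitly flag the admissibility and regularity issues for the closed-loop control that you (rightly) note as technical caveats.
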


\begin{proof}
  Given any control process $\bftheta$, one can apply formula~\eqref{eq:funcito} between $s=t$ and $s=T$ with explicit $t$ dependence and obtain
  \begin{equation*}
  v(T,\P_{T}^{t,\mu,\bftheta}) = v(t,\mu) + \int_{t}^T\frac{\partial v}{\partial t}(s,\P_{s}^{t,\mu,\bftheta}) + \langle\partial_{\mu}v(s,\P_{s}^{t,\mu,\bftheta})(.)\cdot \bar{f}(.;\theta_s),\,\P_{s}^{t,\mu,\bftheta}\rangle\,ds.
  \end{equation*}
  Equivalently, we have
  \begin{align}
  v(t,\mu) &=  v(T,\P_{T}^{t,\mu,\bftheta}) - \int_{t}^T\frac{\partial v}{\partial t}(s,\P_{s}^{t,\mu,\bftheta}) + \langle\partial_{\mu}v(s,\P_{s}^{t,\mu,\bftheta})(.)\cdot \bar{f}(.;\theta_s),\,\P_{s}^{t,\mu,\bftheta}\rangle\,ds\nonumber \\
  &\leq v(T,\P_{T}^{t,\mu,\bftheta}) + \int_{t}^T \langle \bar{L}(.,\theta_s), \,\P_{s}^{t,\mu,\bftheta} \rangle\,ds \nonumber \\
  &=\langle \bar{\Phi}(.),\,\P_{t}^{T,\mu,\bftheta} \rangle + \int_{t}^T \langle \bar{L}(.,\theta_s), \,\P_{s}^{t,\mu,\bftheta} \rangle\,ds \nonumber \\
  &=J(t,\mu,\bftheta),\nonumber
  \end{align}
  where the first inequality comes from the infimum condition in~\eqref{eq:HJB_dist}. Since the control process is arbitrary, we have
  \begin{equation}
  v(t,\mu)\leq v^*(t,\mu).
  \end{equation}
  Replacing the arbitrary control process with $\bftheta^*$ where $\theta^*_t = \theta^\dagger(t,\P_{s}^{t,\mu,\bftheta^*})$ is given by the optimal feedback control and repeating the above argument, noting that the inequality becomes equality since the infimum is attained, we have
  \begin{equation}
  v(t,\mu)=J(t,\mu,\bftheta^*)\geq v^*(t,\mu).
  \end{equation}
  Therefore we obtain $v(t,\mu)=v^*(t,\mu)$ and $\theta^\dagger$ defines an optimal feedback control policy.
\end{proof}

Prop.~\ref{prop:HJB_verification} is an important statement that links smooth solutions of the HJB equation with solutions of the mean-field optimal control problem, and hence the population minimization problem in deep learning. 
Furthermore, by taking the infimum in~\eqref{eq:HJB_dist}, it allows us to identify an optimal control policy $\theta^\dagger:[0,T]\times \cP_2(\R^{d+l}) \rightarrow \Theta$. This is in general a stronger characterization of the solution of the learning problem. In particular, it is of \emph{feedback}, or \emph{closed-loop} form. On the other hand, an \emph{open-loop} solution can be obtained from the closed-loop control policy by sequentially setting $\theta^*_t = \theta^\dagger(t, \P_{w^*_t})$, where $w^*_t$ is the solution of the feed-forward ODE with $\bftheta=\bftheta^*$ up to time $t$. Note that in usual deep learning, the open-loop type solutions are obtained during training and used in inference. In other words, during inference the trained weights are fixed and are not dependent on the distribution of the inputs encountered. On the other hand, controls obtained from closed-loop control policies are actively adjusted according to the distribution encountered. In this sense, the ability to generate an optimal control policy in the form of state-based feedback is an important feature of the dynamic programming approach. However, we should note there is a price to pay for obtaining such a feedback control: the HJB equation is general difficult to solve numerically. We shall return to this point at the end of Sec.~\ref{sec:HJB_vicosity}. 

The limitation of Prop.~\ref{prop:HJB_verification} is that it assumes the value function $v^*(t,\mu)$ is continuously differentiable, which is often not the case. In order to formulate a complete characterization, we would also like to deduce the statement in the other direction: a solution to~\eqref{eq:mean_field_ctrl_prob} should also solve the PDE~\eqref{eq:HJB_dist} in an appropriate sense. In the next section, we achieve this by giving a more flexible characterization of the value function as the viscosity solution of the HJB equation.


\section{Viscosity solution of HJB equation}
\label{sec:HJB_vicosity}
\subsection{The concept of viscosity solutions}
In general, one cannot expect to have smooth solutions to the HJB equation~\eqref{eq:HJB_dist}. 
Therefore we need to extend the classical concept of PDE solutions to a type of weak solutions.
As in the analysis of classical Hamilton-Jacobi equations, we shall introduce a notion of viscosity solution for the HJB equation in the Wasserstein space of probability measures. The key idea is again the lifting identification between measures and random variables, working in the Hilbert space $L^2(\Omega,\R^{d+l})$, instead of the Wasserstein space $\cP_2(\R^{d+l})$. Then, we can use the tools developed for viscosity solutions in Hilbert spaces. The techniques presented below have been employed in the study of well-posedness for general Hamilton-Jacobi equations in Banach spaces, see e.g.~\cite{crandall1985hamilton,crandall1986hamiltona,crandall1986hamiltonb}.

For convenience, we define the Hamiltonian $\mathcal{H}(\xi,P):L^2(\Omega,\R^{d+l})\times L^2(\Omega,\R^{d+l})\rightarrow \R$ as
\begin{equation}
\mathcal{H}(\xi,P)\coloneqq \inf_{\theta\in\Theta}\E[P\cdot \bar{f}(\xi,\theta)+\bar{L}(\xi,\theta)].
\label{eq:lifted_H}
\end{equation}
Then the ``lifted'' Bellman equation of~\eqref{eq:HJB_dist} with $V(t,\xi) = v(t,\P_\xi)$ can be written down as follows, except that the state space is enlarged to $L^2(\Omega,\R^{d+l})$:
\begin{equation}
\label{eq:HJB_lifted}
\begin{cases}
\displaystyle{\frac{\partial V}{\partial t} + \mathcal{H}(\xi,DV(t,\xi)) = 0,}&\text{on~~} [0,T) \times L^2(\Omega,\R^{d+l}),\\
\displaystyle{V(T, \xi)=\E[\bar{\Phi}(\xi)]}, &\text{on~~} L^2(\Omega,\R^{d+l}).
\end{cases}
\end{equation}

\begin{definition}
We say that a bounded, uniformly continuous function $u:\,[0,T]\times\cP_2(\R^{d+l})\rightarrow\R$ is a viscosity (sub, super) solution to~\eqref{eq:HJB_dist} if the lifted function $U:\,[0,T]\times L^2(\Omega,\R^{d+l})\rightarrow\R$ defined by
\begin{equation*}
U(t,\xi)=u(t,\P_\xi)
\end{equation*}
is a viscosity (sub, super) solution to the lifted Bellman equation~\eqref{eq:HJB_lifted}, that is: \\
(i) $U(T,\xi)\leq \E[\bar{\Phi}(\xi)]$, and for any test function $\psi\in C^{1,1}([0,T]\times L^2(\Omega,\R^{d+l}))$ such that the map $U-\psi$ has a local maximum at $(t_0,\xi_0)\in [0,T)\times L^2(\Omega,\R^{d+l})$, one has
\begin{equation}
\label{eq:sub_property}
\partial_t \psi(t_0,\xi_0) + \cH(\xi_0,D\psi(t_0,\xi_0)) \geq 0.
\end{equation}
(ii) $U(T,\xi)\geq \E[\bar{\Phi}(\xi)]$, and for any test function $\psi\in C^{1,1}([0,T]\times L^2(\Omega,\R^{d+l}))$ such that the map $U-\psi$ has a local minimum at $(t_0,\xi_0)\in [0,T)\times L^2(\Omega,\R^{d+l})$, one has
\begin{equation}
\label{eq:super_property}
\partial_t \psi(t_0,\xi_0) + \cH(\xi_0,D\psi(t_0,\xi_0)) \leq 0.
\end{equation}
\end{definition}

\subsection{Existence and uniqueness of viscosity solution}
The main goal of introducing the concept of viscosity solutions is that in the viscosity sense, the HJB equation is well-posed and the value function is the unique solution of the HJB equation. We show this in Thm.~\ref{thm:value_viscosity} and~\ref{thm:value_uniqueness}. 
    
\begin{theorem}
\label{thm:value_viscosity}
The value function $v^*(t,\mu)$ defined in~\eqref{eq:value_def} is a viscosity solution to the HJB equation~\eqref{eq:HJB_dist}.
\end{theorem}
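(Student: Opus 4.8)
The plan is to verify the definition of viscosity solution directly, working with the lifted function $V(t,\xi) \coloneqq v^*(t,\P_\xi)$ on $[0,T]\times L^2(\Omega,\R^{d+l})$ and the lifted Bellman equation~\eqref{eq:HJB_lifted}. Three things need checking: that $V$ is bounded and uniformly continuous, that the terminal condition holds, and that $V$ is simultaneously a viscosity subsolution and supersolution in the interior. Boundedness is immediate from assumption (A1), since $|J(t,\mu,\bftheta)|\le \|\Phi\|_\infty + T\|L\|_\infty$ uniformly in $\bftheta$, and uniform continuity of $V$ follows from the Lipschitz continuity of $v^*$ in Prop.~\ref{prop:value_continuity} together with the contraction $W_2(\P_\xi,\P_\eta)\le \|\xi-\eta\|_{L^2}$. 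The terminal condition is an equality, $V(T,\xi)=\langle\bar\Phi(.),\P_\xi\rangle=\E[\bar\Phi(\xi)]$, directly from the definition~\eqref{eq:value_def}, so both halves of the terminal requirement hold. The substance of the proof is therefore the interior sub/supersolution inequalities, both of which I would derive from the dynamic programming principle (Prop.~\ref{prop:DPP}) combined with the chain rule~\eqref{eq:funcito_lift}.

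For the subsolution property, let $\psi\in C^{1,1}$ be a test function such that $V-\psi$ attains a local maximum at $(t_0,\xi_0)$ with $t_0<T$; normalizing, I may assume $V(t_0,\xi_0)=\psi(t_0,\xi_0)$ and $V\le\psi$ nearby. Fixing an arbitrary constant control $\theta\in\Theta$ and applying the ``$\le$'' half of the DPP~\eqref{eq:DPP_v} with $\hat t = t_0+\delta$, I bound $V(t_0,\xi_0)=v^*(t_0,\P_{\xi_0})$ from above by $\int_{t_0}^{\hat t}\E[\bar L(W_s,\theta)]\,ds + V(\hat t,W_{\hat t})$, where $W_s=W_s^{t_0,\xi_0,\theta}$. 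Since $\bar f$ is bounded, $\|W_s-\xi_0\|_{L^2}\le C|s-t_0|$, so $W_{\hat t}$ lies in the neighborhood for small $\delta$ and I may replace $V(\hat t,W_{\hat t})$ by $\psi(\hat t,W_{\hat t})$. Expanding $\psi(\hat t,W_{\hat t})-\psi(t_0,\xi_0)$ via the (time-dependent) chain rule, dividing by $\delta$, and letting $\delta\to0^+$ yields $0\le \partial_t\psi(t_0,\xi_0)+\E[D\psi(t_0,\xi_0)\cdot\bar f(\xi_0,\theta)+\bar L(\xi_0,\theta)]$; taking the infimum over $\theta\in\Theta$ gives exactly~\eqref{eq:sub_property} with the Hamiltonian~\eqref{eq:lifted_H}.

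For the supersolution property, let $V-\psi$ attain a local minimum at $(t_0,\xi_0)$, $t_0<T$, again normalized so $V\ge\psi$ nearby with equality at $(t_0,\xi_0)$. I argue by contradiction: if~\eqref{eq:super_property} fails, then $\partial_t\psi(t_0,\xi_0)+\cH(\xi_0,D\psi(t_0,\xi_0))\ge\eta$ for some $\eta>0$, i.e.\ $\partial_t\psi+\E[D\psi\cdot\bar f(.,\theta)+\bar L(.,\theta)]\ge\eta$ for every $\theta$. Using the continuity of $\partial_t\psi$ and $D\psi$ and the boundedness and uniform-in-$\theta$ Lipschitz continuity of $\bar f,\bar L$ from (A1), this lower bound persists (with $\eta$ replaced by $\eta/2$) on a neighborhood, uniformly over all $\theta\in\Theta$. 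Now I use the full DPP to select, for arbitrary $\veps>0$ and small $\delta$, a near-optimal control $\bftheta^\veps$ with $V(t_0,\xi_0)+\veps\delta \ge \int_{t_0}^{\hat t}\E[\bar L(W_s,\theta^\veps_s)]\,ds + V(\hat t,W_{\hat t})$; replacing $V(\hat t,W_{\hat t})$ by the smaller $\psi(\hat t,W_{\hat t})$ and expanding via the chain rule, the uniform lower bound forces $\veps\delta\ge(\eta/2)\delta$, hence $\veps\ge\eta/2$, contradicting the arbitrariness of $\veps$. This establishes~\eqref{eq:super_property}.

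I expect the main obstacle to be the careful justification of the chain-rule expansion in the two inequalities: formula~\eqref{eq:funcito_lift} is stated for autonomous dynamics and a time-independent function, whereas here I apply it to a time-dependent test function $\psi$ and a measurable-in-time control $\theta_s$. Since $s\mapsto W_s$ solves the $L^2$-valued ODE, the composite $s\mapsto\psi(s,W_s)$ is absolutely continuous with derivative $\partial_t\psi(s,W_s)+\E[D\psi(s,W_s)\cdot\bar f(W_s,\theta_s)]$, so the extension is routine but must be stated. The second delicate point, used only in the supersolution case, is the uniform-in-$\theta$ persistence of the strict inequality on a neighborhood; this is exactly where the uniform (in $\theta$) Lipschitz and boundedness hypotheses in (A1) are indispensable, since the control $\bftheta^\veps$ is not fixed and may range over all of $\Theta$ as $\veps\to0$.
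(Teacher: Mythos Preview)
Your proposal is correct and follows essentially the same route as the paper: lift to $V^*(t,\xi)$, use the DPP in lifted form, and verify the sub/supersolution inequalities via the chain rule~\eqref{eq:funcito_lift}. The subsolution argument (fix a constant control $\theta$, use one direction of the DPP, substitute $\psi$ for $V$, expand and pass to the limit) is identical to the paper's.

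The only difference is stylistic: for the supersolution part you argue by contradiction (assume $\partial_t\psi+\cH\ge\eta>0$, propagate the strict inequality uniformly in $\theta$ to a neighborhood, and force $\veps\ge\eta/2$), whereas the paper proceeds directly, first isolating the continuity of the Hamiltonian $\cH(\xi,P)$ as a separate lemma (Lemma~\ref{lem:H_continuity}) and then using it to bound $\partial_t\psi(s,W_s)+\cH(W_s,D\psi(s,W_s))$ by $\partial_t\psi(t_0,\xi_0)+\cH(\xi_0,D\psi(t_0,\xi_0))-\veps$ on $[t_0,t_0+h]$. Both arguments hinge on exactly the point you flag as the ``second delicate point'': the uniformity in $\theta$ needed because the near-optimal control depends on the scale parameter. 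The paper's packaging via Lemma~\ref{lem:H_continuity} is slightly cleaner since it takes the infimum first and proves continuity of $\cH$ once and for all, but your formulation is equivalent.
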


Before proving Thm.~\ref{thm:value_viscosity}, we first introduce a useful Lemma regarding the continuity of $\mathcal{H}(\xi,P)$.
\begin{lemma}
  \label{lem:H_continuity}
  The Hamiltonian $\mathcal{H}(\xi,P)$ defined in~\eqref{eq:lifted_H} satisfies the following continuity conditions:
  \begin{align}
    &|\mathcal{H}(\xi,P)-\mathcal{H}(\xi,Q)|\leq K_L \|P-Q\|_{L^2}, \label{eq:H_continuity1}\\
    &|\mathcal{H}(\xi,P)-\mathcal{H}(\zeta,P)|\leq K_L (1+\|P\|_{L^2})\|\xi-\zeta\|_{L^2}. \label{eq:H_continuity2}
  \end{align}
\end{lemma}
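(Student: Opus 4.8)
The plan is to reduce both estimates to a single elementary observation about infima, and then control the resulting pointwise (in $\theta$) differences using only the boundedness and Lipschitz hypotheses of assumption (A1). To set up, for fixed arguments write $g^{\xi,P}(\theta) \coloneqq \E[P\cdot\bar{f}(\xi,\theta)+\bar{L}(\xi,\theta)]$, so that $\cH(\xi,P)=\inf_{\theta\in\Theta}g^{\xi,P}(\theta)$. The workhorse is the standard inequality that for any two families of reals bounded below, $|\inf_\theta a(\theta)-\inf_\theta b(\theta)|\leq \sup_\theta|a(\theta)-b(\theta)|$; this lets us replace each difference of Hamiltonians by the supremum over $\theta$ of a difference of expectations, after which the infimum no longer needs to be tracked. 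The boundedness of $\bar{f},\bar{L}$ (from (A1), via $\bar{f}(w,\theta)=(f(x,\theta),0)$ and $\bar{L}(w,\theta)=L(x,\theta)$) guarantees each $g^{\xi,P}(\theta)$ is finite, so the inequality applies.

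For~\eqref{eq:H_continuity1}, I fix $\xi$ and compare $g^{\xi,P}(\theta)$ with $g^{\xi,Q}(\theta)$. The $\bar{L}$ terms cancel, leaving $|g^{\xi,P}(\theta)-g^{\xi,Q}(\theta)|=|\E[(P-Q)\cdot\bar{f}(\xi,\theta)]|$. Applying the Cauchy--Schwarz inequality in $L^2(\Omega,\R^{d+l})$ bounds this by $\|P-Q\|_{L^2}\,(\E\|\bar{f}(\xi,\theta)\|^2)^{1/2}$, and since $\bar{f}$ is uniformly bounded the second factor is controlled by a generic constant $K_L$ independent of $\theta$. Taking the supremum over $\theta$ and invoking the infimum inequality yields~\eqref{eq:H_continuity1}.

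For~\eqref{eq:H_continuity2}, I fix $P$ and compare $g^{\xi,P}(\theta)$ with $g^{\zeta,P}(\theta)$, splitting the difference into the $\bar{f}$ and $\bar{L}$ contributions. The $\bar{f}$ part is $|\E[P\cdot(\bar{f}(\xi,\theta)-\bar{f}(\zeta,\theta))]|$; here I use the Lipschitz continuity of $\bar{f}$ in its first argument (with constant independent of $\theta$, by (A1)) to get the pointwise bound $\|\bar{f}(\xi,\theta)-\bar{f}(\zeta,\theta)\|\leq K_L\|\xi-\zeta\|$, then Cauchy--Schwarz to reach $K_L\|P\|_{L^2}\|\xi-\zeta\|_{L^2}$. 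The $\bar{L}$ part is $|\E[\bar{L}(\xi,\theta)-\bar{L}(\zeta,\theta)]|$, which the Lipschitz continuity of $\bar{L}$ and Jensen's inequality bound by $K_L\|\xi-\zeta\|_{L^2}$. Summing the two contributions gives $|g^{\xi,P}(\theta)-g^{\zeta,P}(\theta)|\leq K_L(1+\|P\|_{L^2})\|\xi-\zeta\|_{L^2}$ uniformly in $\theta$, and the infimum inequality finishes~\eqref{eq:H_continuity2}.

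The argument is essentially routine; the only genuine point requiring care is the passage through the infimum over $\theta$, since $\cH$ is a nonlinear (infimal) functional of the integrands rather than a single expectation. The elementary supremum bound on differences of infima is what makes this transparent, and the crucial structural input from (A1) is precisely that the Lipschitz constants of $f$ and $L$ are \emph{independent of $\theta$}, so that the resulting constant $K_L$ does not degenerate when the supremum over $\Theta$ is taken.
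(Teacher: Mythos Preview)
Your proof is correct and follows essentially the same approach as the paper: both introduce the pre-infimum functional $\hat{\cH}(\xi,P;\theta)=\E[P\cdot\bar{f}(\xi,\theta)+\bar{L}(\xi,\theta)]$ (your $g^{\xi,P}(\theta)$), establish the two pointwise-in-$\theta$ estimates via Cauchy--Schwarz together with boundedness of $\bar{f}$ and Lipschitz continuity of $\bar{f},\bar{L}$, and then pass to the infimum. The only cosmetic difference is that you invoke the ready-made inequality $|\inf_\theta a(\theta)-\inf_\theta b(\theta)|\leq\sup_\theta|a(\theta)-b(\theta)|$, whereas the paper unpacks this via an explicit $\veps$-argument with a near-minimizing sequence $\theta_n$; these are equivalent.
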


\begin{proof}
  For simplicity we define 
  \begin{equation*}
    \hat{\mathcal{H}}(\xi,P;\theta)\coloneqq \E[P\cdot \bar{f}(\xi,\theta)+\bar{L}(\xi,\theta)].
  \end{equation*}
  The boundedness of $\bar{f}$ and $\bar{L}$ gives us
  \begin{align}
    &|\hat{\mathcal{H}}(\xi,P;\theta) - \hat{\mathcal{H}}(\xi,Q;\theta)| \leq K_L\|P-Q\|_{L^2} \label{eq:H_continuity3} \\
    &|\hat{\mathcal{H}}(\xi,P;\theta) - \hat{\mathcal{H}}(\zeta,P;\theta)| \leq K_L(1+\|P\|_{L^2})\|\xi-\zeta\|_{L^2}. \label{eq:H_continuity4}
  \end{align}
  By definition we know 
  \begin{equation*}
    \mathcal{H}(\xi,P) \coloneqq \inf_{\theta\in\Theta} \hat{\mathcal{H}}(\xi,P;\theta).
  \end{equation*}

  Let $\theta_n$ satisfy
  \begin{equation*}
    \hat{\mathcal{H}}(\xi,P;\theta_n) - \mathcal{H}(\xi,Q) \leq 1/n.
  \end{equation*}
  Then
  \begin{align*}
    &\mathcal{H}(\xi,P)-\mathcal{H}(\xi,Q) \\
    =&\, (\mathcal{H}(\xi,P)- \hat{\mathcal{H}}(\xi,P;\theta_n))
    + (\hat{\mathcal{H}}(\xi,P;\theta_n) - \hat{\mathcal{H}}(\xi,Q;\theta_n)) +
    (\hat{\mathcal{H}}(\xi,Q;\theta_n) - \mathcal{H}(\xi,Q)) \\
    \leq\, & |\hat{\mathcal{H}}(\xi,P;\theta_n) - \hat{\mathcal{H}}(\xi,Q;\theta_n)| + 1/n \\
    \leq\, & K_L\|P-Q\|_{L^2} + 1/n.
  \end{align*}
  Taking $n\rightarrow \infty$, we have $\mathcal{H}(\xi,P)-\mathcal{H}(\xi,Q) \leq K_L\|P-Q\|_{L^2}$. A similar computation shows $\mathcal{H}(\xi,Q)-\mathcal{H}(\xi,P) \leq K_L\|P-Q\|_{L^2}$, and we prove~\eqref{eq:H_continuity1}.
 ~\eqref{eq:H_continuity2} can be proved in a similar way, based on the condition~\eqref{eq:H_continuity4}.
\end{proof}

\begin{proof}[Proof of Thm.~\ref{thm:value_viscosity}]
  We lift the value function $v^*(t,\mu)$ to $[0,T]\times L^2(\Omega,\R^{d+l})$ and denote it by $V^*(t,\xi)$. Note that the convergence $\xi_n\rightarrow \xi$ in $L^2(\Omega,\R^{d+l})$ implies the convergence $\P_{\xi_n}\rightarrow \P_{\xi}$ in $\cP_2(\R^{d+l})$, thus Prop. \ref{prop:value_continuity} guarantees that $V^*(t,\xi)$ is continuous on $[0,T]\times L^2(\Omega,\R^{d+l})$. By definition we know $V^*(t,\xi)$ is bounded and $V^*(T,\xi)=\E(\bar{\Phi}(\xi))$. It remains to show the viscosity sub and super solution properties of $V^*(t,\xi)$. To proceed, we note that $V^*(t,\xi)$ also inherits the dynamic programming principle from $v^*(t,\mu)$ (c.f.\,Prop.~\ref{prop:DPP}), which can be represented as
  \begin{equation}
    \label{eq:DPP_V}
    V^*(t,\xi) = \inf_{\bftheta\in L^\infty([0,T],\Theta)}\Big[ \int_{t}^{\hat{t}}\E [\bar{L}(W_s^{t,\xi,\bftheta},\theta_s)]\,ds + V^*(\hat{t}, W_{\hat{t}}^{t,\xi,\bftheta})\Big].
  \end{equation}

  \textit{1. Subsolution property.} Suppose $\psi$ is a test function in $C^{1,1}([0,T]\times L^2(\Omega,\R^{d+l}))$ and $V^*-\psi$ has a local maximum at $(t_0,\xi_0)\in [0,T)\times L^2(\Omega,\R^{d+l})$, which means 
  \begin{equation*}
    (V^*-\psi)(t,\xi)\leq (V^*-\psi)(t_0,\xi_0) \text{ for all } (t,\xi) \text{ satisfying } |t-t_0|+\|\xi-\xi_0\|_{L^2} < \delta.
  \end{equation*}
  Let $\theta_0$ be an arbitrary element in $\Theta$ and define a control process $\bftheta^0 \in L^\infty([0,T],\Theta)$ such that $\theta^0_s \equiv \theta_0,\,s\in[t_0,T]$. Let $h\in (0,T-t_0)$ be small enough such that $|s-t_0|+\|W_s^{t_0,\xi_0,\bftheta^0}-\xi_0\|_{L^2} < \delta$ for all $s\in[t_0,t_0+h]$. This is possible from an argument similar in the proof of Prop. \ref{prop:value_continuity}. From the dynamic programming principle~\eqref{eq:DPP_V}, we have
  \begin{equation*}
    V^*(t_0,\xi_0) \leq \int_{t_0}^{t_0+h}\E [\bar{L}(W_s^{t_0,\xi_0,\bftheta^0},\theta^0_s)]\,ds + V^*(t_0+h, W_{t_0+h}^{t_0,\xi_0,\bftheta^0}) .
  \end{equation*}
  Using the condition of local maximality and chain rule~\eqref{eq:funcito_lift}, we have the inequality
  \begin{align}
    0 &\leq V^*(t_0+h, W_{t_0+h}^{t_0,\xi_0,\bftheta^0}) - V^*(t_0,\xi_0) +  \int_{t_0}^{t_0+h}\E [\bar{L}(W_s^{t_0,\xi_0,\bftheta^0},\theta^0_s)]\,ds \nonumber \\
    &\leq \psi(t_0+h, W_{t_0+h}^{t_0,\xi_0,\bftheta^0}) - \psi(t_0,\xi_0) +  \int_{t_0}^{t_0+h}\E [\bar{L}(W_s^{t_0,\xi_0,\bftheta^0},\theta^0_s)]\,ds \nonumber \\
    &=  \int_{t_0}^{t_0+h} \partial_t\psi(s,W_s^{t_0,\xi_0,\bftheta^0}) + 
    \E [D\psi(s,W_s^{t_0,\xi_0,\bftheta^0})\cdot \bar{f}(W_s^{t_0,\xi_0,\bftheta^0},\theta^0_s)]\,ds \nonumber \\
    &\hphantom{~=} + \int_{t_0}^{t_0+h}\E [\bar{L}(W_s^{t_0,\xi_0,\bftheta^0},\theta^0_s)]\,ds. \label{eq:sub_inequal1}
  \end{align}
  Since we know $W_s^{t_0,\xi_0,\bftheta^0}$ is continuous in time, in the sense of $L^2$-metric of $L^2(\Omega,\R^{d+l})$, hence 
  \begin{align*}
    \partial_t\psi(s,W_s^{t_0,\xi_0,\bftheta^0}) + 
    \E [D\psi(s,W_s^{t_0,\xi_0,\bftheta^0})\cdot \bar{f}(W_s^{t_0,\xi_0,\bftheta^0},\theta^0_s) + \bar{L}(W_s^{t_0,\xi_0,\bftheta^0},\theta^0_s)]
  \end{align*}
  is also continuous in time. Dividing the inequality~\eqref{eq:sub_inequal1} by $h$ and taking the limit $h\rightarrow 0$, we obtain
  \begin{align*}
    0 &\leq \Big[\partial_t\psi(s,W_s^{t_0,\xi_0,\bftheta^0}) + 
    \E [D\psi(s,W_s^{t_0,\xi_0,\bftheta^0})\cdot \bar{f}(W_s^{t_0,\xi_0,\bftheta^0},\theta^0_s) + \bar{L}(W_s^{t_0,\xi_0,\bftheta^0},\theta^0_s)]\Big]\Big |_{s=t_0} \\
    &= \partial_t\psi(t_0,\xi_0) + 
    \E [D\psi(t_0,\xi_0)\cdot \bar{f}(\xi_0,\theta_0) + \bar{L}(\xi_0,\theta_0)].
  \end{align*}
  Since $\theta_0$ is arbitrary in $\Theta$, we obtain the desired subsolution property~\eqref{eq:sub_property}.

  \textit{2. Supersolution property.} Suppose $\psi$ is a test function in $C^{1,1}([0,T]\times L^2(\Omega,\R^{d+l}))$ and $V^*-\psi$ has a local minimum at $(t_0,\xi_0)\in [0,T)\times L^2(\Omega,\R^{d+l})$, which means 
  \begin{equation*}
    (V^*-\psi)(t,\xi)\geq (V^*-\psi)(t_0,\xi_0) \text{ for all } (t,\xi) \text{ satisfying } |t-t_0|+\|\xi-\xi_0\|_{L^2} < \delta_1.
  \end{equation*}
  Given an arbitrary $\veps>0$, since Lemma \ref{lem:H_continuity} tells us $\mathcal{H}$ is continuous, there exits $\delta_2 >0$ such that 
  \begin{eqnarray*}
    |\partial_t\psi(t,\xi)+\mathcal{H}(t,\xi) - \partial_t\psi(t_0,\xi_0)  - \mathcal{H}(t_0,\xi_0)|<\veps,
  \end{eqnarray*}
  for all $(t,\xi) \text{ satisfying } |t-t_0|+\|\xi-\xi_0\|_{L^2} < \delta_2$.
  Again as argued in the proof of Prop. \ref{prop:value_continuity}, we can choose $h\in (0,T-t_0)$ to be small enough such that $|s-t_0|+\|W_s^{t_0,\xi_0,\bftheta}-\xi_0\|_{L^2} < \min\{\delta_1,\delta_2\}$ for all $s\in[t_0,t_0+h]$, $\bftheta \in L^\infty([0,T],\Theta)$.

  From the dynamic programming principle~\eqref{eq:DPP_V}, there exists $\bftheta^h$ such that
  \begin{equation*}
    V^*(t_0,\xi_0) + \veps h \geq \int_{t_0}^{t_0+h}\E [\bar{L}(W_s^{t_0,\xi_0,\bftheta^h},\theta^h_s)]\,ds + V^*(t_0+h, W_{t_0+h}^{t_0,\xi_0,\bftheta^h}) .
  \end{equation*}
  Again using the condition of local minimality, chain rule~\eqref{eq:funcito_lift}, and definition of $\mathcal{H}$, we have the inequality
  \begin{align}
    \veps h &\geq V^*(t_0+h, W_{t_0+h}^{t_0,\xi_0,\bftheta^h}) - V^*(t_0,\xi_0) +  \int_{t_0}^{t_0+h}\E [\bar{L}(W_s^{t_0,\xi_0,\bftheta^h},\theta^h_s)]\,ds \nonumber \\
    &\geq \psi(t_0+h, W_{t_0+h}^{t_0,\xi_0,\bftheta^h}) - \psi(t_0,\xi_0) +  \int_{t_0}^{t_0+h}\E [\bar{L}(W_s^{t_0,\xi_0,\bftheta^h},\theta^h_s)]\,ds \nonumber \\
    &=  \int_{t_0}^{t_0+h} \partial_t\psi(s,W_s^{t_0,\xi_0,\bftheta^h}) + 
    \E [D\psi(s,W_s^{t_0,\xi_0,\bftheta^h})\cdot \bar{f}(W_s^{t_0,\xi_0,\bftheta^h},\theta^h_s)]\,ds \nonumber  \\
    &\hphantom{~=} + \int_{t_0}^{t_0+h}\E [\bar{L}(W_s^{t_0,\xi_0,\bftheta^h},\theta^h_s)]\,ds \nonumber \\
    &\geq  \int_{t_0}^{t_0+h} \partial_t\psi(s,W_s^{t_0,\xi_0,\bftheta^h}) + \mathcal{H}(W_s^{t_0,\xi_0,\bftheta^h}, D\psi(s,W_s^{t_0,\xi_0,\bftheta^h}))\,ds \nonumber \\
    &\geq h (\partial_t\psi(t_0,\xi_0) + \mathcal{H}(t_0,\xi_0) - \veps). \label{eq:super_inequal1}
  \end{align}
  Dividing the inequality~\eqref{eq:super_inequal1} by $h$ and taking the limit $\veps \rightarrow 0$, we obtain the desired supersolution property~\eqref{eq:super_property}.
\end{proof}

Theorem~\ref{thm:value_viscosity} incidentally establishes the existence of viscosity solutions to the HJB, which we can identify as the value function of the mean-field control problem. We show below that this solution is in fact unique. 

\begin{theorem}
\label{thm:value_uniqueness}
Let $u_1$ and $u_2$ be two functions defined on $[0,T]\times\cP_2(\R^{d+l})$ such that $u_1$ and $u_2$ are viscosity subsolution and supersolution to~\eqref{eq:HJB_dist} respectively. Then $u_1\leq u_2$. Consequently, the value function $v^*(t,\mu)$ defined in~\eqref{eq:value_def} is the unique viscosity solution to the HJB equation~\eqref{eq:HJB_dist}.
\end{theorem}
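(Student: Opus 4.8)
The plan is first to reduce the uniqueness claim to the comparison statement $u_1\le u_2$: any two viscosity solutions are simultaneously sub- and supersolutions, so applying comparison twice yields equality, and since $v^*$ is a viscosity solution by Thm.~\ref{thm:value_viscosity}, it is the unique one. To prove comparison I would work entirely with the lifted functions $U_1(t,\xi)=u_1(t,\P_\xi)$ and $U_2(t,\xi)=u_2(t,\P_\xi)$ on $[0,T]\times H$, where $H\coloneqq L^2(\Omega,\R^{d+l})$; these are bounded and uniformly continuous by hypothesis and are respectively a viscosity sub- and supersolution of the lifted equation~\eqref{eq:HJB_lifted}. The first standard step is to pass to a \emph{strict} subsolution: for $\sigma>0$ set $U_1^\sigma(t,\xi)\coloneqq U_1(t,\xi)-\sigma(T-t)$. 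A test function touching $U_1^\sigma$ from above yields, after absorbing the smooth term $\sigma(T-t)$, the strict inequality $\partial_t\psi+\cH(\xi,D\psi)\ge\sigma$, while the terminal condition $U_1^\sigma(T,\cdot)=U_1(T,\cdot)\le\E[\bar\Phi]$ is preserved. Since $U_1^\sigma\to U_1$ uniformly as $\sigma\to0$, it suffices to prove $U_1^\sigma\le U_2$ for each fixed $\sigma>0$.

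Next is the doubling of variables. Arguing by contradiction, assume $m\coloneqq\sup_{[0,T]\times H}(U_1^\sigma-U_2)>0$ and, for small $\epsilon,\beta>0$, consider
\begin{equation*}
\Psi(t,s,\xi,\eta)\coloneqq U_1^\sigma(t,\xi)-U_2(s,\eta)-\frac{1}{2\epsilon}\big(|t-s|^2+\|\xi-\eta\|_{L^2}^2\big)-\beta\big(\|\xi\|_{L^2}^2+\|\eta\|_{L^2}^2\big).
\end{equation*}
The penalizations in $t-s$ and $\xi-\eta$ force near-maximizers to cluster on the diagonal, and boundedness of $U_1,U_2$ together with the coercive $\beta$-terms confines them to a bounded set. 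The decisive difficulty, and the point where the finite-dimensional argument breaks down, is that in the infinite-dimensional space $H$ closed bounded sets are only weakly compact while $U_1^\sigma-U_2$ is merely strongly continuous, so $\Psi$ need not attain its supremum. I would resolve this exactly as in the Crandall--Lions theory for Hamilton--Jacobi equations in Hilbert spaces (\cite{crandall1985hamilton,crandall1986hamiltona,crandall1986hamiltonb}): apply a smooth variational principle of Borwein--Preiss type to perturb $\Psi$ by a $C^1$ function with arbitrarily small, controllable gradient, so that the perturbed functional attains a genuine maximum at some $(\bar t,\bar s,\bar\xi,\bar\eta)$, the perturbation being absorbed into the test functions below.

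At such a maximizer, freezing $(\bar s,\bar\eta)$ exhibits a smooth test function touching $U_1^\sigma$ from above at $(\bar t,\bar\xi)$ with time-derivative $\tfrac1\epsilon(\bar t-\bar s)$ and spatial gradient $P+2\beta\bar\xi$ (plus the small perturbation gradient), where $P\coloneqq\tfrac1\epsilon(\bar\xi-\bar\eta)$; symmetrically, freezing $(\bar t,\bar\xi)$ gives a test function touching $U_2$ from below at $(\bar s,\bar\eta)$ with matching time-derivative and gradient $P-2\beta\bar\eta$. Invoking the strict subsolution inequality and the supersolution inequality and subtracting, the time-derivatives cancel and I am left with
\begin{equation*}
\sigma\le\cH(\bar\xi,P+2\beta\bar\xi)-\cH(\bar\eta,P-2\beta\bar\eta)+o(1).
\end{equation*}
Now Lemma~\ref{lem:H_continuity} controls the right-hand side: the difference in the $P$-argument is handled by~\eqref{eq:H_continuity1} and the difference in the state by~\eqref{eq:H_continuity2}, giving a bound of order $K_L(1+\|P\|_{L^2})\|\bar\xi-\bar\eta\|_{L^2}+K_L\beta(\|\bar\xi\|_{L^2}+\|\bar\eta\|_{L^2})$. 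The crucial closing estimate is the standard doubling fact that the penalty $\tfrac1\epsilon\|\bar\xi-\bar\eta\|_{L^2}^2=\|P\|_{L^2}\|\bar\xi-\bar\eta\|_{L^2}\to0$ as $\epsilon\to0$, while the coercivity bound $\beta\|\bar\xi\|_{L^2}^2\le C$ yields $\beta(\|\bar\xi\|_{L^2}+\|\bar\eta\|_{L^2})\to0$ as $\beta\to0$. Sending $\epsilon\to0$ first and then $\beta\to0$ forces the right-hand side to $0$, contradicting $\sigma>0$; hence $U_1^\sigma\le U_2$, and letting $\sigma\to0$ gives $u_1\le u_2$. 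I expect the main obstacle to be precisely the non-compactness of $H$ and the careful bookkeeping of the variational-principle perturbation, since the algebraic cancellation and the Hamiltonian estimates are routine once a genuine maximizer with usable test functions is secured.
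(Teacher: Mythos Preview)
Your proposal is correct and follows essentially the same comparison-by-doubling strategy as the paper: lift to $H=L^2(\Omega,\R^{d+l})$, penalize with a coercive term and a doubling term, invoke a variational principle to secure an attained maximizer, apply the sub/supersolution inequalities, and close with the Hamiltonian Lipschitz estimates of Lemma~\ref{lem:H_continuity}. The only notable difference is the choice of variational principle---the paper uses Stegall's theorem (linear perturbation) where you propose Borwein--Preiss---and the paper ties the coercive and doubling parameters together as $\varepsilon$ and $1/\varepsilon^2$ rather than using independent $\beta,\epsilon$; both variants are standard and the paper also carries out explicitly the step showing the maximizer stays away from $t=T$, which you should not skip when writing out the details.
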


\begin{proof}
  The final assertion of the theorem follows immediately from Thm. \ref{thm:value_viscosity}.  As before we consider the lifted version $U_1(t,\xi)=u_1(t,\P_{\xi})$, $U_2(t,\xi)=u_2(t,\P_{\xi})$ on $[0,T]\times L^2(\Omega,\R^{d+l})$. By definition we know $U_1$ and $U_2$ are subsolution and supersolution to~\eqref{eq:HJB_lifted} respectively. By definition of viscosity solution, $U_1,U_2$ are both bounded and uniformly continuous. We denote their moduli of continuity by $\omega_1,\omega_2$, which satisfy
  \begin{equation*}
    |U_i(t,\xi)-U_i(s,\zeta)|\leq \omega_i(|t-s|+\|\xi-\zeta\|_{L^2}), \quad i=1,2
  \end{equation*}
  for all $0\leq t\leq s\leq T, \xi,\zeta\in L^2(\Omega,\R^{d+l})$, and $\omega_i(r)\rightarrow0$ as $r\rightarrow 0^{+}$.
  To prove $U_1\leq U_2$, we assume
  \begin{equation}
    \delta\coloneqq \sup_{[0,T]\times L^2(\Omega,\R^{d+l})} U_1(t,\xi)-U_2(t,\xi)> 0,
  \end{equation}
  and proceed in five steps below to derive a contradiction.

  1). Let $\sigma,\veps\in(0,1)$ and construct the auxiliary function
  \begin{equation}
    G(t,s,\xi,\zeta)=U_1(t,\xi)-U_2(s,\zeta) + \sigma(t+s)-\veps(\|\xi\|^2_2+\|\zeta\|^2_2)-\frac{1}{\veps^2}((t-s)^2+\|\xi-\zeta\|_{L^2}^2),
  \end{equation}
  for $t,s\in[0,T], \xi,\zeta\in L^2(\Omega,\R^{d+l})$.
  From Stegall Theorem~\cite{stegall1978optimization} there exist $\eta_t,\eta_s\in\R$, $\eta_{\xi},\eta_{\zeta}\in L^2(\Omega,\R^{d+l})$ such that $|\eta_t|,|\eta_s|,\|\eta_{\xi}\|_{L^2},\|\eta_{\zeta}\|_{L^2}\leq \veps$ and the function with linear perturbation
  \begin{equation}
    \tilde{G}(t,s,\xi,\zeta)\coloneqq G(t,s,\xi,\zeta)-\eta_t t -\eta_s s-\E[\eta_\xi\cdot \xi] - \E[\eta_{\zeta}\cdot \zeta]
  \end{equation}
  has a maximum over $[0,T]\times [0,T] \times L^2(\Omega,\R^{d+l}) \times L^2(\Omega,\R^{d+l})$ at $(t_0,s_0,\xi_0,\zeta_0)$. 

  2). Since $\tilde{G}(0,0,0,0) \leq \tilde{G}(t_0,s_0,\xi_0,\zeta_0)$ and $U_1,U_2$ are bounded, after an arrangement of terms, we have
  \begin{align}
    &\veps(\|\xi_0\|_{L^2}^2 + \|\zeta_0\|_{L^2}^2) \nonumber \\
    \leq\,& C + \sigma (t_0 + s_0) -\frac{1}{\veps^2}((t_0-s_0)^2+\|\xi_0-\zeta_0\|_{L^2}^2) -\eta_t t_0 - \eta_s s_0 \nonumber \\
    & - \E[\eta_\xi\cdot \xi_0] - \E[\eta_{\zeta}\cdot \zeta_0]  \nonumber \\
    \leq\,& C - \E[\eta_\xi\cdot \xi_0] - \E[\eta_{\zeta}\cdot \zeta_0] \nonumber \\
    \leq\,& C + \sqrt{2}\,\veps(\|\xi_0\|_{L^2}^2 + \|\zeta_0\|_{L^2}^2)^{1/2}. \label{eq:unique_ineq1}
  \end{align}
  Here and in the following $C$ denotes generic positive constant, whose value may change from line to line but is always independent of $\veps$ and $\sigma$. Solving the quadratic inequality above, we get
  \begin{equation}
    (\|\xi_0\|_{L^2}^2 + \|\zeta_0\|_{L^2}^2)^{1/2} \leq C(1+\veps^{-1/2}).
    \label{eq:unique_est1}
  \end{equation}
  Now arguing in the same way as~\eqref{eq:unique_ineq1} and further combining~\eqref{eq:unique_ineq1}, we have
  \begin{align*}
    \frac{1}{\veps^2}((t_0-s_0)^2+\|\xi_0-\zeta_0\|_{L^2}^2) 
    & \leq  C - \E[\eta_\xi\cdot \xi_0] - \E[\eta_{\zeta}\cdot \zeta_0] \\
    & \leq  C + \sqrt{2}\,\veps(\|\xi_0\|_{L^2}^2 + \|\zeta_0\|_{L^2}^2)^{1/2} \\
    & \leq C,
  \end{align*}
  or equivalently 
  \begin{equation}
    |t_0-s_0| + \|\xi_0-\zeta_0\|_{L^2} \leq C\veps.
    \label{eq:unique_est2}
  \end{equation}
  
  3). Eq.~\eqref{eq:unique_est2} allows us to further sharpen the estimate of $(t-s)^2+\|\xi-\zeta\|_{L^2}^2$. Specifically, since $\tilde{G}(t_0,t_0,\xi_0,\xi_0) \leq \tilde{G}(t_0,s_0,\xi_0,\zeta_0)$, we have
  \begin{align*}
    & \E[\eta_{s}\cdot (s_0-t_0)] + \E[\eta_{\zeta}\cdot (\zeta_0-\xi_0)] \\
    \leq\, & U_2(t_0,\xi_0)-U_2(s_0,\zeta_0) + \sigma(s_0-t_0) + \veps(\|\xi_0\|_{L^2}^2 - \|\zeta_0\|_{L^2}^2)\\
    & - \frac{1}{\veps^2}((t_0-s_0)^2+\|\xi_0-\zeta_0\|_{L^2}^2). 
  \end{align*}

  Rearranging the above inequality and using estimates~\eqref{eq:unique_est1},~\eqref{eq:unique_est2}, and uniform continuity of $U_2$, we obtain
  \begin{align*}
    &\frac{1}{\veps^2}((t_0-s_0)^2+\|\xi_0-\zeta_0\|_{L^2}^2)  \\
    \leq\, & \omega_2(|t_0-s_0|+\|\xi_0-\zeta_0\|_{L^2}) + C(|t_0-s_0|+\|\xi_0-\zeta_0\|_{L^2}) + 
    \veps \|\xi_0+\zeta_0\|_{L^2} \|\xi_0 - \zeta_0\|_{L^2} \\
    \leq\, & \omega_2(|t_0-s_0|+\|\xi_0-\zeta_0\|_{L^2}) + C(|t_0-s_0|+\|\xi_0-\zeta_0\|_{L^2}) \\
    \leq\, & \omega_2(C\veps) + C\veps.
  \end{align*}
  By the property of modulus, we conclude
  \begin{equation}
    |t_0-s_0| + \|\xi_0-\zeta_0\|_{L^2} = o(\veps).
    \label{eq:unique_est3}
  \end{equation}

  4). From the definition of $\tilde{G}$ and $\delta$, we can choose $\veps$ so small  that 
  \begin{equation*}
    \sup_{[0,T]\times L^2(\Omega,\R^{d+l})}\tilde{G}(t,t,\xi,\xi)\geq \frac{\delta}{2}.
  \end{equation*}
  Using estimate~\eqref{eq:unique_est1},~\eqref{eq:unique_est3}, we can furthermore choose $\sigma, \veps$ small enough such that 
  \begin{align*} 
    & U_1(t_0,\xi_0)-U_2(s_0,\zeta_0) \\
    \geq\, &  \tilde{G}(t_0,s_0,\xi_0,\zeta_0) -  C\sigma - C\veps \\
    \geq\, &  \sup_{[0,T]\times L^2(\Omega,\R^{d+l})}\tilde{G}(t,t,\xi,\xi) - \frac{\delta}{4} \\
    \geq\, & \frac{\delta}{4}.
  \end{align*}
  Noting the terminal condition $U_1(T,\xi)\leq U_2(T,\xi)$, we are ready to estimate $|T-t_0|$ through
  \begin{align*}
    \frac{\delta}{4} \leq\, & U_1(t_0,\xi_0)-U_2(s_0,\zeta_0) \\
    \leq\,& U_1(t_0,\xi_0)-U_1(T,\xi_0) + U_1(T,\xi_0)-U_2(T,\xi_0) \\
    &+ U_2(T,\xi_0)-U_2(t_0,\xi_0) + 
    U_2(t_0,\xi_0)-U_2(s_0,\zeta_0) \\
    \leq\,& \omega_1(|T-t_0|) + \omega_2(|T-t_0|) + \omega_2(|t_0-s_0| + \|\xi_0-\zeta_0\|_{L^2}) \\
    =\,& \omega_1(|T-t_0|) + \omega_2(|T-t_0|) + \omega_2(o(\veps)).
  \end{align*}
  Therefore, when $\veps$ is small enough, we have
  \begin{equation*}
    \omega_1(|T-t_0|) + \omega_2(|T-t_0|) \geq \frac\delta8,
  \end{equation*}
  which implies
  \begin{equation*}
    |T-t_0|\geq \lambda > 0,
  \end{equation*}
  for some positive constant $\lambda$, provided $\sigma,\veps$ are small enough. The same argument as above can also give $|T-s_0|\geq \lambda > 0$.

  5). The finite differences between $t_0,s_0$ and $T$ finally allow us to employ the viscosity property. Consider the map $(t,\xi)\mapsto \tilde{G}(t,s_0,\xi,\zeta_0)$ has a maximum at $(t_0,\xi_0)$, i.e. $U_1-\psi$ has a maximum at $(t_0,\xi_0)$ for
  \begin{align*}
    \psi(t,\xi)\coloneqq &\,U_2(s_0,\zeta_0) - \sigma (t + s_0) + \veps(\|\xi\|_{L^2}^2+\|\zeta_0\|_{L^2}^2)+\frac{1}{\veps^2}((t-s_0)^2+\|\xi-\zeta_0\|_{L^2}^2) \\
    & + \eta_t t + \eta_s s_0 + \E[\eta_\xi\cdot \xi] + \E[\eta_{\zeta}\cdot \zeta_0].
  \end{align*}
  Since $U_1$ is a viscosity subsolution, using the subsolution property~\eqref{eq:sub_property}, we have
  \begin{equation}
    -\sigma + \frac{2(t-s_0)}{\veps^2} + \eta_t  + 
    \mathcal{H}(\xi_0,2\veps\xi_0 + \frac{2(\xi_0-\zeta_0)}{\veps^2} + \eta_{\xi})\geq 0.
    \label{eq:unique_H_ineq1}
  \end{equation}
  In the same way, consider the map $(s,\zeta)\mapsto -\tilde{G}(t_0,s,\xi_0,\zeta)$ has a minimum at $(s_0,\zeta_0)$, i.e. $U_2-\psi$ has a minimum at $(s_0,\zeta_0)$ for
  \begin{align*}
    \psi(t,\xi)\coloneqq &\,U_1(s_0,\zeta_0) + \sigma (t_0 + s) - \veps(\|\xi_0\|_{L^2}^2+\|\zeta\|_{L^2}^2) -
    \frac{1}{\veps^2}((t_0-s)^2+\|\xi_0-\zeta\|_{L^2}^2) \\
    & - \eta_t t_0 - \eta_s s - \E[\eta_\xi\cdot \xi_0] - \E[\eta_{\zeta}\cdot \zeta].
  \end{align*}
  Since $U_2$ is a viscosity supersolution, using the supersolution property~\eqref{eq:super_property}, we have
  \begin{equation}
    \sigma + \frac{2(t_0-s)}{\veps^2} - \eta_s + 
    \mathcal{H}(\zeta_0,-2\veps\zeta_0 + \frac{2(\xi_0-\zeta_0)}{\veps^2} - \eta_{\zeta})\geq 0.
    \label{eq:unique_H_ineq2}
  \end{equation}
  Computing the difference in the two inequalities~\eqref{eq:unique_H_ineq1},\eqref{eq:unique_H_ineq2} gives
  \begin{equation*}
    -2\sigma + \eta_t + \eta_s + \mathcal{H}(\xi_0,2\veps\xi_0 + \frac{2(\xi_0-\zeta_0)}{\veps^2} + \eta_{\xi}) - \mathcal{H}(\zeta_0,-2\veps\zeta_0 + \frac{2(\xi_0-\zeta_0)}{\veps^2} - \eta_{\zeta}) \geq 0.
  \end{equation*}
  Using estimates~\eqref{eq:unique_est1},~\eqref{eq:unique_est3} and Lemma \ref{lem:H_continuity}, we have
  \begin{align*}
    2\sigma \leq\,& \eta_t + \eta_s
    + \mathcal{H}(\zeta_0, -2\veps\zeta_0 + \frac{2(\xi_0-\zeta_0)}{\veps^2} - \eta_{\zeta})
    - \mathcal{H}(\xi_0, 2\veps\xi_0 + \frac{2(\xi_0-\zeta_0)}{\veps^2} + \eta_{\xi}) \\
    \leq\,& 2\veps 
    + |\mathcal{H}(\zeta_0, -2\veps\zeta_0 + \frac{2(\xi_0-\zeta_0)}{\veps^2} - \eta_{\zeta})
    - \mathcal{H}(\zeta_0, 2\veps\xi_0 + \frac{2(\xi_0-\zeta_0)}{\veps^2} + \eta_{\xi})| \\
    & + |\mathcal{H}(\zeta_0, 2\veps\xi_0 + \frac{2(\xi_0-\zeta_0)}{\veps^2} + \eta_{\xi})
    - \mathcal{H}(\xi_0,2\veps\xi_0 + \frac{2(\xi_0-\zeta_0)}{\veps^2} + \eta_{\xi})| \\
    \leq\,& 2\veps + K_L \| 2\veps\xi_0 + 2\veps\zeta_0 + \eta_{\xi} + \eta_{\zeta} \|_{L^2} \\
    & + K_L(1 + \| 2\veps\xi_0 + \frac{2(\xi_0-\zeta_0)}{\veps^2} + \eta_{\xi}\|_{L^2}) \| \xi_0 - \zeta_0 \|_{L^2} \\
    \leq\,& o(1) \quad (\veps \rightarrow 0^+).
  \end{align*}
  Therefore taking the limit gives us a contradiction $0 < \sigma \leq 0$, which completes the proof.
\end{proof}

Thm.~\ref{thm:value_viscosity} and~\ref{thm:value_uniqueness} establishes the well-posedness, in the viscosity sense, of the HJB equation and identifies the value function for the mean-field optimal control problem as the unique solution of the HJB equation. Moreover, it provides us (through solving the infimum in~\eqref{eq:HJB_dist} after solving for the value function) an optimal control policy, from which we can synthesize an optimal control as the solution of our learning problem. In this sense, the HJB equation gives us a necessary and sufficient condition for optimality of the learning problem~\eqref{eq:mean_field_ctrl_prob}. This demonstrate an essential observation from the mean-field optimal control viewpoint of deep learning: the population risk minimization problem of deep learning can be viewed as a variational problem, whose solution can be characterized by a suitably defined Hamilton-Jacobi-Bellman equation. This very much parallels classical calculus of variations. 

It is worth noting that the HJB equation is a global characterization of the value function, in the sense that it must in principle be solved over the entire space $\cP_2(\R^{d+l})$ of input-target distributions. Of course, we would not expect this to be the case in practice for any non-trivial machine learning problem. However, if we can solve it locally around some trajectories generated by the initial condition $\mu_0 \in \cP_2(\R^{d+l})$, then we would expect the obtained feedback control policy to apply to nearby input-label distributions as well. This may be able to give a principled way to perform transfer or one-shot learning~\cite{pan2010survey,glorot2011domain,li2006one}. 

Finally, observe that if the Hamiltonian defined in~\eqref{eq:lifted_H} is attained by a unique minimizer $\theta^*\in \Theta$ given any $\xi \in L^2(\Omega,\R^{d+l})$ and $P\in L^2(\Omega,\R^{d+l})$, then the uniqueness of value function immediately implies the uniqueness of the open-loop optimal control, which is sometimes a desired property of the population risk minimization problem. The following example gives such an instance.

\begin{example}
\label{ex:linear_with_theta}
  Consider a specific type of residual networks, where $f(x,\theta) = \theta \sigma(x)$ and $L(x,\theta)\propto \Vert \theta \Vert^2$. Here $\theta\in\R^{d\times d}$ is a matrix and $\sigma$ is a smooth and bounded non-linearity, e.g.,\,tanh or sigmoid. This is similar to conventional residual neural networks except that the order of the affine transformation and the non-linearity are swapped. In this case, the Hamiltonian defined in~\eqref{eq:lifted_H} admits a unique minimizer $\theta^*$ given any $\xi\in L^2(\Omega,\R^{d+l})$ and $P\in L^2(\Omega,\R^{d+l})$. 
\end{example}


\section{Mean-field Pontryagin's maximum principle}
\label{sec:mean_field_pmp}

As discussed in the earlier sections, the HJB equation provides us with a complete characterization of the optimality conditions for the population risk minimization problem~\eqref{eq:mean_field_ctrl_prob}. However, it has the disadvantage that it is global in $\cP(\R^{d+l})$ (or its lifted version, in $L^2(\Omega, \R^{d+l})$, and hence difficult to handle in practice. The natural question is whether we can have a local characterization of optimality, and by local we mean having no need for the optimality condition to depend on the whole space of input-label distributions. In this section, we provide such a characterization by proving a mean-field version of the celebrated Pontryagin's maximum principle (PMP)~\cite{boltyanskii1960theory}. Although seemingly disparate at first, we will discuss in Subsec.~\ref{sec:principle_connection} that the maximum principle approach is intimately connected with the dynamic programming approach introduced earlier.

In classical optimal control, such a local characterization is given in the form of the Pontryagin's maximum principle, where forward and backward Hamiltonian dynamics are coupled through a maximization condition. In the present formulation, a common control parameter is shared by all input-target pair values $(x_0,y_0)$ that can take under the distribution $\mu_0$. Thus, one expects that a maximum principle should exist in the average sense. Let us state and prove such a maximum principle below. We modify the assumptions (A1), (A2) to
\begin{itemize}
    \item[(A1$'$)] The function $f$ is bounded; $f,L$ are continuous in $\theta$; and $f,L,\Phi$ are continuously differentiable with respect to $x$.
    \item[(A2$'$)] The distribution $\mu_0$ has bounded support in $\R^d\times \R^l$, i.e.\,there exists $M>0$ such that $\mu (\{ (x,y) \in \R^d \times \R^l : \Vert x \Vert + \Vert y \Vert \leq M \}) = 1$. 
\end{itemize}

\begin{theorem}[Mean-field PMP]
\label{thm:mean_field_PMP}
	Let (A1$'$), (A2$'$) be satisfied and $\bftheta^*\in L^\infty([0,T],\Theta)$ be a solution of~\eqref{eq:mean_field_ctrl_prob} in the sense that $J(\bftheta^*)$ attains the infimum. Then, there exists absolutely continuous stochastic processes $\bfx^*,\bfp^*$ such that
	\begin{align}
        & \dot{x}^*_t = f(x^*_t, \theta^*_t),
        & & x^*_t = x_0, \label{eq:pmp_state} \\ 
        & \dot{p}^*_t = - \nabla_x H(x^*_t, p^*_t, \theta^*_t),
        & & p^*_T = -\nabla_x \Phi(x^*_T, y_0), \label{eq:pmp_costate} \\
		& \E_{\mu_0} H(x^*_t, p^*_t, \theta^*_t)
        \geq \E_{\mu_0} H(x^*_t, p^*_t, \theta),
        & &
        \forall\,\theta \in \Theta, \quad a.e.\,t\in[0,T], \label{eq:pmp_max}
	\end{align}
	where the Hamiltonian function $H: \R^d \times \R^d \times \Theta \rightarrow \R$ is given by 
  \begin{equation}
    \label{eq:pmp_H}
    H(x,p,\theta) = p\cdot f(x, \theta) - L(x, \theta).
  \end{equation}
\end{theorem}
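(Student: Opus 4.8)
The plan is to adapt the classical needle (spike) variation technique to the mean-field setting; the only essential wrinkle is that every estimate must first be established pathwise (for each realization of the random initial condition) and then integrated against $\mu_0$, with the shared deterministic control turning the pointwise maximum condition into one that holds in expectation. Fix a value $\theta \in \Theta$ and a time $\tau \in [0,T)$, and for small $\veps > 0$ introduce the perturbed control $\bftheta^\veps$ that equals $\theta$ on $[\tau, \tau+\veps]$ and equals $\bftheta^*$ elsewhere. Since $\bftheta^\veps$ is admissible and $\bftheta^*$ is optimal, $J(\bftheta^\veps) - J(\bftheta^*) \geq 0$, and the whole proof amounts to computing the right-derivative of this difference at $\veps = 0^+$ and reading off its sign.

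First I would define the state process $\bfx^*$ as the pathwise solution of the forward ODE~\eqref{eq:pmp_state} and the costate $\bfp^*$ as the pathwise solution of the linear backward ODE~\eqref{eq:pmp_costate}. Under (A1$'$) and the bounded-support hypothesis (A2$'$), the boundedness of $f$ keeps every forward trajectory $x^*_t$ inside a fixed compact set, so $\nabla_x f$, $\nabla_x L$, $\nabla_x \Phi$ are bounded along the optimal trajectory uniformly in $\omega$; this yields existence, uniqueness, and uniform bounds for $\bfp^*$. Writing $x^\veps_t$ for the perturbed state, a Gronwall argument gives pathwise the first-order expansion $x^\veps_t = x^*_t + \veps\, v_t + o(\veps)$ for $t \geq \tau+\veps$, where the variation $v_t$ solves the linearized equation $\dot{v}_t = \nabla_x f(x^*_t,\theta^*_t)\, v_t$ with the jump datum $v_\tau = f(x^*_\tau,\theta) - f(x^*_\tau,\theta^*_\tau)$, valid whenever $\tau$ is a Lebesgue point of $s \mapsto f(x^*_s,\theta^*_s)$. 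The crucial algebraic step is the identity $\frac{d}{dt}(p^*_t \cdot v_t) = \nabla_x L(x^*_t,\theta^*_t)\cdot v_t$ on $(\tau,T]$, which follows directly from the costate and variational equations because the two Jacobian contributions cancel by transpose symmetry. Integrating from $\tau$ to $T$ and inserting the terminal condition $p^*_T = -\nabla_x \Phi(x^*_T,y_0)$ collapses the terminal and running-cost sensitivities into the single pathwise quantity $-p^*_\tau \cdot v_\tau$.

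Assembling these pieces, the right-derivative of the cost becomes $\lim_{\veps\to 0^+}\veps^{-1}(J(\bftheta^\veps)-J(\bftheta^*)) = \E_{\mu_0}[H(x^*_\tau,p^*_\tau,\theta^*_\tau) - H(x^*_\tau,p^*_\tau,\theta)]$: the explicit length-$\veps$ running-cost contribution on $[\tau,\tau+\veps]$ supplies the $L(x^*_\tau,\theta)-L(x^*_\tau,\theta^*_\tau)$ piece, while the adjoint identity supplies the $p^*_\tau \cdot (f(x^*_\tau,\theta)-f(x^*_\tau,\theta^*_\tau))$ piece, and together they reconstruct the Hamiltonian $H = p\cdot f - L$ of~\eqref{eq:pmp_H}. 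Optimality forces this limit to be nonnegative, giving $\E_{\mu_0}H(x^*_\tau,p^*_\tau,\theta^*_\tau) \geq \E_{\mu_0}H(x^*_\tau,p^*_\tau,\theta)$ for each fixed $\theta$ and each Lebesgue point $\tau$. To upgrade this to the simultaneous-in-$\theta$ statement~\eqref{eq:pmp_max}, I would take a countable dense subset of $\Theta$, intersect the corresponding full-measure sets of admissible $\tau$, and then extend to all $\theta \in \Theta$ using the continuity of $f$ and $L$ in $\theta$ guaranteed by (A1$'$).

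The main obstacle I anticipate is justifying the interchange of the limit $\veps \to 0^+$ with the expectation $\E_{\mu_0}$, that is, promoting the pathwise first-order expansions to a first-order expansion of $J = \E_{\mu_0}[\cdots]$ itself. This is exactly where (A2$'$) earns its keep: bounded support of $\mu_0$ together with the boundedness of $f$ confines all forward trajectories to a compact set on which the relevant gradients are bounded, so the $o(\veps)$ remainders in the pathwise expansions are dominated uniformly in $\omega$ and dominated convergence applies. A secondary technical point is that the needle construction requires $\tau$ to be a common Lebesgue point of the (now expectation-valued) integrands appearing above; since a countable family of $L^1$ functions shares a full-measure set of common Lebesgue points, this introduces no real difficulty but must be stated with care.
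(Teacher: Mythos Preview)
Your proposal is correct and follows essentially the same needle-variation argument as the paper: perturb $\bftheta^*$ by a spike, derive the variational equation for $v_t$, introduce the costate $p^*_t$ as the adjoint, and read off the Hamiltonian inequality from the sign of the first-order cost variation under dominated convergence. The only cosmetic difference is that the paper first absorbs the running cost $L$ into an auxiliary state coordinate (so that $L\equiv 0$ and $\tfrac{d}{dt}(p^*_t\cdot v_t)=0$), whereas you keep $L$ explicit and use the identity $\tfrac{d}{dt}(p^*_t\cdot v_t)=\nabla_x L(x^*_t,\theta^*_t)\cdot v_t$; both routes are standard and yield the same conclusion.
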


\begin{proof}
  To simplify the proof we first make a substitution by introducing a new coordinate $x^0$ satisfying the dynamics
  $\dot{x}^0_t = L(x_t,\theta_t)$ with $x^0_0=0$. Then, it is clear that the PMP above can be transformed into one without running loss by redefining
  \begin{align*}
      x \rightarrow (x^0, x), \quad
      f \rightarrow (L, f), \quad
      \Phi(x_T, y_0) &\rightarrow \Phi(x_T, y_0) + x^0_T.
  \end{align*}
  Check that (A1$'$), (A2$'$) are preserved but now we can consider without loss of generality the case $L\equiv 0$.

  Let some $\tau \in (0,T]$ be a Lebesgue point of $\hat{f}(t)\coloneqq f(x^*_t,\theta^*_t)$. By assumptions (A1$'$) and (A2$'$)  these points are dense in $[0,T]$. Now, for $\epsilon\in(0,\tau)$, define the family of perturbed controls
  \begin{align*}
      \theta^{\tau,\epsilon}_t = 
      \begin{cases}
          \omega & t\in[\tau-\epsilon, \tau], \\
          \theta^*_t & \text{otherwise}.
      \end{cases}
  \end{align*}
  where $\omega\in\Theta$. This is a ``needle'' perturbation. Accordingly, define $x^{\tau,\epsilon}_t$ by
  \begin{align*}
      x^{\tau,\epsilon}_t = x_0 + \int_{0}^{t} f(x^{\tau,\epsilon}_s, \theta^{\tau,\epsilon}_s) ds.
  \end{align*}
  i.e.\,solution of the forward propagation equation with the perturbed control $\theta^{\tau,\epsilon}$. It is clear that $x^*_t = x^{\tau,\epsilon}_t$ for every $t<\tau-\epsilon$ and every $x_0$, since the perturbation is not present. At $t=\tau$, we have
  \begin{align*}
      \frac{1}{\epsilon} (x^{\tau,\epsilon}_\tau - x^*_\tau)
      &= \frac{1}{\epsilon} \int_{\tau-\epsilon}^{\tau}
      f(x^{\tau,\epsilon}_s,\omega) - f(x^*_s,\theta^*_s) ds.
  \end{align*}
  Since $\tau$ is Lebesgue point of $F$, we have
  \begin{align*}
      v_\tau \coloneqq \lim_{\epsilon\downarrow 0} \frac{1}{\epsilon} (x^{\tau,\epsilon}_\tau - x^*_\tau)
      = f(x^{*}_\tau,\omega) - f(x^*_\tau,\theta^*_\tau).
  \end{align*}
  Here, $v_\tau$ represents the leading order perturbation on the state due to the ``needle'' perturbation introduced in the infinitesimal interval $[\tau-\epsilon,\tau]$. 
  For the rest of the time interval $(\tau,T]$, the dynamics remain the same since the controls are the same. It remains to compute how the perturbation $v_\tau$ propagates. 
  Define for $t\geq\tau$, $v_{t}^\epsilon \coloneqq \frac{1}{\epsilon} (x^{\tau,\epsilon}_t - x^*_t)$
  and $v_t \coloneqq \lim_{\epsilon\downarrow 0} v^\epsilon_t$.
  By Theorem 2.3.1 of~\cite{bressan2007introduction}, we know that $v_t$ is well defined for almost every $t$ (all the Lebesgue points of the map $t\mapsto x^*(t)$) and satisfies the following linearized equation:
  \begin{align}
      \begin{split}
          \dot{v}_t &= \nabla_x f(x^*_t, \theta^*_t)^T v_t, \qquad t\in(\tau,T], \\
          v_\tau &= f(x^*_\tau,\omega) - f(x^*_\tau,\theta^*_\tau).
      \end{split}
      \label{eq:variational_eqn}
  \end{align}
  In particular, $v(T)$ represents the perturbation of the final state introduced by this control. By the optimality assumption of $\bftheta^*$, we must have
  \begin{align*}
      \E_{\mu_0} 
      \Phi(x^{\tau,\epsilon}_T, y_0)
      \geq \E_{\mu_0} \Phi(x^*_T, y_0). 
  \end{align*}
  Assumption (A1$'$) and (A2$'$) implies $\nabla_x \Phi$ is bounded so by dominated convergence theorem,
  \begin{align}
      \begin{split}
          0 &\leq \lim_{\epsilon\downarrow 0} 
          \frac{1}{\epsilon} \E_{\mu_0} 
          [\Phi(x^{\tau,\epsilon}_T,y_0) - \Phi(x^*_T,y_0)] \\
          &= \E_{\mu_0} \frac{d}{d\epsilon} \Phi(x^{\epsilon,\tau}_T, y_0)\Big\vert_{\epsilon=0^+} \\
          &= \E_{\mu_0} \nabla_x \Phi(x^*_T, y_0) \cdot v_T . 
      \end{split}
      \label{eq:final_opt_cond}
  \end{align}

  Now, let us define $\bfp^*$ to be the solution of the adjoint of Eq~\eqref{eq:variational_eqn},
  \begin{align*}
      \dot{p}^*_t = - \nabla_x f(x^*_s, \theta^*_s) p^*_t, \quad p^*_T = -\nabla_x \Phi(x^*_T,y_0).
  \end{align*}
  Then,~\eqref{eq:final_opt_cond} implies $\E_{\mu_0} p^*_T \cdot v_T \leq 0$. Moreover, we have
  \begin{align*}
      \frac{d}{dt} (p^*_t \cdot v_t) = \dot{p}^*_t \cdot v_t + \dot{v}_t \cdot p^*_t = 0
  \end{align*}
  for all $t\in[\tau,T]$. Thus, we must have $\E_{\mu_0} p^*_t\cdot v_t = \E_{\mu_0} p^*_T\cdot v_T \leq 0$ for all $t\in[\tau,T]$ and so for $t=\tau$ (with initial condition in~\eqref{eq:variational_eqn}),
  \begin{align*}
      \E_{\mu_0} p^*_\tau \cdot f(x^*_\tau,\theta^*_\tau)
      \geq 
      \E_{\mu_0} p^*_\tau \cdot f(x^*_\tau,\omega). 
  \end{align*}
  Since $\omega\in\Theta$ is arbitrary, this completes the proof by recalling that $H(x,p,\theta) = p\cdot f(x,\theta)$. 
\end{proof}

\begin{remark}
  In fact, one can show, under slightly stronger conditions (bounded first partial derivatives) that $\E_{\mu_0} H(x^*_t,p^*_t,\theta^*_t)$ is constant in time, using standard techniques (see e.g.,\, Sec. 4.2.9 of~\cite{liberzon2012calculus}).
\end{remark}

Let us now discuss the mean-field PMP. First, notice that it is a necessary condition, and hence is much weaker than the HJB characterization. Also, the PMP refers only to the open-loop control process $\bftheta$ with no explicit reference to an optimal control policy. Now, since the PMP is a necessary condition, we should discuss its relationship with classical necessary conditions in optimization. Equation~\eqref{eq:pmp_state} is simply the feed-forward ODE~\eqref{eq:cts_forward} under the optimal parameters $\bftheta^*$. On the other hand, Eq.~\eqref{eq:pmp_costate} defines the evolution of the co-state $p_s^*$. To draw analogy with constrained optimization, the co-state can be regarded as Lagrange multipliers which enforce the ODE constraint~\eqref{eq:cts_forward}. However, as in the proof of Thm.~\ref{thm:mean_field_PMP}, it may be more general to interpret it as the evolution of an adjoint variational condition backwards in time. 
The Hamiltonian maximization condition~\eqref{eq:pmp_max} is a unique feature of PMP-type statements, in that it does not characterize optimality in terms of vanishing of first order partial derivatives, as is the case in usual first order optimality conditions. Instead, optimal solutions must globally maximize the Hamiltonian function. This feature allows greater applicability since we can also deal with the case where the dynamics are not differentiable with respect to the controls/training weights, or when the optimal controls/training weights lie on the boundary of the set $\Theta$. Moreover, the usual first order optimality conditions and the celebrated back-propagation algorithm can be readily derived from the PMP, see~\cite{li2017maximum}. We note that compared to classical statements of the PMP~\cite{pontryagin1987mathematical}, the main difference in our result is the presence of the expectation over $\mu_0$ in the Hamiltonian maximization condition~\eqref{eq:pmp_max}. This is to be expected since the mean-field optimal control must depend on the distribution of input-target  pairs. 

We conclude the discussion by noting that the PMP above can be written more compactly as follows. For each control process $\bftheta \in L^\infty([0,T],\Theta)$, denote by $\bfx^\bftheta\coloneqq\{ x^\bftheta_t : 0\leq t\leq T\}$ and $\bfp^\bftheta\coloneqq\{ p^\bftheta_t : 0\leq t\leq T\}$ the solution of the Hamilton's equations~\eqref{eq:pmp_state} and~\eqref{eq:pmp_costate} using this control with the random variables $(x_0,y_0)\sim \mu_0$, i.e.
\begin{align}
  \begin{split}
      &\dot{x}^\bftheta_t = f(x^\bftheta_t, \theta_t),
      \qquad x^\bftheta_0 = x_0, \\ 
      &\dot{p}^\bftheta_t = -\nabla_x H(x^\bftheta_t, p^\bftheta_t, \theta_t),
      \qquad p^\bftheta_T = -\nabla_x \Phi(x^\bftheta_T, y_0).
  \end{split}
  \label{eq:compact_PMP_ivps}
\end{align}
Then, $\bftheta^*$ satisfies the PMP if and only if
\begin{align}
  \E_{\mu_0} H(x^{\bftheta^*}_t, p^{\bftheta^*}_t, \theta^*_t)
  \geq 
  \E_{\mu_0} H(x^{\bftheta^*}_t, p^{\bftheta^*}_t, \theta), 
  \quad \forall\,\theta \in \Theta.
  \label{eq:compact_PMP_max}
\end{align}
Furthermore, observe that the mean-field PMP derived above includes, as a special case, the necessary conditions for optimality for the sampled optimal control problem~\eqref{eq:sampled_ctrl_prob}. To see this, simply define the empirical measure $\mu^N_0 \coloneqq \frac{1}{N} \sum_{i=1}^{N} \delta_{(x^i_0,y^i_0)}$ and apply the mean-field PMP (Thm.~\ref{thm:mean_field_PMP}) with $\mu^N_0$ in place of $\mu_0$ to give
\begin{align}
  \frac{1}{N} \sum_{i=1}^{N}
  H(x^{\bftheta^*,i}_t, p^{\bftheta^*,i}_t, \theta^*_t)
  \geq 
  \frac{1}{N}\sum_{i=1}^N 
  H(x^{\bftheta^*,i}_t, p^{\bftheta^*,i}_t, \theta), 
  \quad \forall\,\theta \in \Theta,
  \label{eq:compact_PMP_max_sampled}
\end{align}
where each $\bfx^{\bftheta,i}$ and $\bfp^{\bftheta,i}$ are defined as in~\eqref{eq:compact_PMP_ivps}, but with the input-target pair $(x^i_0,y^i_0)$. Of course, since $\mu^N_0$ is a random measure, this is a random equation whose solution are random variables. 

\subsection{Connection between the HJB equation and the PMP}
\label{sec:principle_connection}

We now discuss some concrete connections between the HJB equation and the PMP, thus justifying our claim that the PMP can be understood as a local result compared to the global characterization of the HJB equation. 

It should be noted that the Hamiltonian defined in Pontryagin's maximum principle~\eqref{eq:pmp_H} is different from~\eqref{eq:lifted_H} in the HJB equation, due to different sign conventions in these two approaches of classical optimal control. We choose to keep this difference such that readers familiar with classical control theory can draw an analogy easily. Nevertheless, if one replaces $p,L,f$ in~\eqref{eq:pmp_H} by $-P,-\bar{L}, \bar{f}$ respectively and takes the infimum over $\Theta$ instead of the maximum condition in~\eqref{eq:pmp_max}, one formally obtains the negative of~\eqref{eq:lifted_H}. 

Now, our goal is to show that the HJB and PMP are more intimately connected than it appears in the definition of Hamiltonian. The deeper connections originate from the link between Hamilton's canonical equations (ODEs) and Hamilton-Jacobi equations (PDEs), of which we give an informal description as follows.

First, note that although the Hamiltonian dynamics~\eqref{eq:pmp_state} and~\eqref{eq:pmp_costate} describe the trajectory of particular random variables (completely determined by $(x_0,y_0)$), the optimality conditions are not dependent on the particular representation of the probability measures by these random variables. In other words, we could also formulate a maximum principle whose Hamiltonian flow is that on measures in a Wasserstein space, from which the above PMP can be seen as a ``lifting''. This approach would parallel the developments in the previous sections on the HJB equations. However, here we choose to establish and analyze the PMP in the lifted space due to the simplicity of having well-defined evolution equations. The corresponding evolution of measures would require more technical analysis while not being particularly more elucidating. Instead, we shall establish the connections by also lifting the HJB equation into $L^2(\Omega,\R^{d+l})$.

Consider the lifted HJB equation~\eqref{eq:HJB_lifted} in $L^2(\Omega,\R^{d+l})$. The key observation is that we can apply the method of characteristics (see e.g., Ch.~3.2 of~\cite{evans1998partial}) by defining $P_t=DV(t,\xi_t)$ and write down the characteristic evolution equations:
\begin{equation}
\begin{cases}
    \dot{\xi}_t = D_{P}\cH(\xi_t,P_t), \\
    \dot{P}_t = -D_{\xi}\cH(\xi_t,P_t).
\end{cases}
\label{eq:L2_LiftHamiltonSys}
\end{equation}

Suppose this system has a solution satisfying boundary conditions $\P_{\xi_0}=\mu_0, P_T=\nabla_w\bar{\Phi}(\xi_T)$, where the second condition comes from the terminal condition of~\eqref{eq:HJB_lifted}. To avoid technicalities, we further assume that the infimum in~\eqref{eq:lifted_H} is attained at $\theta^\dagger(\xi,P)$, which is always an interior point of $\Theta$. Hence~\eqref{eq:lifted_H} can be explicitly written down as

\begin{equation*}
    \cH = \E[P\cdot \bar{f}(\xi,\theta^\dagger(\xi,P))+\bar{L}(\xi,\theta^\dagger(\xi,P))],
\end{equation*}
and by first order condition we have
\begin{equation*}
    \E\left[\nabla_{\theta}\bar{f}(\xi,\theta^\dagger(\xi,P))P + \nabla_{\theta}\bar{L}(\xi,\theta^\dagger(\xi,P))\right]=0.
\end{equation*}
Plugging the above two equalities into~\eqref{eq:L2_LiftHamiltonSys} gives us
\begin{equation*}
\begin{cases}
    \dot{\xi}_t = \bar{f}(\xi_t,\theta^\dagger(\xi_t,P_t)), \\
    \dot{P}_t = -\nabla_w\bar{f}(\xi_t,\theta^\dagger(\xi_t,P_t))P_t -  \nabla_w\bar{L}(\xi_t,\theta^\dagger(\xi_t,P_t)).
\end{cases}
\end{equation*}

Let $\theta^*_t=\theta^\dagger(\xi_t,P_t)$. Note that $w=(x,y)$ is the concatenated variable and the last $l$ components of $\bar{f}$ are zero. If we only consider the first $d$ components, then we can deduce the $d-$dimensional dynamical system in $L^2(\Omega,\R^d)$:

\begin{equation}
\begin{cases}
    \dot{x}_t = f(\xi_t,\theta_t^*), \\
    \dot{p}_t = -\nabla_xf(x_t,\theta^*_t)p_t -  \nabla_x L(x_t,\theta^*_t).
\end{cases}
\label{eq:L2_HamiltonSys}
\end{equation}
If we make the transformation $p\rightarrow -p$ in Thm.~\ref{thm:mean_field_PMP}, it is straightforward to see that the deduced dynamical system by Thm.~\ref{thm:mean_field_PMP} satisfies~\eqref{eq:L2_HamiltonSys} in $L^2(\Omega,\R^d)$ and the boundary conditions are matched.

In summary, the Hamilton's equations~\eqref{eq:L2_HamiltonSys} in the PMP can be viewed as the characteristic equations for the HJB equation~\eqref{eq:HJB_lifted}. Consequently, the PMP pinpoints the necessary condition a characteristic of the HJB equation originating from (a random variable with law) $\mu_0$ must satisfy. This justifies the preceding claim that the PMP constitutes a local optimality condition as compared to the HJB equation.

\section{Small-time uniqueness}
\label{sec:small_T_uniqueness}

As discussed, the PMP constitute necessary conditions for optimality. A natural question is when are the PMP solutions also sufficient for optimality (See~\cite{bressan2007introduction}, Ch. 8 for some discussions on sufficiency). One simple case where it is sufficient, assuming an optimal solution exists, is when the PMP equations admit a unique solution. In this section, we investigate the uniqueness properties of the PMP system. 

Note that even if there exists a unique solution $\theta^\dagger(\nu)$ of the Hamiltonian maximization $\argmax_\theta \E_{(x,p)\sim\nu}H(x,p,\theta)$ for any $\P_{(x,p)}$, the equation~\eqref{eq:compact_PMP_ivps} reduces to a highly non-linear two-point boundary value problem for $\bfx^*, \bfp^*$, further coupled with their laws. Even without the coupling to laws, such two-point boundary value problems are known to not have unique solutions in general (see e.g., Ch. 7 of~\cite{kelley2010theory}). In the following, we shall show that if $T$ is sufficiently small and $H$ is strongly concave, then the PMP admits a unique solution. Hereafter, we retain assumption (A2$'$) and replace (A1$'$) with a stronger assumption, which greatly simplifies our arguments:
\begin{itemize}
    \item[(A1$''$)] $f$ is bounded; $f,L,\Phi$ are twice continuously differentiable with respect to both $x,\theta$, with bounded and Lipschitz partial derivatives. 
\end{itemize}

With an estimate of the difference in flow maps due to two different controls, we can prove a small-time uniqueness result for the PMP. 
\begin{theorem}
\label{thm:small_T_uniqueness}
  Suppose that $H(x,p,\theta)$ is strongly concave in $\theta$, uniformly in $x,p\in\R^d$, i.e.\,$\nabla^2_{xx}H(x,p,\theta) + \lambda_0 I \preceq 0$ for some $\lambda_0>0$. Then, for sufficiently small $T$, if $\bftheta^1$ and $\bftheta^2$ are solutions of the PMP~\eqref{eq:compact_PMP_max} 
  then $\bftheta^1  = \bftheta^2$.
\end{theorem}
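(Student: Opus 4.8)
The plan is to suppose that two PMP solutions $\bftheta^1,\bftheta^2$ exist, with associated state/co-state pairs $(\bfx^1,\bfp^1),(\bfx^2,\bfp^2)$ solving~\eqref{eq:compact_PMP_ivps}, and to show that $D\coloneqq\|\bftheta^1-\bftheta^2\|_{L^\infty}$ must vanish once $T$ is small. The argument chains together three ingredients: uniform a priori bounds on the trajectories, a Gronwall flow-map estimate, and a strong-concavity estimate that converts closeness of the $(x,p)$-trajectories into closeness of the maximizing controls. First I would establish the a priori bounds. Since $f$ is bounded by (A1$''$) and $\mu_0$ has support in a ball of radius $M$ by (A2$'$), every realization obeys $\|x^i_t\|\le M+T\|f\|_\infty$. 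Substituting this into the co-state equation $\dot p^i_t=-\nabla_x H(x^i_t,p^i_t,\theta^i_t)=-(\nabla_x f)^\top p^i_t+\nabla_x L$ together with the bounded terminal data $p^i_T=-\nabla_x\Phi(x^i_T,y_0)$, a backward Gronwall estimate gives $\|p^i_t\|\le C$ uniformly. On the resulting compact set, (A1$''$) guarantees that $f$, $\nabla_x H$ and $\nabla_\theta H$ are globally Lipschitz in $(x,p,\theta)$ along the trajectories, with constants independent of the realization.

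Second I would derive the flow-map estimate. Writing $\delta x_t=x^1_t-x^2_t$ and $\delta p_t=p^1_t-p^2_t$, the forward equation yields $\|\delta x_t\|\le K\int_0^t(\|\delta x_s\|+\|\theta^1_s-\theta^2_s\|)\,ds$, and since $\delta x_0=0$, Gronwall gives $\E_{\mu_0}\sup_t\|\delta x_t\|\le C_1 T\,D$. The co-state difference satisfies a backward equation driven by $\nabla_x H(x^1_t,p^1_t,\theta^1_t)-\nabla_x H(x^2_t,p^2_t,\theta^2_t)$, which is bounded by $K(\|\delta x_t\|+\|\delta p_t\|+\|\theta^1_t-\theta^2_t\|)$, with terminal mismatch $\|\delta p_T\|\le K\|\delta x_T\|$. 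A second, backward Gronwall estimate that absorbs the already-controlled $\delta x$ terms gives $\E_{\mu_0}\sup_t\|\delta p_t\|\le C_2 T\,D$, so that $\E_{\mu_0}[\|\delta x_t\|+\|\delta p_t\|]\le C_3 T\,D$ for every $t$.

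The crucial and most delicate step is the third: converting trajectory closeness into control closeness through strong concavity. For each $t$, the Hamiltonian maximization~\eqref{eq:compact_PMP_max} says $\theta^1_t$ maximizes $g(\theta)\coloneqq\E_{\mu_0}H(x^1_t,p^1_t,\theta)$ and $\theta^2_t$ maximizes $h(\theta)\coloneqq\E_{\mu_0}H(x^2_t,p^2_t,\theta)$ over $\Theta$, and both $g,h$ are $\lambda_0$-strongly concave because $H$ is. Combining the first-order variational inequalities at the two maximizers — valid since $\Theta$ is convex, in particular when $\Theta=\R^m$ so the maximizer is interior — with strong concavity produces the \emph{linear} bound $\lambda_0\|\theta^1_t-\theta^2_t\|\le\|\nabla g(\theta^2_t)-\nabla h(\theta^2_t)\|=\|\E_{\mu_0}[\nabla_\theta H(x^1_t,p^1_t,\theta^2_t)-\nabla_\theta H(x^2_t,p^2_t,\theta^2_t)]\|\le K\,\E_{\mu_0}[\|\delta x_t\|+\|\delta p_t\|]$. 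It is essential that this estimate be linear in the trajectory gap rather than quadratic: a naive two-sided maximization argument yields only $\lambda_0\|\theta^1_t-\theta^2_t\|^2\le C\,\E_{\mu_0}[\|\delta x_t\|+\|\delta p_t\|]$, which bounds $D$ but does not force it to zero.

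Finally I would combine. Substituting the flow-map estimate into the linear bound gives $\lambda_0\|\theta^1_t-\theta^2_t\|\le K C_3 T\,D$ for all $t$; taking the supremum over $t$ yields $\lambda_0 D\le K C_3 T\,D$. Choosing $T$ small enough that $K C_3 T<\lambda_0$ forces $D=0$, hence $\bftheta^1=\bftheta^2$. I expect the main obstacle to be twofold: the honest bookkeeping in the backward co-state Gronwall estimate, which is coupled to $\delta x$ and inherits a state-dependent terminal mismatch, and, more conceptually, ensuring the concavity step delivers a genuinely linear estimate in the trajectory gap, since this linearity is precisely what turns the small-$T$ scaling into a contraction and closes the uniqueness argument.
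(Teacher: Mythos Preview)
Your proposal is correct and follows essentially the same architecture as the paper: a flow-map Lipschitz estimate (the paper isolates this as Lemma~\ref{lem:small_T_xp_lipschitz}) combined with a strong-concavity comparison of the two maximizers, yielding a self-referential inequality for $D=\|\bftheta^1-\bftheta^2\|_{L^\infty}$ that forces $D=0$ when $T$ is small.

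The only noteworthy difference is in the concavity step. You extract the \emph{linear} bound $\lambda_0\|\theta^1_t-\theta^2_t\|\le\|\nabla g(\theta^2_t)-\nabla h(\theta^2_t)\|$ via a one-sided argument (use the variational inequality for $h$ at $\theta^2_t$, then strong concavity of $g$). The paper instead writes the monotonicity inequality for each of $g$ and $h$ separately, adds them, and regroups the four gradient terms as $[\nabla_\theta H(x^1,p^1,\theta^2)-\nabla_\theta H(x^2,p^2,\theta^2)]+[\nabla_\theta H(x^2,p^2,\theta^1)-\nabla_\theta H(x^1,p^1,\theta^1)]$, which after Cauchy--Schwarz gives $\lambda_0\|\theta^1_t-\theta^2_t\|^2\le K_L\|\delta\bftheta\|_{L^\infty}(\|\delta\bfx\|_{L^\infty}+\|\delta\bfp\|_{L^\infty})$. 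So the ``two-sided'' route you warn against does in fact close, once the terms are grouped to compare gradients at the \emph{same} $\theta$ but different $(x,p)$; your caution applies only to the cruder version that compares Hamiltonian \emph{values} rather than gradients. Either variant feeds into the flow-map estimate identically and gives the same contraction $D\le (K_L/\lambda_0)C(T)D$.
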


Note that since we are considering the effects of $T$, in the rest of the estimates in this section, the dependence of constants on $T$ are explicitly considered. We first estimate the difference of flow-maps driven by two different controls.

\begin{lemma}
\label{lem:small_T_xp_lipschitz}
  Let $\bftheta^1, \bftheta^2\in L^\infty([0,T],\Theta)$. Then, there exists a constant $T_0$ such that for all $T\in[0,T_0)$, we have 
  \begin{align*}
      \Vert \bfx^{\bftheta^1} - \bfx^{\bftheta^2} \Vert_{L^\infty} 
      + \Vert \bfp^{\bftheta^1} - \bfp^{\bftheta^2} \Vert_{L^\infty}
      \leq C(T) \Vert \bftheta^1 - \bftheta^2 \Vert_{L^\infty}. 
  \end{align*}
  where $C(T)>0$ satisfies $C(T)\rightarrow 0$ as $T\rightarrow 0$.
\end{lemma}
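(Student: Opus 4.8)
The plan is to obtain the estimate through two successive Gr\"onwall arguments---one for the forward state equation~\eqref{eq:compact_PMP_ivps} and one for the backward co-state equation---while carefully tracking the explicit powers of $T$ that force the prefactor $C(T)$ to vanish as $T\to 0$. All constants $K$ arising from (A1$''$) are deterministic, and under (A2$'$) the initial data $x_0,y_0$ have bounded support, so every estimate may be carried out pathwise (for each realization of $(x_0,y_0)$) and the $L^\infty$ norm recovered by taking the essential supremum over time and $\omega$ at the end.

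First I would treat the forward equation. Writing $\Delta x_t = x^{\bftheta^1}_t - x^{\bftheta^2}_t$, I split the difference of the vector fields as
\begin{align*}
f(x^{\bftheta^1}_t,\theta^1_t) - f(x^{\bftheta^2}_t,\theta^2_t) &= [f(x^{\bftheta^1}_t,\theta^1_t) - f(x^{\bftheta^2}_t,\theta^1_t)] \\
&\quad + [f(x^{\bftheta^2}_t,\theta^1_t) - f(x^{\bftheta^2}_t,\theta^2_t)],
\end{align*}
bounding the first bracket by $K\|\Delta x_t\|$ (Lipschitz in $x$) and the second by $K\|\theta^1_t-\theta^2_t\|$ (since $\nabla_\theta f$ is bounded under (A1$''$), $f$ is Lipschitz in $\theta$). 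Integrating from $0$ and applying Gr\"onwall's inequality yields $\|\bfx^{\bftheta^1}-\bfx^{\bftheta^2}\|_{L^\infty}\le KTe^{KT}\|\bftheta^1-\bftheta^2\|_{L^\infty}$; the crucial feature is the explicit factor $T$ produced by integrating the control perturbation over $[0,T]$.

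Next I would handle the co-state, after recording a uniform a priori bound $\|\bfp^{\bftheta}\|_{L^\infty}\le C$ valid for $T\le T_0$: since $\dot p^{\bftheta}_t = -\nabla_x f(x^{\bftheta}_t,\theta_t)^T p^{\bftheta}_t + \nabla_x L(x^{\bftheta}_t,\theta_t)$ has bounded coefficients and bounded terminal data $p^{\bftheta}_T = -\nabla_x\Phi(x^{\bftheta}_T,y_0)$, a backward Gr\"onwall estimate supplies such a bound. This boundedness is essential because $\nabla_x H = \nabla_x f^T p - \nabla_x L$ depends on $p$. Setting $\Delta p_t = p^{\bftheta^1}_t - p^{\bftheta^2}_t$, the terminal difference obeys $\|\Delta p_T\|\le K\|\Delta x_T\|$ by Lipschitz continuity of $\nabla_x\Phi$, and splitting $\nabla_x H(x^{\bftheta^1}_s,p^{\bftheta^1}_s,\theta^1_s)-\nabla_x H(x^{\bftheta^2}_s,p^{\bftheta^2}_s,\theta^2_s)$ across its three arguments, using that $\nabla_x f,\nabla_x L$ are Lipschitz and that $p$ is bounded, controls the integrand by $K(\|\Delta x_s\|+\|\Delta p_s\|+\|\theta^1_s-\theta^2_s\|)$. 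A backward Gr\"onwall inequality, combined with the forward bound for $\|\Delta x\|$ (already $O(T)\|\bftheta^1-\bftheta^2\|_{L^\infty}$) and the factor $T$ from integrating the $\theta$-difference, gives $\|\bfp^{\bftheta^1}-\bfp^{\bftheta^2}\|_{L^\infty}\le C(T)\|\bftheta^1-\bftheta^2\|_{L^\infty}$ with $C(T)=O(T)$. Adding the two estimates delivers the claim.

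The routine part is the pair of Gr\"onwall estimates; the only genuine point requiring care is the bookkeeping that makes every term carry at least one factor of $T$ (or an already-$O(T)$ factor such as $\|\Delta x\|$), which is precisely what forces $C(T)\to 0$ rather than merely $C(T)$ bounded. Thus the main obstacle is not analytic difficulty but ensuring that the $\theta$-perturbation always appears under a time integral over $[0,T]$, and that the co-state bound is uniform in the controls so that the Lipschitz constant multiplying $\|\Delta p\|$ does not itself blow up; the boundedness of $f$ and of all first partial derivatives under (A1$''$) is exactly what guarantees this.
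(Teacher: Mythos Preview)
Your argument is correct and follows the same overall scheme as the paper---an estimate for the forward flow followed by one for the backward co-state, each picking up a factor of $T$. The one genuine technical difference is how the $T$-factor is extracted. You apply Gr\"onwall's inequality, obtaining $\Vert\bfdelta\bfx\Vert_{L^\infty}\le KTe^{KT}\Vert\bfdelta\bftheta\Vert_{L^\infty}$, which in fact works for \emph{all} $T>0$. The paper instead takes the essential supremum in $t$ directly to get the self-referential bound $\Vert\bfdelta\bfx\Vert_{L^\infty}\le K_LT\Vert\bfdelta\bfx\Vert_{L^\infty}+K_LT\Vert\bfdelta\bftheta\Vert_{L^\infty}$ and then rearranges algebraically; this is precisely the origin of the threshold $T_0=1/K_L$ in the lemma statement. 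Your Gr\"onwall route avoids that restriction at the cost of a (harmless) exponential factor, so the two arguments trade a smallness hypothesis for an exponential constant.

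You are also slightly more careful than the paper on one point: you correctly record the need for a uniform a~priori bound on $\Vert\bfp^{\bftheta}\Vert_{L^\infty}$, since $\nabla_x H=\nabla_x f^{T}p-\nabla_x L$ has a Lipschitz constant in $(x,\theta)$ that depends on $\Vert p\Vert$. The paper's proof simply writes the co-state increment as bounded by $K_L(\Vert\delta x_s\Vert+\Vert\delta p_s\Vert)$ without isolating this step (and in fact omits the $\Vert\delta\theta_s\Vert$ term in the co-state estimate, which you include; it contributes another $O(T)$ term and does not affect the conclusion).
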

\begin{proof}
    Denote $\bfdelta\bftheta\coloneqq\bftheta^1 - \bftheta^2$, $\bfdelta\bfx \coloneqq \bfx^{\bftheta^1} - \bfx^{\bftheta^2}$ and $\bfdelta\bfp \coloneqq \bfp^{\bftheta^1} - \bfp^{\bftheta^2}$.  
    Since $x^{\bftheta^1}_0=x^{\bftheta^2}_0=x_0$, integrating the respective ODEs and using (A1$''$) we have
    \begin{align*}
        \Vert \delta x_t \Vert 
        \leq \int_{0}^{t} \Vert f(x^{\bftheta^1}_s, \theta^1_s) - f(x^{\bftheta^1}_t, \theta^2_s) \Vert ds 
        \leq K_L \int_{0}^{T} \Vert \delta x_s \Vert ds 
        + K_L \int_{0}^{T} \Vert \delta p_s \Vert ds, \\       
    \end{align*}
    and so 
    \begin{align*}
        \Vert \bfdelta\bfx \Vert_{L^\infty}
        \leq K_L T \Vert \bfdelta\bfx \Vert_\infty + K_L T \Vert \bfdelta \theta\Vert_\infty.
    \end{align*}
    Now, if $T<T_0\coloneqq1/K_L$, we then have
    \begin{align}
        \Vert \bfdelta \bfx \Vert_{L^\infty}
        \leq \frac{K_L T}{1 - K_L T}
        \Vert \bfdelta \bftheta \Vert_{L^\infty}.
        \label{eq:delta_x_est}
    \end{align}
    Similarly, 
    \begin{align*}
        &\Vert \delta p_t \Vert \leq K_L \Vert \delta x_T \Vert
        + K_L \int_{t}^{T} \Vert \delta x_s \Vert ds
        + K_L \int_{t}^{T} \Vert \delta p_s \Vert ds, \\ 
        &\Vert \bfdelta \bfp \Vert_{L^\infty}
        \leq (K_L + K_L T) \Vert \bfdelta \bfx \Vert_{L^\infty}
        + K_L T \Vert \bfdelta \bfp \Vert_{L^\infty},
    \end{align*}
    and hence
    \begin{align}
        \Vert \bfdelta \bfp \Vert_{L^\infty}
        \leq \frac{K_L(1+T)}{1-K_L T} \Vert \bfdelta \bfx \Vert_{L^\infty}.
        \label{eq:delta_p_est}
    \end{align}
    Combining~\eqref{eq:delta_x_est} and~\eqref{eq:delta_p_est} proves the claim. 
\end{proof}

With the above estimate, we can now prove Thm.~\ref{thm:small_T_uniqueness}. 
\begin{proof}[Proof of Thm.~\ref{thm:small_T_uniqueness}]
    By uniform strong concavity, the function $\theta \mapsto \E_{\mu_0} H(x^{\bftheta^1}_t,x^{\bftheta^1}_t,\theta)$ is strongly concave. Thus, we have a $\lambda_0>0$ such that
    \begin{align*}
        \frac{\lambda_0}{2} \Vert \theta^1_t - \theta^2_t \Vert^2
        \leq 
            {\left[
                \E_{\mu_0} \nabla H(x^{\bftheta^1}_t, p^{\bftheta^1}_t, \theta^2_t) 
                - \E_{\mu_0} \nabla H(x^{\bftheta^1}_t, p^{\bftheta^1}_t, \theta^1_t) 
            \right]} \cdot (\theta^1_t - \theta^2_t).
    \end{align*}
    A similar expression holds for $\theta \mapsto \E_{\mu_0} H(x^{\bftheta^2}_t,x^{\bftheta^2}_t,\theta)$ and so combining them and using assumptions (A1$''$) we have
    \begin{align*}
        \lambda_0 \Vert \theta^1_t - \theta^2_t \Vert^2
        \leq& 
            {\left[
                \E_{\mu_0} \nabla H(x^{\bftheta^1}_t, p^{\bftheta^1}_t, \theta^2_t) 
                - \E_{\mu_0} \nabla H(x^{\bftheta^1}_t, p^{\bftheta^1}_t, \theta^1_t) 
            \right]} \cdot (\theta^1_t - \theta^2_t) \\ 
        &+
        {\left[
            \E_{\mu_0} \nabla H(x^{\bftheta^2}_t, p^{\bftheta^2}_t, \theta^1_t) 
            - \E_{\mu_0} \nabla H(x^{\bftheta^2}_t, p^{\bftheta^2}_t, \theta^2_t) 
        \right]} \cdot (\theta^1_t - \theta^2_t) \\
        \leq&
        \E_{\mu_0} 
        \Vert 
            \nabla H(x^{\bftheta^1}_t, p^{\bftheta^1}_t, \theta^1_t) 
            - \nabla H(x^{\bftheta^2}_t, p^{\bftheta^2}_t, \theta^1_t) 
        \Vert
        \Vert 
            \theta^1_t - \theta^2_t
        \Vert \\ 
        &+\E_{\mu_0} 
        \Vert 
            \nabla H(x^{\bftheta^1}_t, p^{\bftheta^1}_t, \theta^2_t) 
            - \nabla H(x^{\bftheta^2}_t, p^{\bftheta^2}_t, \theta^2_t) 
        \Vert
        \Vert 
            \theta^1_t - \theta^2_t
        \Vert \\
        \leq&
        K_L \Vert \bfdelta\bftheta \Vert_{L^\infty} (\Vert \bfdelta\bfx \Vert_{L^\infty} + \Vert \bfdelta\bfp \Vert_{L^\infty}).
    \end{align*}
    Combining the above and Lemma~\ref{lem:small_T_xp_lipschitz}, we have
    \begin{align*}
        \Vert \bfdelta\bftheta \Vert_{L^\infty}^2
        \leq \frac{K_L}{\lambda_0} C(T)
        \Vert \bfdelta\bftheta \Vert_{L^\infty}^2.
    \end{align*}
    But $C(T)=o(1)$ and so we may take $T$ sufficiently small so that $K_L C(T) < \lambda_0$ to conclude that $\Vert \bfdelta\bftheta \Vert_{L^\infty} = 0$. 
\end{proof}

In the context of machine learning, since $f$ is bounded, small $T$ roughly corresponds to the regime where the reachable set of the forward dynamics is small. This can be loosely interpreted as the case where the model has low capacity or expressive power. We note that the number of parameters is still infinite, since we only require $\bftheta$ to be essentially bounded and measurable in time. Hence, Thm.~\ref{thm:small_T_uniqueness} can be interpreted as the statement that when the model capacity is low, the optimal solution is unique, albeit with possibly high loss function values. Note that the strong concavity of the Hamiltonian does not imply that the loss function $J$ is strongly convex, or even convex, which is often an unrealistic assumption in deep learning. In fact, in the case considered in Example \ref{ex:linear_with_theta}, we observe that $H$ is strongly concave but the loss function $J$ can be highly non-convex due to the non-linear transformation $\sigma$. Compared with the characterization using HJB (Sec.~\ref{sec:HJB_vicosity}), we observe that the uniqueness of the solutions of the PMP requires the small $T$ condition. 


\section{From mean-field PMP to sampled PMP}
\label{sec:error_analysis}

So far, we have focused our discussion on the mean-field control problem~\eqref{eq:mean_field_ctrl_prob} and mean-field PMP~\eqref{eq:compact_PMP_max}. However, the solution of the mean-field PMP requires maximizing an expectation. Hence, in practice we must resort to solving a sampled version~\eqref{eq:compact_PMP_max_sampled}, which constitutes necessary conditions for the sampled optimal control problem~\eqref{eq:sampled_ctrl_prob}.

The goal of this section is to draw some precise connections between the solutions of the mean-field PMP~\eqref{eq:compact_PMP_max} and the sampled PMP~\eqref{eq:compact_PMP_max_sampled}. In particular, we show that under appropriate conditions, near any stable (to be precisely defined later) solution of the mean-field PMP~\eqref{eq:compact_PMP_max} we can find with high probability a solution of the sampled PMP~\eqref{eq:compact_PMP_max_sampled}. This allows us to establish a concrete link, via the maximum principle, between solutions of the population risk minimization problem~\eqref{eq:mean_field_ctrl_prob} and the empirical risk minimization problem~\eqref{eq:sampled_ctrl_prob}. 
To proceed, the key observation is that the interior solutions to both the mean-field and sampled PMPs can be written as the solutions to algebraic equations on Banach spaces. Indeed, in view of the compact notation~\eqref{eq:compact_PMP_max}, let us suppose that $\bftheta^*$ is a solution of the PMP such that the maximization step attains a maximum in the interior of $\Theta$ for a.e.\,$t\in[0,T]$. Note that if $\Theta$ is sufficiently large, e.g., $\Theta=\R^m$, then this must be the case. We shall hereafter assume this holds. Consequently, the PMP solution satisfies (by dominated convergence theorem)
\begin{align}
  {\bfF(\bftheta^*)}_t \coloneqq \E_{\mu_0} \nabla_\theta H(x^{\bftheta^*}_t, p^{\bftheta^*}_t , \theta^*_t) = 0,
  \label{eq:F_defn}
\end{align}
for a.e.\,$t$, where $\bfF: L^\infty([0,T],\Theta) \rightarrow L^\infty([0,T],\R^m)$ is a Banach space mapping. 
Similarly, from~\eqref{eq:compact_PMP_max_sampled} we know that an interior solution $\bftheta^N$ of the finite-sample PMP is a random variable which satisfies
\begin{align}
    {\bfF_N(\bftheta^N)}_t \coloneqq \frac{1}{N} \sum_{i=1}^{N}
    \nabla_\theta H(x^{\bftheta^N,i}_t, p^{\bftheta^N,i}_t , \theta^N_t) = 0,
\end{align}
for a.e.\,$t$. Now, $\bfF_N$ is a random approximation of $\bfF$ and $\E \bfF_N(\bftheta) = \bfF(\bftheta)$ for all $\bftheta$. In fact, $\bfF_N \rightarrow \bfF$ almost surely by law of large numbers. Hence, the analysis of the approximation properties of the mean-field PMP by its sampled counterpart amounts to the study of the approximation of zeros of $\bfF$ by those of $\bfF_N$. 

In view of this, we shall take a brief excursion to develop some theory on random approximations of zeros of Banach space mappings at an abstract level, and then use these results to deduce properties of the PMP approximations. The techniques employed in the next section are reminiscent of classical numerical analysis results on finite difference approximation schemes~\cite{keller1975approximation}, except that we work with random approximations. 

\subsection{Excursion: random approximations of zeros of Banach space mappings}
\label{sec:banach_space_excusion}

Let $(U,\Vert\cdot\Vert_U),(V,\Vert\cdot\Vert_V)$ be Banach spaces and $F: U \rightarrow V$ be a mapping. We first define a notion of stability, which shall be a primary condition that ensures existence of close-by zeros of approximations.  
\begin{definition}
\label{def:stability}
	For $\rho>0$ and $x\in U$, define $S_\rho(x)\coloneqq\{y\in U: \Vert x-y \Vert_U \leq \rho \}$. We say that the mapping $F$ is stable on $S_\rho(x)$ if there exists a constant $K_\rho>0$ such that for all $y,z \in S_\rho(x)$,
	\begin{align*}
		\Vert y - z \Vert_U \leq K_\rho \Vert F(y) - F(z) \Vert_V .
	\end{align*}
\end{definition}
Note that if $F$ is stable on $S_\rho(x)$, then it is trivially true that it has at most one solution to $F=0$ on $S_\rho(x)$. If it does have a solution, say at $x^*$, then it is necessarily isolated, i.e., if $DF(x^*)$ exists, then it is non-singular. 
The following proposition establishes a stronger version of this: if $DF(x)$ exists for any $x\in S_{\rho}(x^*)$, then it is necessarily non-singular.

\begin{proposition}
\label{prop:stability_implies_isolation}
  Let $F$ on $S_\rho(x^*)$ be stable. Then, for any $x\in S_\rho(x^*)$, if $DF(x)$ exists, then it is non-singular, i.e. $DF(x)y = 0$ implies $y=0$.
\end{proposition}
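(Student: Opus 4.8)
The plan is to argue at the infinitesimal level, playing the stability estimate (a \emph{lower} bound on the increment of $F$) against the defining property of the Fréchet derivative (which makes the remainder \emph{sub}-linear in the increment). Fix $x\in S_\rho(x^*)$ at which $DF(x)$ exists, and suppose toward a contradiction that $DF(x)y=0$ for some $y\ne 0$. The goal is to show this is impossible, which forces $y=0$ and hence the injectivity (non-singularity) of $DF(x)$.

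First I would probe $F$ along the ray $t\mapsto x+ty$. Taking $x$ in the interior of $S_\rho(x^*)$ for the moment, there is $t_0>0$ with $x+ty\in S_\rho(x^*)$ for all $|t|<t_0$, so the stability property of Definition~\ref{def:stability} applies to the pair $x+ty,\,x$ and yields
\begin{equation*}
  |t|\,\|y\|_U=\|(x+ty)-x\|_U\le K_\rho\,\|F(x+ty)-F(x)\|_V .
\end{equation*}
On the other hand, the definition of the Fréchet derivative~\eqref{eq:frechet_deriv_defn} applied to the increment $ty$ gives $\|F(x+ty)-F(x)-DF(x)(ty)\|_V=r(x,ty)\,\|ty\|_U$ with $r(x,ty)\to 0$ as $t\to 0$; since $DF(x)(ty)=t\,DF(x)y=0$ by assumption, this collapses to $\|F(x+ty)-F(x)\|_V=r(x,ty)\,|t|\,\|y\|_U$. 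Substituting into the stability inequality and dividing through by $|t|\,\|y\|_U>0$ leaves $1\le K_\rho\,r(x,ty)$, and letting $t\to 0$ sends the right-hand side to $0$ --- the sought contradiction. Thus no nonzero kernel direction can exist.

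The one genuinely delicate point, and the step I expect to require the most care, is the boundary case $\|x-x^*\|_U=\rho$, where the probe point $x+ty$ may leave $S_\rho(x^*)$ for every small $t$ of a given sign, so that the stability inequality cannot be invoked. Since $DF(x)(-y)=0$ as well, one can run the identical argument along whichever of the two rays $x\pm ty$ stays in $S_\rho(x^*)$ along a sequence $t_n\downarrow 0$; the only residual obstruction is the degenerate tangency situation in which both rays exit the ball. This is harmless for the use made of the proposition, since stability is invoked around isolated interior zeros of $F$ (cf.\ the discussion preceding the statement), so it suffices --- and is cleanest --- to state and apply the result for $x$ in the interior of $S_\rho(x^*)$. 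Everything else is the routine combination of the two displayed estimates above.
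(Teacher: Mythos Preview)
Your proof is correct and essentially identical to the paper's: both suppose a nonzero kernel vector $y$, probe along the ray $x+\alpha y$ (your $t$), combine the stability lower bound with the sub-linear Fr\'echet remainder (using $DF(x)y=0$) to obtain $1\le K_\rho\, r(x,\alpha y)$, and send $\alpha\to 0$ for a contradiction. The paper simply asserts that $\alpha$ can be taken small enough that $x+\alpha y\in S_\rho(x^*)$ without addressing the boundary case you flag, so your extra care there goes beyond what the paper does.
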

\begin{proof}
  Suppose for the sake of contradiction that $DF(x)y = 0$ and $\Vert y\Vert_U \neq 0$. Define $z(\alpha) \coloneqq x + \alpha y$ with $\alpha$ sufficiently small so that $z(\alpha)\in S_\rho(x^*)$. Then, 
  \begin{align*}
    \alpha \Vert y \Vert_U = & \Vert x - z(\alpha) \Vert_U              \\
    \leq                   & K_\rho \Vert F(x) - F(z(\alpha)) \Vert_V \\ 
    \leq                   & K_\rho ( \alpha \Vert DF(x)y \Vert_V     
    + \Vert F(x+\alpha y) - F(x) - DF(x) \alpha y \Vert_V ).
  \end{align*}
  But $DF(x)y=0$, and so $\alpha \Vert y \Vert_U \leq K_\rho r(x,\alpha y) \alpha \Vert y \Vert_U$, By definition of the Fr\'{e}chet derivative~\eqref{eq:frechet_deriv_defn}, $r(x,\alpha y)\rightarrow 0$ as $\alpha\rightarrow 0$. Thus if $\alpha$ is sufficiently small so that $K_\rho r(x,\alpha y)<1$, then $\Vert y \Vert_U=0$ and hence we arrive at a contradiction. 
\end{proof}


As the previous proposition suggests, a converse statement that establishes stability will require $DF(x)$ to be non-singular on some neighborhood of $x^*$. One in fact requires more, i.e. that $DF$ needs to be Lipschitz. Note that for a linear operator $A:U\rightarrow V$, we also use $\Vert A \Vert_V$ to denote the usual induced norm, $\Vert A \Vert_V = \sup_{\Vert y\Vert_U \leq 1} \Vert Ay \Vert_V$.

\begin{proposition}
\label{prop:isolation_plus_lip_implies_stability}
    Suppose $DF(x^*)$ is non-singular, $DF(x)$ exists and $\Vert DF(x) - DF(y) \Vert_V \leq K_L \Vert x - y \Vert_U$ for all $x,y\in S_{\rho}(x^*)$. Then, $F$ is stable on $S_{\rho_0}(x^*)$ for any $0<\rho_0\leq \min(\rho, \tfrac{1}{2}(K_L \Vert {DF(x^*)}^{-1} \Vert_U)^{-1})$ with stability constant
    \begin{align*}
        K_{\rho_0} = 2\Vert {DF(x^*)}^{-1} \Vert_U.
    \end{align*}
\end{proposition}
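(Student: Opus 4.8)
The plan is to quantify stability directly from a first-order Taylor expansion of $F$ with integral remainder, using the Lipschitz continuity of $DF$ to control the deviation of $DF$ from the reference operator $A \coloneqq DF(x^*)$ over the ball $S_{\rho_0}(x^*)$. The whole argument rests on the mean-value representation for Fr\'echet-differentiable maps: since $S_{\rho_0}(x^*)$ is convex and $DF$ is (Lipschitz, hence) continuous on it, for any $y,z\in S_{\rho_0}(x^*)$ the segment $w_t \coloneqq z + t(y-z)$, $t\in[0,1]$, stays in $S_{\rho_0}(x^*)\subseteq S_\rho(x^*)$, and we may write
\[
F(y)-F(z) = \int_0^1 DF(w_t)(y-z)\,dt.
\]

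First I would add and subtract $A$ inside the integral to isolate the leading linear term, then apply the bounded inverse $A^{-1}$ and rearrange:
\[
y-z = A^{-1}\big(F(y)-F(z)\big) - A^{-1}\int_0^1 \big[DF(w_t)-A\big](y-z)\,dt.
\]
Taking the $U$-norm and using submultiplicativity of the operator norm together with the Lipschitz bound $\Vert DF(w_t)-A\Vert_V = \Vert DF(w_t)-DF(x^*)\Vert_V \leq K_L\Vert w_t - x^*\Vert_U \leq K_L\rho_0$ (valid because $w_t\in S_{\rho_0}(x^*)$) yields
\[
\Vert y-z\Vert_U \leq \Vert A^{-1}\Vert_U\,\Vert F(y)-F(z)\Vert_V + K_L\rho_0\,\Vert A^{-1}\Vert_U\,\Vert y-z\Vert_U.
\]

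The final step is to absorb the last term into the left-hand side: the hypothesis $\rho_0 \leq \tfrac12(K_L\Vert A^{-1}\Vert_U)^{-1}$ is exactly what makes $K_L\rho_0\Vert A^{-1}\Vert_U \leq \tfrac12$, so that $\tfrac12\Vert y-z\Vert_U \leq \Vert A^{-1}\Vert_U\,\Vert F(y)-F(z)\Vert_V$, which is the claimed stability estimate with $K_{\rho_0} = 2\Vert A^{-1}\Vert_U = 2\Vert {DF(x^*)}^{-1}\Vert_U$. Note the constant $\tfrac12$ in the definition of $\rho_0$ is tailored precisely to close this estimate.

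I expect the only genuinely delicate point to be the justification of the integral mean-value representation in the Banach-space setting, i.e.\ that $t\mapsto DF(w_t)(y-z)$ is (Bochner-)integrable and that the integral indeed reconstructs $F(y)-F(z)$; this follows from continuity of $DF$ along the compact segment and the scalar fundamental theorem of calculus applied to $t\mapsto \ell(F(w_t))$ for each $\ell$ in the dual of $V$, but it is worth stating with care. Everything else reduces to a routine chain of triangle inequalities. I would also remark that, although ``non-singular'' in Prop.~\ref{prop:stability_implies_isolation} only meant injectivity, the appearance of $\Vert {DF(x^*)}^{-1}\Vert_U$ here presumes $DF(x^*)$ is \emph{boundedly} invertible, which is the standing interpretation that guarantees $\Vert A^{-1}\Vert_U \coloneqq \sup_{\Vert v\Vert_V\le 1}\Vert A^{-1}v\Vert_U$ is finite.
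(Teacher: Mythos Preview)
Your proof is correct and follows essentially the same route as the paper: both use the integral mean-value representation $F(y)-F(z)=\int_0^1 DF(w_t)(y-z)\,dt$ along the segment in $S_{\rho_0}(x^*)$ and the Lipschitz bound $\Vert DF(w_t)-DF(x^*)\Vert_V\le K_L\rho_0$. The only cosmetic difference is in the final step: the paper packages the integral as an operator $R(y,z)=\int_0^1 DF(w_t)\,dt$, invokes the Banach lemma to invert it with $\Vert R(y,z)^{-1}\Vert_U\le 2\Vert DF(x^*)^{-1}\Vert_U$, and reads off stability from $(y-z)=R(y,z)^{-1}(F(y)-F(z))$, whereas you apply $A^{-1}$ directly and absorb the perturbation term; these are equivalent manipulations yielding the same constant.
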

\begin{proof}
  Let $\rho_0\leq\rho$ and take $x,y \in S_{\rho_0}(x^*)$. Using the mean value theorem, we can write $F(x)-F(y) = R(x,y)(x-y)$ where
  \begin{align*}
      R(x,y) \coloneqq \int_{0}^{1} DF(s x + (1-s) y) ds.
  \end{align*}
  But, using the Lipschitz condition we have
  \begin{align*}
      \Vert R(x,y) - DF(x^*) \Vert_V
      \leq&
      \int_{0}^{1} \Vert DF(s x + (1-s) y) - DF(s x^* + (1-s) x^*) \Vert_V ds \\ 
      \leq& K_L
      \int_{0}^{1} \Vert s (x-x^*) + (1-s) (y-x^*) \Vert_U ds \\ 
      \leq& \rho_0 K_L.
  \end{align*}
  We take $\rho_0$ sufficiently small so that $\rho_0 K_L \leq \tfrac{1}{2} \Vert {DF(x^*)}^{-1} \Vert_U^{-1}$. Then, by the Banach lemma, $R(x,y)$ is non-singular and $\Vert {R(x,y)}^{-1} \Vert_U \leq 2 \Vert {DF(x^*)}^{-1}\Vert_U$. The result follows since $(x-y) = {R(x,y)}^{-1} (F(x) - F(y))$. 
\end{proof}

Now, let us now introduce a family of \emph{random} mappings $F_N$ that approximate $F$. Let $(\Omega, \mathcal{F}, \mathbb{P})$ be a probability space and $\{ F_N(\omega): N \geq 1, \omega\in\Omega \}$ be a family of mappings from $U$ to $V$ such that $\omega \mapsto F_N(\omega)(x)$ is $\mathcal{F}$-measurable for each $x$ (we equip the Banach spaces $U,V$ with the Borel $\sigma$-algebra). We make the following assumptions which will allow us to relate the random solutions of $F_N=0$ with those of $F=0$ in Thm.~\ref{thm:banach_finite_sample}.
\begin{itemize}
  \item[(B1)] (Stability) There exists $x^*\in U$ such that $F(x^*)=0$ and $F$ is stable on $S_\rho(x^*)$ for some $\rho>0$. 
	\item[(B2)] (Uniform convergence in probability) For all $N\geq 1$, $DF(x)$ and $DF_N(x)$ exists for all $x\in S_\rho(x^*)$, $\PP$-a.s.\,and
    \begin{align*}
    	  & \PP 
    	\left[
    	\Vert F(x) - F_N(x)\Vert_V \geq s
    	\right]
    	\leq r_1(N,s), \\ 
    	  & \PP 
    	\left[
    	\Vert DF(x) - DF_N(x)\Vert_V \geq s
    	\right]
    	\leq r_2(N,s),
    \end{align*}
	   for some real-valued functions $r_1,r_2$ such that $r_1(N,s),r_2(N,s)\rightarrow 0$ as $N\rightarrow\infty$.
	\item[(B3)] (Uniformly Lipschitz derivative) There exists $K_L>0$ such that for all $x,y \in S_\rho(x^*)$, 
    \begin{align*}
    	\Vert DF_N(x) - DF_N(y) \Vert_V \leq K_L \Vert x - y \Vert_U, 
    	\qquad \PP\text{-a.s.}                                   
    \end{align*}
\end{itemize}

\begin{theorem}
\label{thm:banach_finite_sample}
	Let (B1)-(B3) hold. Then, there exist positive constants $s_0, \rho_1, C$ with $\rho_1<\rho$ and $U$-valued random variables $x_N\in S_{\rho_1}(x^*)$ satisfying
	\begin{align*}
		  & \PP [\Vert x_N - x^* \Vert_U \geq C s ] \leq r_1(N, s) + r_2(N, s), \qquad s \in (0,s_0],\\ 
		  & \PP [F_N(x_N) \neq 0] \leq r_1(N, s_0) + r_2(N, s_0).
    \end{align*}
    In particular, $x_N \rightarrow x^*$ and $F_N(x_N) \rightarrow 0$ in probability. 
\end{theorem}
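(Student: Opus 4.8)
The plan is to run a quantitative inverse function theorem with a random perturbation. The structure mirrors the two deterministic propositions but adds the crucial new ingredient of \emph{existence}: I will use stability of $F$ at $x^*$ to get invertibility of $DF(x^*)$, transfer this to $DF_N(x^*)$ with high probability via the Banach lemma, and then manufacture a nearby zero of $F_N$ through a Newton-type contraction whose displacement from $x^*$ is controlled by $\Vert F_N(x^*)\Vert_V = \Vert F_N(x^*)-F(x^*)\Vert_V$.

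First I would extract the quantitative consequence of stability. Differentiating the defining inequality of Definition~\ref{def:stability} along rays emanating from $x^*$ shows $DF(x^*)$ is bounded below by $K_\rho^{-1}$; together with Prop~\ref{prop:stability_implies_isolation} (non-singularity) this gives $\Vert DF(x^*)^{-1}\Vert_U \leq K_\rho$. Next fix the deterministic threshold $\delta_0 := (2K_\rho)^{-1}$. On the event $A_N := \{\Vert DF_N(x^*)-DF(x^*)\Vert_V \leq \delta_0\}$, the Banach lemma yields that $DF_N(x^*)$ is invertible with $\Vert DF_N(x^*)^{-1}\Vert_U \leq 2K_\rho$, and by (B2) we have $\PP[A_N^c] \leq r_2(N,\delta_0)$.

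The core step is the existence of a nearby zero on $A_N$. Set $\rho_1 := \min(\rho,(4K_LK_\rho)^{-1})$ and define the Newton map $T_N(x) := x - DF_N(x^*)^{-1}F_N(x)$, whose fixed points are exactly the zeros of $F_N$. Using (B3) and the mean value theorem exactly as in the proof of Prop~\ref{prop:isolation_plus_lip_implies_stability}, on $A_N$ the estimate $\Vert T_N(x)-T_N(y)\Vert_U \leq 2K_\rho K_L\rho_1\Vert x-y\Vert_U \leq \tfrac12\Vert x-y\Vert_U$ holds for $x,y\in S_{\rho_1}(x^*)$, so $T_N$ is a $\tfrac12$-contraction there, and $\Vert T_N(x^*)-x^*\Vert_U \leq 2K_\rho\Vert F_N(x^*)\Vert_V$. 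Hence on the good event $G_N := A_N \cap \{\Vert F_N(x^*)\Vert_V \leq \epsilon_0\}$ with $\epsilon_0 := \rho_1/(4K_\rho)$, one checks that $T_N$ maps $S_{\rho_1}(x^*)$ into itself, so the contraction mapping theorem provides a unique fixed point $x_N\in S_{\rho_1}(x^*)$ with $F_N(x_N)=0$ and $\Vert x_N-x^*\Vert_U \leq 4K_\rho\Vert F_N(x^*)\Vert_V$; set $C := 4K_\rho$. Off $G_N$ I define $x_N := x^*$, which still lies in $S_{\rho_1}(x^*)$; measurability of $x_N$ follows since on $G_N$ it is the limit of the measurable Newton iterates started at $x^*$.

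Finally I would assemble the two bounds with $s_0 := \min(\delta_0,\epsilon_0)$. Since $F_N(x_N)=0$ on $G_N$, the event $\{F_N(x_N)\neq 0\}$ is contained in $G_N^c$, and using $\delta_0,\epsilon_0 \geq s_0$, monotonicity of the genuine tail probabilities, and (B2), we get $\PP[F_N(x_N)\neq 0] \leq r_2(N,\delta_0)+r_1(N,\epsilon_0) \leq r_1(N,s_0)+r_2(N,s_0)$. For the distance bound, on $G_N$ we have $\Vert x_N-x^*\Vert_U \leq C\Vert F_N(x^*)-F(x^*)\Vert_V$ while off $G_N$ we have $x_N=x^*$, so for $s\in(0,s_0]$ the event $\{\Vert x_N-x^*\Vert_U \geq Cs\}$ is contained in $\{\Vert F_N(x^*)-F(x^*)\Vert_V \geq s\}$, whence $\PP[\Vert x_N-x^*\Vert_U \geq Cs] \leq r_1(N,s) \leq r_1(N,s)+r_2(N,s)$; convergence in probability is then immediate as $N\to\infty$. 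The hard part will be the existence step, since unlike the stability propositions, which only certify uniqueness, here a zero of the perturbed map must be produced; the delicate point is to calibrate $\rho_1$ and $\epsilon_0$ so that the Newton iterate provably stays inside $S_{\rho_1}(x^*)$ while keeping the displacement linearly controlled by $\Vert F_N(x^*)\Vert_V$, and to do so with all random thresholds arranged to collapse cleanly onto the single $s_0$.
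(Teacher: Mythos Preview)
Your proposal is correct and follows essentially the same approach as the paper: both transfer invertibility of $DF(x^*)$ to $DF_N(x^*)$ via the Banach lemma on a high-probability event, then run the Newton-type contraction $x\mapsto x-DF_N(x^*)^{-1}F_N(x)$ on a small ball, defining $x_N:=x^*$ on the bad event. The only cosmetic differences are that the paper packages the invertibility step into a separate lemma (Lemma~\ref{lem:non_singular_DF_N}) and bounds $\Vert x_N-x^*\Vert_U$ via the induced stability of $F_N$ rather than the direct fixed-point displacement estimate, and it works with an $s$-indexed family of events $A_N(s)$ and then argues the resulting $x_{N,s}$ is independent of $s$; your use of a single fixed good event $G_N$ and the standard contraction bound $\Vert x_N-x^*\Vert_U\le (1-\tfrac12)^{-1}\Vert T_N(x^*)-x^*\Vert_U$ is slightly cleaner and in fact yields the marginally sharper bound $\PP[\Vert x_N-x^*\Vert_U\ge Cs]\le r_1(N,s)$.
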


To establish Thm.~\ref{thm:banach_finite_sample}, we first prove that for large $N$, with high probability $DF_N(x^*)$ is non-singular and $\Vert DF_N(x^*)^{-1} \Vert_U$ is uniformly bounded. 

\begin{lemma}
\label{lem:non_singular_DF_N}
  Let (B1)-(B3) hold. Then, there exists a constant $s_0>0$ such that for each $s\in (0,s_0]$ and $N\geq 1$, there exists a measurable $A_N(s)\subset \Omega$ such that $\PP[A_N(s)] \geq 1 - r_1(N,s) - r_2(N,s)$ and for each $\omega\in A_N(s)$,
  \begin{align*}
      \Vert F(x^*) - F_N(\omega)(x^*) \Vert_V < s.
  \end{align*}
  Moreover, $DF_N(\omega)(x^*)$ is non-singular with
\begin{align*}
      \Vert {DF_N(\omega)(x^*)}^{-1} \Vert_U \leq 2 \Vert {DF(x^*)}^{-1}\Vert_U. 
  \end{align*}
  In particular, $DF_N(\omega)$ is stable on $S_{\rho_0}(x^*)$ with $\rho_0\leq \min(\rho, \tfrac{1}{4}{(K_L \Vert {DF(x^*)}^{-1} \Vert_U)}^{-1})$ and stability constant $K_{\rho_0} = 4\Vert {DF(x^*)}^{-1}\Vert_U$.
\end{lemma}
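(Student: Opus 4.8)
The plan is to carve out a high-probability event on which the random operator $DF_N(\omega)(x^*)$ is a small deterministic perturbation of $DF(x^*)$, and then to transfer non-singularity and stability through two nested perturbation arguments. Before starting I would record that, by (B1) and Prop.~\ref{prop:stability_implies_isolation}, $DF(x^*)$ is non-singular, so the quantity $\Vert DF(x^*)^{-1}\Vert_U$ appearing in the statement is well-defined; I then set $s_0\coloneqq \tfrac12 \Vert DF(x^*)^{-1}\Vert_U^{-1}$.

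For the probabilistic part, for each $s\in(0,s_0]$ and $N\geq1$ I would define the good event
\begin{equation*}
A_N(s)\coloneqq \big\{\omega:\Vert F(x^*)-F_N(\omega)(x^*)\Vert_V < s\big\}\cap\big\{\omega:\Vert DF(x^*)-DF_N(\omega)(x^*)\Vert_V < s\big\},
\end{equation*}
which is measurable because $\omega\mapsto F_N(\omega)(x^*)$ and $\omega\mapsto DF_N(\omega)(x^*)$ are $\mathcal{F}$-measurable. Applying the two concentration bounds of (B2) at the single point $x^*$ and taking a union bound over the two complementary events immediately gives $\PP[A_N(s)]\geq 1-r_1(N,s)-r_2(N,s)$, and the first asserted inequality $\Vert F(x^*)-F_N(\omega)(x^*)\Vert_V<s$ holds on $A_N(s)$ by construction.

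The core step is the operator perturbation. On $A_N(s)$ with $s\leq s_0$ one has $\Vert DF(x^*)^{-1}\Vert_U\,\Vert DF(x^*)-DF_N(\omega)(x^*)\Vert_V < \Vert DF(x^*)^{-1}\Vert_U\, s_0 = \tfrac12 < 1$, so the Banach lemma yields that $DF_N(\omega)(x^*)$ is non-singular with
\begin{equation*}
\Vert DF_N(\omega)(x^*)^{-1}\Vert_U \leq \frac{\Vert DF(x^*)^{-1}\Vert_U}{1-\Vert DF(x^*)^{-1}\Vert_U\,\Vert DF(x^*)-DF_N(\omega)(x^*)\Vert_V} \leq 2\Vert DF(x^*)^{-1}\Vert_U,
\end{equation*}
which is the second assertion. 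Finally, on $A_N(s)$ the map $F_N(\omega)$ has a non-singular derivative at $x^*$ and, by (B3), a $K_L$-Lipschitz derivative on $S_\rho(x^*)$, so Prop.~\ref{prop:isolation_plus_lip_implies_stability} applies to $F_N(\omega)$. Inserting the bound $\Vert DF_N(\omega)(x^*)^{-1}\Vert_U\leq 2\Vert DF(x^*)^{-1}\Vert_U$ into that proposition's admissible radius $\tfrac12(K_L\Vert DF_N(\omega)(x^*)^{-1}\Vert_U)^{-1}$ and stability constant $2\Vert DF_N(\omega)(x^*)^{-1}\Vert_U$ shows that $F_N(\omega)$ is stable on $S_{\rho_0}(x^*)$ for $\rho_0\leq\min(\rho,\tfrac14(K_L\Vert DF(x^*)^{-1}\Vert_U)^{-1})$ with constant $K_{\rho_0}=4\Vert DF(x^*)^{-1}\Vert_U$, using that stability with a given constant implies stability with any larger one.

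The argument is essentially routine, so I do not expect a serious obstacle; the only genuine care is the constant bookkeeping across the two nested perturbations, where the factor-of-two Banach estimate for $\Vert DF_N(x^*)^{-1}\Vert_U$ is fed into Prop.~\ref{prop:isolation_plus_lip_implies_stability} and thereby produces the factor of four in $K_{\rho_0}$ and the $\tfrac14$ in the radius $\rho_0$. One subtlety worth flagging is the precise reading of ``non-singular'': since (B1) delivers only a lower bound on $DF(x^*)$, one may interpret $\Vert DF(x^*)^{-1}\Vert_U$ as the reciprocal of that lower-bound constant, in which case all the estimates above carry through verbatim with injectivity preserved in place of full invertibility.
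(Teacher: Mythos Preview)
Your proposal is correct and follows essentially the same route as the paper: define the good event $A_N(s)$ as the intersection of the two concentration events from (B2), set $s_0=\tfrac12\Vert DF(x^*)^{-1}\Vert_U^{-1}$, apply the Banach lemma to get the factor-of-two bound on $\Vert DF_N(\omega)(x^*)^{-1}\Vert_U$, and then feed that into Prop.~\ref{prop:isolation_plus_lip_implies_stability} with (B3) to obtain the stated stability radius and constant. Your preliminary observation that (B1) together with Prop.~\ref{prop:stability_implies_isolation} ensures $DF(x^*)$ is non-singular is a useful clarification the paper leaves implicit.
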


\begin{proof}
  For $s>0$ set
  \begin{align*}
      A_N(s) \coloneqq 
      \{
          &\omega\in\Omega:
          \Vert F(x^*) - F_N(\omega)(x^*) \Vert_V) < s \\
          &\text{and }
          \Vert DF(x^*) - DF_N(\omega)(x^*) \Vert_V < s            
      \}.
  \end{align*}
  Observe that $A_N(s)$ is measurable as $DF_N(\omega)(x^*)$ is measurable and assumption (B2) implies $\PP[A_N(s)]\geq 1 - r_1(N,s) - r_2(N,s)$. Now, take $s$ sufficiently small so that $s \leq s_0 = \tfrac{1}{2} \Vert {DF(x^*)}^{-1}\Vert_U^{-1}$. Then, for each $\omega\in A_N(s)$, the Banach lemma implies $DF_N(\omega)(x^*)$ is non-singular and
\begin{align*}
  \Vert DF_N(\omega)(x^*)^{-1} \Vert_U                     
  \leq \frac{\Vert DF(x^*)^{-1} \Vert_U}{ 1 - \frac{1}{2}} 
  = 2 \Vert DF(x^*)^{-1} \Vert_U.
  \end{align*}
  Finally, we use Proposition~\ref{prop:isolation_plus_lip_implies_stability} to deduce stability of $F_N(\omega)$.
\end{proof}
Now we are ready to prove Thm.~\ref{thm:banach_finite_sample} by constructing a uniform contraction mapping whose fixed point is a solution of $F_N(x)=0$. 
\begin{proof}[Proof of Thm.~\ref{thm:banach_finite_sample}]
  Let $s_0$, $A_N(s)$ and $\rho_0$ be those defined in Lemma~\ref{lem:non_singular_DF_N}. 
  For each $\omega \in A_N(s)$ with $s\leq s_0$, define the mapping
  \begin{align*}
      G_N(\omega)(x) \coloneqq x - {DF_N(\omega)(x^*)}^{-1} F_N(\omega)(x). 
  \end{align*}
  We now show that this is in fact a uniform contraction on $S_{\rho_1}(x^*)$ for sufficiently small $\rho_1$. Let $x,y\in S_{\rho_1}(x^*)$. By the mean value theorem, we have
  \begin{align*}
      & G_N(\omega)(x) - G_N(\omega)(y) \\
      =\, & {DF_N(\omega)(x^*)}^{-1}
      [ DF_N(\omega)(x^*)(x-y) - (F_N(\omega)(x) - F_N(\omega)(y)) ] \\ 
      =\, & {DF_N(\omega)(x^*)}^{-1} [DF_N(\omega)(x^*) - R_N(\omega)(x,y)] (x - y),
  \end{align*}
  where $R_N(\omega)(x,y) = \int_{0}^{1} DF_N(\omega)( s x + (1-s) y ) ds$.
      Lipschitz condition (B3) implies
  \begin{align*}
      \Vert DF_N(\omega)(x^*) - R_N(\omega)(x,y) \Vert_V  \leq \rho_1 K_L
  \end{align*}
  and hence by Lemma~\ref{lem:non_singular_DF_N},
  \begin{align*}
      \Vert G_N(\omega)(x) - G_N(\omega)(y) \Vert_U \leq \alpha \Vert x - y \Vert_U, 
  \end{align*}
  where $\alpha = 2 K_L \rho_1 \Vert DF(x^*)^{-1} \Vert_U$. We now pick $\rho_1<\rho_0$ sufficiently small so that $\alpha < 1$. It remains to show that the mapping $G_N(\omega)$ maps $S_{\rho_1}(x^*)$ onto itself. Let $x \in S_{\rho_1}(x^*)$, then by noting that $F(x^*)=0$,
  \begin{align*}
      \Vert G_N(\omega)(x) - x^* \Vert_U 
          & \leq \Vert G_N(\omega)(x) - G_N(\omega)(x^*) \Vert_U
      + \Vert G_N(\omega)(x^*) - x^* \Vert_U \\ 
          & \leq \alpha \rho_1 + 2\Vert {DF(x^*)}^{-1} \Vert_U \Vert F_N(\omega)(x^*) - F(x^*) \Vert_V .
  \end{align*}
  Using Lemma~\ref{lem:non_singular_DF_N} again, we have 
  \begin{align*}
      \Vert G_N(\omega)(x) - x^* \Vert_U
          & \leq \alpha \rho_1 + 2 s \Vert {DF(x^*)}^{-1} \Vert_U.
  \end{align*}
  We now take $s_0>s$ small enough so that $2 s_0 \Vert {DF(x^*)}^{-1} \Vert_U < (1-\alpha)\rho_1$. Then, for all $N\geq 1$, $G_N(\omega)$ is a contraction, uniform in $N$, on $S_{\rho_1}(x^*)$ and hence by Banach fixed point theorem, there exists a unique $\tilde{x}_{N,s}(\omega) \in S_{\rho_1}(x^*)$ such that $G_N(\omega)(\tilde{x}_{N,s}(\omega))=\tilde{x}_{N,s}(\omega)$, i.e. $F_N(\omega)(\tilde{x}_{N,s}(\omega))=0$ for all $\omega \in A_N(s)$. Moreover, $\tilde{x}_{N,s}(\omega) = \lim_{k\rightarrow\infty} [G_N(\omega)]^{(k)}(y)$ for any $y\in S_{\rho_0}(x^*)$. 
  Define 
  \begin{align*}
      x_{N,s}(\omega) = \mathbf{1}_{A_N(s)}(\omega) \tilde{x}_N(\omega) 
      + \mathbf{1}_{A_N(s)^c}(\omega) x^*.                          
  \end{align*}
  Now, $x_{N,s}$ is measurable since $A_N(s)$ is measurable and $\tilde{x}_{N,s}$ is the limit of measurable random variables, and hence measurable. Moreover, $A_N(s)\subset \{ F_N(x_N) = 0 \}$ and so $\PP[F_N(x_{N,s})=0] \geq 1 - r_1(N,s) - r_2(N,s)$. Since $x_{N,s} \in S_{\rho_1}(x^*)$ and $\rho_1 < \rho_0$, using the stability of $F_N(\omega)$ established in Lemma~\ref{lem:non_singular_DF_N}, and the fact that $F_N(x_{N,s})=F(x^*)=0$, we have for any $\omega \in A_{N}(s)$
  \begin{align*}
      \Vert x_{N,s}(\omega) - x^* \Vert_U
      &\leq K_{\rho_0} \Vert F_N(\omega)(x_{N,s}) - F_N(\omega)(x^*) \Vert_V \\ 
      &\leq 4 \Vert {DF(x^*)}^{-1} \Vert_U \Vert F(x^*) - F_N(\omega)(x^*) \Vert_V \\ 
      &< 4 s \Vert {DF(x^*)}^{-1} \Vert_U,
  \end{align*}
  and so $\PP[\Vert x_{N,s}(\omega) - x^* \Vert_U \geq C s] \leq r_1(N,s)+r_2(N,s)$ with $C =  4 \Vert {DF(x^*)}^{-1} \Vert_U$. At this point, it appears that $x_{N,s}$ depends on $s$. However, notice that for all $s\leq s_0$, $A_N(s)\subset A_N(s_0)$. But, $x_{N,s}(\omega)$ is the unique solution of $F_N(\omega)(\cdot)=0$ in $S_{\rho_1}(x^*)$ for each $\omega\in A_N(s) \subset A_N(s_0)$. Therefore, $x_{N,s}(\omega) = x_{N,s_0}(\omega)$ for all $s\leq s_0$. We can thus write $x_N \coloneqq x_{N,s_0} \equiv x_{N,s}$. 

  Lastly, convergence in probability follows from the decay of the functions $r_1,r_2$ has $N\rightarrow\infty$. 
\end{proof}

\subsection{Error estimate for sampled PMP}

Now, our goal is to apply the theory developed in Sec.~\ref{sec:banach_space_excusion} to the PMP. We shall assume that $\bftheta^*$, the solution of the mean-field PMP, is such that $\bfF(\bftheta^*)=0$ (recall that this holds for $\Theta=\R^m$). Suppose further that $\bfF$ is stable at $\bftheta^*$ (see Def.~\ref{def:stability}). We wish to show that for sufficiently large $N$, with high probability $\bfF_N$ must have a solution $\bftheta^N$ close to $\bftheta^*$. 

In view of Thm.~\ref{thm:banach_finite_sample}, we only need to check that (B2)-(B3) are satisfied. This requires a few elementary estimates and an application of the infinite-dimensional Hoeffding's inequality~\cite{pinelis1986remarks}. 
\begin{lemma}
\label{lem:lipschitz_xp}
  There exist constants $K_B, K_L>0$ such that for all $\bftheta,\bfphi \in L^{\infty}([0,T],\Theta)$
  \begin{align*}
      \Vert \bfx^\bftheta \Vert_{L^\infty}
      + \Vert \bfp^\bftheta \Vert_{L^\infty}
      & \leq K_B,
      \\ 
      \Vert \bfx^\bftheta - \bfx^\bfphi \Vert_{L^\infty}
      + \Vert \bfp^\bftheta - \bfp^\bfphi \Vert_{L^\infty}
      &\leq K_L \Vert \bftheta - \bfphi \Vert_{L^\infty}. 
  \end{align*}
\end{lemma}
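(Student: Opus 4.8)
The plan is to derive both estimates from elementary Gronwall arguments applied to the coupled forward--backward system~\eqref{eq:compact_PMP_ivps}, exploiting the bounded support of $\mu_0$ from (A2$'$) together with the boundedness and Lipschitz regularity of $f$ and of the first partial derivatives of $f,L,\Phi$ guaranteed by (A1$''$). Since the support of $\mu_0$ is bounded, all the bounds below can be taken to hold uniformly over the realizations $(x_0,y_0)$ in the support, so the pathwise estimates immediately yield the corresponding bounds on $\Vert\cdot\Vert_{L^\infty}$. I would establish the uniform bound first, since the boundedness of $\bfp^\bftheta$ is needed in the proof of the Lipschitz estimate.

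For the uniform bound, boundedness of $f$ and of the initial condition give $\Vert x^\bftheta_t \Vert \leq \Vert x_0 \Vert + \int_0^t \Vert f(x^\bftheta_s,\theta_s)\Vert\,ds \leq M + T\sup\Vert f\Vert$, which is uniform in $\bftheta$ and $t$. For the costate, the terminal value $p^\bftheta_T = -\nabla_x\Phi(x^\bftheta_T,y_0)$ is bounded since $\nabla_x\Phi$ is bounded, and the costate equation $\dot p^\bftheta_t = -\nabla_xf(x^\bftheta_t,\theta_t)^T p^\bftheta_t + \nabla_x L(x^\bftheta_t,\theta_t)$ is linear in $p^\bftheta_t$ with bounded coefficients; integrating backward and applying Gronwall's inequality bounds $\Vert p^\bftheta_t\Vert$ uniformly. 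Taking $K_B$ to be the sum of the two resulting constants finishes this part.

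For the Lipschitz estimate, write $\bfdelta\bftheta = \bftheta - \bfphi$, $\bfdelta\bfx = \bfx^\bftheta - \bfx^\bfphi$, $\bfdelta\bfp = \bfp^\bftheta - \bfp^\bfphi$. The forward bound comes first: since $\delta x_0 = 0$ and $f$ is Lipschitz in both arguments, $\Vert \delta x_t\Vert \leq K_L\int_0^t (\Vert\delta x_s\Vert + \Vert\delta\theta_s\Vert)\,ds$, and Gronwall's inequality gives $\Vert\bfdelta\bfx\Vert_{L^\infty}\leq C_1\Vert\bfdelta\bftheta\Vert_{L^\infty}$. The backward bound for $\bfdelta\bfp$ is the more delicate step. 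Its terminal value is controlled by the Lipschitz continuity of $\nabla_x\Phi$, giving $\Vert\delta p_T\Vert \leq K_L\Vert\delta x_T\Vert$. For the integrand I would expand $\nabla_x H(x^\bftheta_t,p^\bftheta_t,\theta_t) - \nabla_x H(x^\bfphi_t,p^\bfphi_t,\phi_t)$ by adding and subtracting intermediate terms: the difference $\nabla_x f(x^\bftheta_t,\theta_t)^T p^\bftheta_t - \nabla_x f(x^\bfphi_t,\phi_t)^T p^\bfphi_t$ is split so that the Lipschitz regularity of $\nabla_x f$, multiplied by the now-established uniform bound $K_B$ on $\bfp$, controls the change in $\nabla_x f$, while the boundedness of $\nabla_x f$ controls the term involving $\delta p$; the $\nabla_x L$ contribution is handled by its Lipschitz property directly. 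This yields $\Vert\delta\dot p_t\Vert \leq C(\Vert\delta x_t\Vert + \Vert\delta p_t\Vert + \Vert\delta\theta_t\Vert)$, and integrating backward from $T$ and applying Gronwall again gives $\Vert\bfdelta\bfp\Vert_{L^\infty}\leq C_2\Vert\bfdelta\bftheta\Vert_{L^\infty}$. Summing the two constants produces the desired $K_L$.

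I expect the main obstacle to be the backward estimate for the costate, specifically the term $\nabla_x f(x^\bftheta_t,\theta_t)^T p^\bftheta_t - \nabla_x f(x^\bfphi_t,\phi_t)^T p^\bfphi_t$: it is precisely here that the uniform boundedness of $\bfp^\bftheta$ established in the first part is indispensable, since without an \emph{a priori} bound on the costate this product cannot be controlled by the state and control differences alone. The remaining care is bookkeeping to ensure that all constants are independent of $\bftheta,\bfphi$, which follows from the uniformity built into (A1$''$) and (A2$'$).
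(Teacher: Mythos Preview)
Your proposal is correct and follows essentially the same Gronwall-based approach as the paper: bound $\bfdelta\bfx$ via the forward equation, then $\bfdelta\bfp$ via the backward equation using Lipschitz regularity of $\nabla_x H$. If anything, your treatment is more complete, since the paper's proof omits the boundedness ($K_B$) part entirely and does not make explicit, as you do, that the uniform bound on $\bfp^\bftheta$ is what makes $\nabla_x H$ globally Lipschitz in $x$ along the trajectories.
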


\begin{proof}
  We have by Gronwall's inequality for a.e. $t$,
  \begin{align*}
      \Vert x^{\bftheta}_t - x^{\bfphi}_t \Vert
      &= 
      \left\Vert
          \int_{0}^{t}  
          f(x^{\bftheta}_s, \theta_s) - f(x^{\bftheta}_s, \theta_s) ds
      \right\Vert \\ 
      &\leq K_L \int_{0}^{t} \Vert x^{\bftheta}_s - x^{\bfphi}_s \Vert ds
      + K_L \int_{0}^{t} \Vert \theta_s - \phi_s \Vert ds \\ 
      &\leq K_L T e^{K_L T} \Vert \bftheta - \bfphi \Vert_{L^\infty}.
  \end{align*}
  Similarly,
  \begin{align*}
      \Vert p^{\bftheta}_t - p^{\bfphi}_t \Vert
      \leq& \Vert \nabla_x \Phi(x^{\bftheta}_T,y_0) - \nabla_x \Phi(x^{\bfphi}_T,y_0) \Vert \\ 
      &+ \left\Vert
          \int_{t}^{T} \nabla_x H(x^{\bftheta}_s, p^\bftheta_s, \theta_s) 
          - \nabla_x H(x^{\bfphi}_s, p^{\bfphi}_s, \phi_s)
          ds
      \right\Vert \\ 
      \leq& K_L \Vert x^{\bftheta}_T - x^{\bfphi}_T\Vert + K_L
      \int_{t}^{T} \Vert x^{\bftheta}_s - x^{\bfphi}_s \Vert ds 
      + K_L \int_{t}^{T} \Vert p^{\bftheta}_s - p^{\bfphi}_s \Vert ds \\ 
      \leq& (K_L+T)K_L T e^{2 K_L T} \Vert \bftheta - \bfphi \Vert_{L^\infty}.
  \end{align*}
\end{proof}

Notice that we can view $\bfx^{\bftheta} \equiv \bfx(\bftheta)$ as a Banach space mapping from $L^\infty([0,T],\Theta)$ to $L^\infty([0,T],\R^d)$, and similarly for $\bfp^{\bftheta}$. Below, we establish some elementary estimates for the derivatives of these mappings with respect to $\bftheta$.
\begin{lemma}
\label{lem:lipschitz_Dxp}
  There exist constants $K_B, K_L>0$ such that for all $\bftheta,\bfphi \in L^{\infty}([0,T],\Theta)$
  \begin{align*}
      \Vert D \bfx^\bftheta \Vert_{L^\infty} + \Vert D \bfp^\bftheta \Vert_{L^\infty}
      &\leq K_B, \\ 
      \Vert D \bfx^\bftheta - D \bfx^\bfphi \Vert_{L^\infty}
      + \Vert D \bfp^\bftheta - D \bfp^\bfphi \Vert_{L^\infty}
      &\leq K_L \Vert \bftheta - \bfphi \Vert_{L^\infty}. 
  \end{align*}
\end{lemma}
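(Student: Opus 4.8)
The plan is to represent both $D\bfx^\bftheta$ and $D\bfp^\bftheta$ as the solution operators of the linearized (variational) system associated with~\eqref{eq:compact_PMP_ivps}, and then to invoke Gronwall's inequality twice: once to bound these operators uniformly, and once on the difference of two such systems to obtain the Lipschitz estimate. Under (A1$''$) the maps $\bftheta\mapsto\bfx^\bftheta$ and $\bftheta\mapsto\bfp^\bftheta$ are Fr\'echet differentiable by standard smooth-dependence results for ODEs, and the directional derivative $\boldsymbol{z}\coloneqq D\bfx^\bftheta\,\bfeta$ in direction $\bfeta$ solves the forward variational equation
\begin{align*}
  \dot{z}_t = \nabla_x f(x^\bftheta_t,\theta_t)^T z_t + \nabla_\theta f(x^\bftheta_t,\theta_t)^T \eta_t, \qquad z_0 = 0,
\end{align*}
while $\boldsymbol{q}\coloneqq D\bfp^\bftheta\,\bfeta$ solves the backward variational equation obtained by differentiating the costate dynamics,
\begin{align*}
  \dot{q}_t = -\nabla^2_{xx}H\, z_t - \nabla^2_{px}H\, q_t - \nabla^2_{\theta x}H\, \eta_t, \qquad q_T = -\nabla^2_{xx}\Phi(x^\bftheta_T,y_0)\, z_T,
\end{align*}
where the Hessians of $H$ are evaluated along $(x^\bftheta_t,p^\bftheta_t,\theta_t)$. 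This is the same variational computation already used in the proof of Thm.~\ref{thm:mean_field_PMP}, now carried to second order.

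For the boundedness claim, I would use that (A1$''$) makes all relevant first and second partial derivatives of $f,L,\Phi$ bounded. Gronwall's inequality applied to the forward equation gives $\Vert\boldsymbol{z}\Vert_{L^\infty}\leq K_B\Vert\bfeta\Vert_{L^\infty}$; feeding this bound into the backward equation and applying Gronwall again (backward in time, absorbing the terminal coupling through $z_T$) gives $\Vert\boldsymbol{q}\Vert_{L^\infty}\leq K_B\Vert\bfeta\Vert_{L^\infty}$. Taking the supremum over $\Vert\bfeta\Vert_{L^\infty}\leq 1$ yields $\Vert D\bfx^\bftheta\Vert_{L^\infty}+\Vert D\bfp^\bftheta\Vert_{L^\infty}\leq K_B$, uniformly in $\bftheta$.

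For the Lipschitz estimate I would subtract the variational systems driven by $\bftheta$ and $\bfphi$. Writing $\bfdelta\boldsymbol{z}\coloneqq D\bfx^\bftheta\bfeta - D\bfx^\bfphi\bfeta$, its dynamics read $\dot{\delta z}_t = \nabla_x f(x^\bftheta_t,\theta_t)^T\delta z_t + r_t$ with $\delta z_0=0$, where the remainder $r_t$ collects the coefficient differences, e.g. $[\nabla_x f(x^\bftheta_t,\theta_t) - \nabla_x f(x^\bfphi_t,\phi_t)]^T z^\bfphi_t$ and $[\nabla_\theta f(x^\bftheta_t,\theta_t)-\nabla_\theta f(x^\bfphi_t,\phi_t)]^T\eta_t$. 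The Lipschitz continuity of the partial derivatives in (A1$''$), combined with Lemma~\ref{lem:lipschitz_xp} (so that $\Vert\bfx^\bftheta-\bfx^\bfphi\Vert_{L^\infty}\leq K_L\Vert\bftheta-\bfphi\Vert_{L^\infty}$) and the bound $\Vert\boldsymbol{z}^\bfphi\Vert_{L^\infty}\leq K_B\Vert\bfeta\Vert_{L^\infty}$ from the previous step, shows $\Vert r_t\Vert\leq K_L\Vert\bftheta-\bfphi\Vert_{L^\infty}\Vert\bfeta\Vert_{L^\infty}$. Gronwall then yields $\Vert\bfdelta\boldsymbol{z}\Vert_{L^\infty}\leq K_L\Vert\bftheta-\bfphi\Vert_{L^\infty}\Vert\bfeta\Vert_{L^\infty}$, and dividing by $\Vert\bfeta\Vert_{L^\infty}$ gives the operator Lipschitz bound for $D\bfx$. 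The costate difference $\bfdelta\boldsymbol{q}\coloneqq D\bfp^\bftheta\bfeta - D\bfp^\bfphi\bfeta$ is treated identically: its backward equation carries an analogous Lipschitz remainder (now also using that the Hessians of $H$ and $\Phi$ are Lipschitz) together with a terminal coupling through $\bfdelta\boldsymbol{z}_T$, so a final backward Gronwall pass closes the estimate.

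I expect the main obstacle to be the Lipschitz step for the costate, since its variational equation runs backward in time, is forced by $\bfdelta\boldsymbol{z}$, and has a terminal condition that itself depends on $\bfdelta\boldsymbol{z}_T$; one must therefore chain the forward estimate for $\bfdelta\boldsymbol{z}$ into the backward argument for $\bfdelta\boldsymbol{q}$ while tracking the nested Lipschitz constants. By contrast, the forward estimates and the boundedness claims are routine Gronwall computations once the variational equations are written down.
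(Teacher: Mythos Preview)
Your proposal is correct and follows essentially the same approach as the paper: write down the forward and backward variational equations for $(D\bfx^\bftheta)\bfeta$ and $(D\bfp^\bftheta)\bfeta$, apply Gronwall with (A1$''$) for the uniform bounds, then subtract the two systems, control the coefficient differences via the Lipschitz derivatives of $f,H,\Phi$ together with Lemma~\ref{lem:lipschitz_xp}, and close with Gronwall again. The paper in fact gives even less detail for the costate Lipschitz step (it simply says ``an analogous calculation''), so your explicit identification of the backward coupling through $\bfdelta\boldsymbol{z}_T$ is already more careful than the original.
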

\begin{proof}
  Let $\bfeta \in L^\infty([0,T],\R^m)$ such that $\Vert\bfeta\Vert_{L^\infty}\leq 1$. For brevity, let us also denote $f^\bftheta_t \coloneqq f(x^\bftheta_t, \theta_t)$ and $H^\bftheta_t \coloneqq H(x^\bftheta_t, p^\bftheta_t, \theta_t)$. Then, $(D\bfx^\bftheta)\bfeta$ satisfy the linearized ODE
  \begin{align*}
      \frac{d}{dt} {[(D\bfx^\bftheta)\bfeta]}_t = 
      \nabla_x f^\bftheta_t {[(D\bfx^\bftheta)\bfeta]}_t
      + \nabla_\theta f^\bftheta_t \eta_t,
      \qquad {[(D\bfx^\bftheta)\bfeta]}_0 = 0.
  \end{align*}
  Gronwall's inequality and (A1$''$) immediately implies that $\Vert {[(D\bfx^\bftheta)\bfeta]}_t \Vert \leq K_L \Vert \bfeta \Vert_{L^\infty}$, and so $\Vert D\bfx^\bftheta \Vert_{L^\infty}\leq K'$. Next,
  \begin{align*}
      \Vert {[(D\bfx^\bftheta)\bfeta]}_t - {[(D\bfx^\bfphi)\bfeta]}_t \Vert
      \leq& 
      \int_{0}^{t} \Vert \nabla_x f^\bftheta_s \Vert
      \Vert {[(D\bfx^\bftheta)\bfeta]}_s - {[(D\bfx^\bfphi)\bfeta]}_s \Vert ds \\ 
      &+ \int_{0}^{t} \Vert \nabla_x f^\bftheta_s - \nabla_x f^\bfphi_s \Vert
      \Vert {[(D\bfx^\bfphi)\bfeta]}_s \Vert ds \\
      &+ \int_{0}^{t} \Vert \nabla_\theta f^\bftheta_s 
      - \nabla_\theta f^\bfphi_s \Vert
      \Vert \eta_s \Vert ds.
  \end{align*}
  But, using Lemma~\ref{lem:lipschitz_xp}, assumption (A1$''$), we have
  \begin{align*}
      \Vert \nabla_x f^\bftheta_s - \nabla_x f^\bfphi_s \Vert
      \leq& K_L \Vert x^\bftheta_s - x^\bfphi_s \Vert + K_L \Vert \theta_s - \phi_s \Vert \\ 
      \leq& K_L \Vert \bftheta - \bfphi \Vert_{L^\infty}. 
  \end{align*}
  A similar calculation shows $\Vert \nabla_x f^\bftheta_s - \nabla_x f^\bfphi_s \Vert \leq K_L \Vert \bftheta - \bfphi \Vert_{L^\infty}$. Hence, Gronwall's inequality gives 
  \begin{align*}
      \Vert {[(D\bfx^\bftheta)\bfeta]}_t - {[(Dx^\bfphi)\bfeta]}_t \Vert 
      \leq& K_L \Vert \bfeta \Vert_{L^\infty} \Vert \bftheta - \bfphi \Vert_{L^\infty}. 
  \end{align*}
  Similarly, $(D\bfp^\bftheta)\bfeta$ satisfies the ODE
  \begin{align*}
      &\frac{d}{dt} {[(D\bfp^\bftheta)\bfeta]}_t 
      = - \nabla^2_{xx} H^\bftheta_t {[(D\bfx^\bftheta)\bfeta]}_t 
      - \nabla^2_{xp} H^\bftheta_t {[(Dp^\bftheta)\bfeta]}_t 
      - \nabla^2_{x\theta} H^\bftheta_t \eta_t,
      \\ 
      &{[(D\bfp^\bftheta)\bfeta]}_T = -\nabla^2_{xx} \Phi(x^\bftheta_T, y_0) {[(D\bfx^\bftheta)\bfeta]}_T.
  \end{align*}
  A analogous calculation as above with (A1$''$) shows that 
  \begin{align*}
      \Vert {[(D\bfp^\bftheta)\bfeta]}_t - {[(D\bfp^\bfphi)\bfeta]}_t \Vert 
      \leq& K_L \Vert \bfeta \Vert_{L^\infty} \Vert \bftheta - \bfphi \Vert_{L^\infty}. 
  \end{align*}
\end{proof}

\begin{lemma}
\label{lem:lipschitz_DF}
  Let $h:\R^d\times\R^d \times \Theta \rightarrow \R^m$ have bounded and Lipschitz derivatives in all arguments and define the mapping $\bftheta \mapsto \bfG(\bftheta)$ where ${[\bfG(\bftheta)]}_t = h(x^\bftheta_t, p^\bftheta_t, \bftheta_t)$. Then, $\bfG$ is differentiable and $D\bfG$ is bounded and Lipschitz $\mu_0$-a.s., i.e.
  \begin{align*}
      \Vert D\bfG(\bftheta) \Vert_{L^\infty} 
      &\leq K_B, \\
      \Vert D\bfG(\bftheta) - D\bfG(\bfphi) \Vert_{L^\infty} 
      &\leq K_L \Vert \bftheta - \bfphi \Vert_{L^\infty}.
  \end{align*}
  for some $K_B, K_L>0$ and all $\bftheta,\bfphi\in L^\infty([0,T],\Theta)$. 
\end{lemma}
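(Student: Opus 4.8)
The plan is to compute the Fr\'echet derivative $D\bfG(\bftheta)$ explicitly via the chain rule and then read off both estimates from the bounds already established for the flow maps and their derivatives. Since $\bftheta\mapsto\bfx^\bftheta$ and $\bftheta\mapsto\bfp^\bftheta$ are differentiable (their derivatives $D\bfx^\bftheta,D\bfp^\bftheta$ were constructed in Lemma~\ref{lem:lipschitz_Dxp}) and $h$ is itself differentiable with bounded derivatives, the composition $\bfG(\bftheta)_t = h(x^\bftheta_t,p^\bftheta_t,\theta_t)$ is differentiable and, for any direction $\bfeta\in L^\infty([0,T],\R^m)$,
\begin{align*}
  {[(D\bfG(\bftheta))\bfeta]}_t
  = \nabla_x h_t^\bftheta\, {[(D\bfx^\bftheta)\bfeta]}_t
  + \nabla_p h_t^\bftheta\, {[(D\bfp^\bftheta)\bfeta]}_t
  + \nabla_\theta h_t^\bftheta\, \eta_t,
\end{align*}
where $h_t^\bftheta \coloneqq h(x^\bftheta_t,p^\bftheta_t,\theta_t)$. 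For the boundedness claim I would apply the triangle inequality to this display with $\Vert\bfeta\Vert_{L^\infty}\le 1$: each factor $\nabla_x h_t^\bftheta,\nabla_p h_t^\bftheta,\nabla_\theta h_t^\bftheta$ is bounded by hypothesis on $h$, while $\Vert{[(D\bfx^\bftheta)\bfeta]}_t\Vert$ and $\Vert{[(D\bfp^\bftheta)\bfeta]}_t\Vert$ are bounded uniformly by Lemma~\ref{lem:lipschitz_Dxp}, yielding $\Vert D\bfG(\bftheta)\Vert_{L^\infty}\le K_B$.

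The Lipschitz estimate is the substantive part. Applying $D\bfG(\bftheta)-D\bfG(\bfphi)$ to $\bfeta$ with $\Vert\bfeta\Vert_{L^\infty}\le 1$ produces three differences of the form $A^\bftheta B^\bftheta - A^\bfphi B^\bfphi$ (the third has $B=\eta_t$ fixed), which I would telescope as $A^\bftheta(B^\bftheta-B^\bfphi)+(A^\bftheta-A^\bfphi)B^\bfphi$. The factor differences $B^\bftheta-B^\bfphi$, namely $(D\bfx^\bftheta-D\bfx^\bfphi)\bfeta$ and $(D\bfp^\bftheta-D\bfp^\bfphi)\bfeta$, are controlled by $K_L\Vert\bftheta-\bfphi\Vert_{L^\infty}$ via Lemma~\ref{lem:lipschitz_Dxp}, while the factor differences $A^\bftheta-A^\bfphi$, namely $\nabla h_t^\bftheta-\nabla h_t^\bfphi$, are controlled by the Lipschitz continuity of $\nabla h$ composed with Lemma~\ref{lem:lipschitz_xp}:
\begin{align*}
  \Vert\nabla h_t^\bftheta - \nabla h_t^\bfphi\Vert
  \le K_L\big(\Vert x^\bftheta_t-x^\bfphi_t\Vert + \Vert p^\bftheta_t-p^\bfphi_t\Vert + \Vert\theta_t-\phi_t\Vert\big)
  \le K_L\Vert\bftheta-\bfphi\Vert_{L^\infty}.
\end{align*}
Using boundedness of the remaining factors (bounded derivatives of $h$, and boundedness of $D\bfx^\bftheta,D\bfp^\bftheta$ from Lemma~\ref{lem:lipschitz_Dxp}), every term is $\le K_L\Vert\bfeta\Vert_{L^\infty}\Vert\bftheta-\bfphi\Vert_{L^\infty}$; taking the supremum over $t$ and over $\Vert\bfeta\Vert_{L^\infty}\le1$ yields the claim.

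I expect the main obstacle to be the rigorous justification of the Fr\'echet differentiability of $\bfG$ — verifying that the remainder in the definition~\eqref{eq:frechet_deriv_defn} vanishes uniformly in $t$, rather than merely writing down the formal chain-rule expression. This rests on the differentiability of the flow maps $\bftheta\mapsto\bfx^\bftheta,\bfp^\bftheta$ used implicitly in Lemma~\ref{lem:lipschitz_Dxp} together with the differentiability of $h$; once differentiability is granted, the remaining estimates are routine telescoping arguments. A minor but important point is that all bounds are pathwise and hold $\mu_0$-a.s., because assumption (A2$'$) confines $(x_0,y_0)$ to a bounded set, so Lemma~\ref{lem:lipschitz_xp} gives $\Vert\bfx^\bftheta\Vert_{L^\infty}+\Vert\bfp^\bftheta\Vert_{L^\infty}\le K_B$ uniformly, and hence the constants $K_B,K_L$ may be taken independent of $\bftheta,\bfphi$ and of the realization.
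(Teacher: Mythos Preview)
Your proposal is correct and follows essentially the same approach as the paper: compute $D\bfG$ via the chain rule, read off boundedness from the bounds on $\nabla h$ and Lemma~\ref{lem:lipschitz_Dxp}, and obtain the Lipschitz estimate by telescoping each product and invoking Lemmas~\ref{lem:lipschitz_xp} and~\ref{lem:lipschitz_Dxp}. If anything, your write-up is more explicit than the paper's, which omits the analogous terms ``for simplicity'' and does not flag the Fr\'echet-differentiability justification you rightly note as the only nontrivial point.
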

\begin{proof}
  Let $\bfeta \in L^\infty([0,T],\R^m)$ such that $\Vert \bfeta \Vert_{L^\infty} \leq 1$. 
  By assumptions on $h$ and Lemmas~\ref{lem:lipschitz_xp} and~\ref{lem:lipschitz_Dxp}, $D\bfG$ exists and by the chain rule,
  \begin{align*}
      {[(D\bfG(\bftheta))\bfeta]}_t = \nabla_x h^\bftheta_t {[(D\bfx^\bftheta)\bfeta]}_t
      + \nabla_p h^\bftheta_t {[(D\bfp^\bftheta)\bfeta]}_t
      + \nabla_\theta h^\bftheta_t {\eta}_t,
  \end{align*}
  Thus, $\Vert {[(D\bfG(\bftheta))\bfeta]}_t \Vert \leq K_B \Vert \bfeta \Vert_{L^\infty}$ and
  \begin{align*}
      \Vert {[(D\bfG(\bftheta))\bfeta]}_t - {[(D\bfG(\bfphi))\bfeta]}_t \Vert
      \leq& K_B \Vert \nabla_x h^\bftheta_t - \nabla_x h^\bfphi_t \Vert \\ 
      &+ K_L \Vert {[(Dx^\bftheta)\bfeta]}_t - {[(Dx^\bfphi)\bfeta]}_t \Vert \\ 
      &+ \ldots
  \end{align*}
  The other terms are split similarly and we omit them for simplicity. Using Lipschitz assumption of the derivatives of $h$ and Lemmas~\ref{lem:lipschitz_xp} and~\ref{lem:lipschitz_Dxp}, we obtain the result. 
\end{proof}

Applying Lemma~\ref{lem:lipschitz_DF} with $h=H$ for each sample $i$ and summing, we see that $D\bfF_N$ is bounded and Lipschitz $\mu_0$-a.s.\,and so (B3) is satisfied. It remains to check (B2). Using Lemma~\eqref{lem:lipschitz_DF} and (A1$''$), $\Vert \bfF_N \Vert_{L^\infty}$ and $\Vert D\bfF_N \Vert_{L^\infty}$ are almost surely bounded, hence they satisfy standard concentration estimates. We have:

\begin{lemma}
\label{lem:concentration}
  There exist constants $K_B, K_L>0$ such that for all $\bftheta,\bfphi\in L^\infty([0,T],\Theta)$
  \begin{align*}
      &\P [ \Vert \bfF(\bftheta) - \bfF_N(\bftheta) \Vert_{L^\infty} \geq s] 
      \leq 2 \exp{\left( -\frac{N s^2}{K_1 + K_2 s} \right)}, \\  
      &\P [ \Vert D\bfF(\bftheta) - D\bfF_N(\bftheta) \Vert_{L^\infty} \geq s] 
      \leq 2 \exp{\left( -\frac{N s^2}{K_1 + K_2 s} \right)}.
  \end{align*}
\end{lemma}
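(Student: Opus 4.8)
The plan is to recognize $\bfF_N(\bftheta)$ and $D\bfF_N(\bftheta)$ as empirical averages of independent, identically distributed, almost-surely bounded random elements of the path space, and then invoke the infinite-dimensional Hoeffding/Bernstein inequality~\cite{pinelis1986remarks}. Concretely, for each sample $i$ I would introduce the random path
\[
  \bfG^i(\bftheta) \coloneqq \{ \nabla_\theta H(x^{\bftheta,i}_t, p^{\bftheta,i}_t, \theta_t) : 0\leq t\leq T\},
\]
so that $\bfF_N(\bftheta) = \tfrac1N \sum_{i=1}^N \bfG^i(\bftheta)$ with $\bfF(\bftheta) = \E_{\mu_0}\bfG^1(\bftheta)$, and likewise $D\bfF_N(\bftheta) = \tfrac1N\sum_i D\bfG^i(\bftheta)$ with $D\bfF(\bftheta) = \E_{\mu_0} D\bfG^1(\bftheta)$. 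Since the pairs $(x_0^i,y_0^i)$ are i.i.d.\,under $\mu_0$, the summands $\bfG^i(\bftheta)$ (resp.\,$D\bfG^i(\bftheta)$) are i.i.d.\,copies, and the whole problem reduces to a concentration statement for a sum of independent, mean-zero, bounded random elements of $L^\infty([0,T],\R^m)$.

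First I would establish the almost-sure uniform boundedness of the summands, which supplies the constants entering the Bernstein bound. By assumption (A2$'$) the support of $\mu_0$ is bounded, so Lemma~\ref{lem:lipschitz_xp} gives $\Vert \bfx^{\bftheta,i}\Vert_{L^\infty} + \Vert \bfp^{\bftheta,i}\Vert_{L^\infty}\leq K_B$ uniformly in $i$ and $\bftheta$. Combined with the boundedness of $\nabla_\theta H$ on this compact region (a consequence of (A1$''$), since $H = p\cdot f - L$ and $f,L$ have bounded $\theta$-derivatives), this yields $\Vert \bfG^i(\bftheta)\Vert_{L^\infty}\leq K_B$ almost surely, so the centred variable $\bfG^i(\bftheta)-\bfF(\bftheta)$ is bounded by $2K_B$ with second moment at most $(2K_B)^2$. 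The derivative summands are handled identically: applying Lemma~\ref{lem:lipschitz_DF} with $h=\nabla_\theta H$ (whose own derivatives are bounded and Lipschitz under (A1$''$)) shows $\Vert D\bfG^i(\bftheta)\Vert_{L^\infty}\leq K_B$ almost surely. On the event $\{\Vert \bfF_N(\bftheta)-\bfF(\bftheta)\Vert\geq s\}$ the sum $\sum_{i=1}^N(\bfG^i(\bftheta)-\bfF(\bftheta))$ has norm at least $Ns$, and feeding the a.s.\,bound $2K_B$ and the variance proxy $N(2K_B)^2$ into a Bernstein-type tail produces exactly $2\exp(-Ns^2/(K_1+K_2 s))$ with $K_1\propto K_B^2$ and $K_2\propto K_B$; the same computation for the $D\bfG^i$ sum gives the second inequality.

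The hard part is that the target norm is the supremum norm and $L^\infty([0,T],\R^m)$ is not one of the smooth (type-2) Banach spaces in which the Pinelis inequality applies verbatim; a crude pointwise Bernstein estimate plus a union bound over a time grid would only deliver the stated tail up to a polynomial-in-$1/s$ prefactor equal to the grid size, which cannot be absorbed uniformly as $s\to 0$. I would resolve this by exploiting the time-regularity already present: because $x^{\bftheta,i}_t,p^{\bftheta,i}_t$ are uniformly Lipschitz in $t$ (the forward and backward ODEs have bounded right-hand sides under (A1$''$)) and $\nabla_\theta H$ is Lipschitz, the centred paths $\bfG^i(\bftheta)-\bfF(\bftheta)$ are uniformly equicontinuous with a deterministic modulus.

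Given this equicontinuity, either route closes the argument. Route (i): prove the clean Bernstein bound in the Hilbert space $L^2([0,T],\R^m)$, where Pinelis applies directly, and then upgrade to $L^\infty$ componentwise via the one-dimensional interpolation inequality $\Vert h\Vert_{L^\infty}\leq C\,\Vert h\Vert_{L^2}^{2/3}\Vert h'\Vert_{L^\infty}^{1/3}$, the derivative factor being deterministically bounded. Route (ii): run the grid argument and absorb the prefactor into the exponential for the range of $s$ relevant to Thm.~\ref{thm:banach_finite_sample}, after adjusting $K_1,K_2$. Both routes are routine once the equicontinuity and the almost-sure bounds above are in hand, so the one genuinely delicate point is this reduction from $L^\infty$ to a space where infinite-dimensional concentration is available.
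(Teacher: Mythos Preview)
Your approach is the same as the paper's: recognize $\bfF_N(\bftheta)-\bfF(\bftheta)$ and $D\bfF_N(\bftheta)-D\bfF(\bftheta)$ as averages of i.i.d.\ centred random elements that are almost-surely bounded (via Lemmas~\ref{lem:lipschitz_xp} and~\ref{lem:lipschitz_DF} under (A1$''$), (A2$'$)), then invoke the infinite-dimensional Hoeffding/Bernstein inequality of Pinelis. The paper's proof is literally the one-line version of this: it records the uniform bound $\Vert \bfF_N(\bftheta)\Vert_{L^\infty}\leq K_B$ and cites \cite{pinelis1986remarks}, Corollary~2, to obtain $2\exp(-Ns^2/(2K_B^2+\tfrac23 K_B s))$, and says ``similarly for $D\bfF_N$''.

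Where you diverge from the paper is in flagging that Pinelis's result is stated for Hilbert (or $2$-smooth) Banach spaces and that $L^\infty([0,T],\R^m)$ is not in this class. The paper does not address this point at all; it simply applies the inequality in $\Vert\cdot\Vert_{L^\infty}$. Your concern is legitimate, and your proposed remedies --- passing through $L^2([0,T],\R^m)$ and upgrading via the deterministic Lipschitz-in-$t$ regularity of $t\mapsto \nabla_\theta H(x^{\bftheta,i}_t,p^{\bftheta,i}_t,\theta_t)$, or a grid-plus-equicontinuity argument --- are reasonable ways to make the statement rigorous, and they go beyond what the paper actually supplies. In short: same strategy, but you are being more careful than the paper on a genuine technical hypothesis.
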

\begin{proof}
  Since $\Vert \bfF(\bftheta) \Vert$ is uniformly bounded by $K_B$, we can apply the infinite-dimensional Hoeffding's inequality (\cite{pinelis1986remarks}, Corollary 2) to obtain
  \begin{align*}
      \P [ \Vert \bfF(\bftheta) - \bfF_N(\bftheta) \Vert_{L^\infty} \geq s] 
      \leq 2 \exp{\left( -\frac{N s^2}{2 K_B^2 + (2/3)K_B s} \right)}.
  \end{align*}
  and similarly for $D\bfF_N$.
\end{proof}

Given the above results, we can deduce Thm.~\ref{thm:emp_to_population_F} directly.
\begin{theorem}
\label{thm:emp_to_population_F}
  Let $\bftheta^*$ be a solution $\bfF=0$ (defined in~\eqref{eq:F_defn}), which is stable on $S_\rho(\bftheta^*)$ for some $\rho>0$. Then, there exists positive constants $s_0,C,K_1,K_2$ and $\rho_1<\rho$ and a random variable $\bftheta^N \in S_{\rho_1}(\bftheta^*) \subset L^\infty([0,T],\Theta)$, such that
  \begin{align*}
      \P [ \Vert \bftheta - \bftheta^N \Vert_{L^\infty} \geq C s] 
      &\leq 4 \exp{\left( -\frac{N s^2}{K_1 + K_2 s} \right)}, \qquad s\in(0,s_0], \\ 
      \P [ \bfF_N(\bftheta^N) \neq 0 ] 
      &\leq 4 \exp{\left( -\frac{N s_0^2}{K_1 + K_2 s_0} \right)}.
  \end{align*}
  In particular, $\bftheta^N \rightarrow \bftheta^*$ and $\bfF_N(\bftheta^N)\rightarrow 0$ in probability. 
\end{theorem}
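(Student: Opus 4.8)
The plan is to recognize Thm.~\ref{thm:emp_to_population_F} as a direct instantiation of the abstract result Thm.~\ref{thm:banach_finite_sample}, so that the entire task reduces to verifying hypotheses (B1)--(B3) for the concrete PMP mappings. Accordingly, I would first fix the identifications $U = L^\infty([0,T],\Theta)$, $V = L^\infty([0,T],\R^m)$, $F = \bfF$, $F_N = \bfF_N$, and $x^* = \bftheta^*$, where $\bfF$ and $\bfF_N$ are the residual maps defined in~\eqref{eq:F_defn} and its sampled analogue. With these identifications the conclusions of Thm.~\ref{thm:banach_finite_sample} translate verbatim into the two probability estimates asserted here, together with the convergence-in-probability statement, so all that remains is checking that the three structural assumptions hold.

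Next I would discharge the hypotheses in turn. Condition (B1) is exactly the standing assumption of the theorem, namely that $\bfF(\bftheta^*)=0$ and $\bfF$ is stable on $S_\rho(\bftheta^*)$. For (B3) and the differentiability clause of (B2), I would invoke Lemma~\ref{lem:lipschitz_DF} applied with $h = \nabla_\theta H$ for each sample $i$ and then average over $i=1,\dots,N$; since (A1$''$) guarantees that $\nabla_\theta H$ has bounded and Lipschitz derivatives in all arguments, Lemma~\ref{lem:lipschitz_DF} shows that $D\bfF_N(\bftheta)$ exists, is bounded, and is Lipschitz in $\bftheta$ uniformly in the sample, $\mu_0$-a.s., which is precisely (B3); the same lemma applied to $\bfF$ itself supplies the existence of $D\bfF$. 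For the probabilistic clause of (B2) I would appeal to Lemma~\ref{lem:concentration}, which, via the infinite-dimensional Hoeffding inequality and the uniform boundedness of $\bfF_N$ and $D\bfF_N$ coming from Lemmas~\ref{lem:lipschitz_xp}--\ref{lem:lipschitz_Dxp}, yields the rates
\begin{equation*}
  r_1(N,s) = r_2(N,s) = 2\exp\!\left(-\frac{N s^2}{K_1 + K_2 s}\right),
\end{equation*}
both of which tend to $0$ as $N\to\infty$. Here it is essential that $\E\,\bfF_N(\bftheta)=\bfF(\bftheta)$, so the concentration is centered correctly at $\bfF$; this unbiasedness is immediate from the definition of the empirical average.

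With (B1)--(B3) verified, I would simply quote Thm.~\ref{thm:banach_finite_sample} to obtain the random variables $\bftheta^N \in S_{\rho_1}(\bftheta^*)$ and the stated estimates, where the combined bound $r_1(N,s)+r_2(N,s) = 4\exp(-N s^2/(K_1+K_2 s))$ produces the constant $4$ appearing in the conclusion. I do not expect a genuine conceptual obstacle at this stage, precisely because the substantive work has already been carried out in the abstract fixed-point argument of Thm.~\ref{thm:banach_finite_sample} and in the preparatory regularity and concentration lemmas; the step is essentially bookkeeping. The one point that warrants care is ensuring that the concentration estimate of Lemma~\ref{lem:concentration}, which holds pointwise in $\bftheta$, is invoked at the correct argument---namely at $\bftheta^*$, which is all that the proof of Lemma~\ref{lem:non_singular_DF_N} (and hence of Thm.~\ref{thm:banach_finite_sample}) actually uses---and that the regularity imposed by (A1$''$) is indeed strong enough for $\nabla_\theta H$ to satisfy the hypotheses of Lemma~\ref{lem:lipschitz_DF}; both are routine to confirm.
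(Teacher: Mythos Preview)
Your proposal is correct and follows exactly the paper's own approach: the paper's proof is the single sentence ``Use Thm.~\ref{thm:banach_finite_sample} with estimates derived in Lemmas~\ref{lem:lipschitz_DF} and~\ref{lem:concentration},'' and the verification of (B1)--(B3) you spell out is precisely what the paper carries out in the paragraphs immediately preceding the theorem. Your identification $h=\nabla_\theta H$ in Lemma~\ref{lem:lipschitz_DF} is in fact more accurate than the paper's own ``$h=H$'' (since $\bfF$ is built from $\nabla_\theta H$), but this is a minor notational point and the argument is otherwise identical.
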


\begin{proof}
    Use Thm.~\ref{thm:banach_finite_sample} with estimates derived in Lemmas~\ref{lem:lipschitz_DF} and~\ref{lem:concentration}. 
\end{proof}

Thm.~\ref{thm:emp_to_population_F} describes the convergence of a solution of the first order condition of the PMP solution in the sampled situation to the population solution of the PMP. Together with a condition of local strong concavity, we show further in Cor.~\ref{cor:emp_to_population_H} that this stationary solution is in fact a local/global maximum of the sampled PMP. The claim regarding the convergence of loss function values is provided in Cor.~\ref{cor:emp_to_population_J}. 
\begin{corollary}
\label{cor:emp_to_population_H}
  Let $\bftheta^*$ be a solution of the mean-filed PMP such that there exists $\lambda_0>0$ satisfying that for a.e. $t\in[0,T]$, $\E \nabla^2_{\theta\theta}H(x_t^{\bftheta^*},p_t^{\bftheta^*},\theta^*_t)+\lambda_0 I  \preceq 0$. Then the random variable $\bftheta^N$ defined in Thm.~\ref{thm:emp_to_population_F} satisfies, with probability at least $1 - 6\exp{[ -{(N \lambda_0^2)}/{(K_1 + K_2 \lambda_0)}]}$, that $\theta^N_t$ is a strict local maximum of sampled Hamiltonian $\frac{1}{N}\sum_{i=1}^N H(x^{\bftheta^N,i}_t, p^{\bftheta^N,i}_t, \theta)$. In particular, if the finite-sampled Hamiltonian has a unique local maximizer, then $\bftheta^N$ is a solution of the sampled PMP with the same high probability.
\end{corollary}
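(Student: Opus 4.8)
The plan is to combine Thm.~\ref{thm:emp_to_population_F}, which already locates a nearby zero $\bftheta^N$ of $\bfF_N$, with a second-order analysis of the sampled Hamiltonian. On the high-probability event furnished by Thm.~\ref{thm:emp_to_population_F} we have $\bfF_N(\bftheta^N)=0$, so $\theta^N_t$ is automatically a critical point of the map $\theta \mapsto \frac{1}{N}\sum_{i=1}^N H(x^{\bftheta^N,i}_t, p^{\bftheta^N,i}_t, \theta)$ for a.e.\,$t$. It therefore remains to establish the second-order condition: that the sampled Hessian $\frac{1}{N}\sum_{i=1}^N \nabla^2_{\theta\theta} H(x^{\bftheta^N,i}_t, p^{\bftheta^N,i}_t, \theta^N_t)$ is negative definite with the stated probability.

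To this end I would compare this sampled Hessian with the mean-field Hessian $A\coloneqq \E \nabla^2_{\theta\theta}H(x^{\bftheta^*}_t, p^{\bftheta^*}_t, \theta^*_t)$, which is $\preceq -\lambda_0 I$ by hypothesis. Writing $g\coloneqq \nabla^2_{\theta\theta}H$, I split the difference by the triangle inequality into (i) a \emph{perturbation term} $\frac{1}{N}\sum_i [g(x^{\bftheta^N,i}_t, p^{\bftheta^N,i}_t, \theta^N_t) - g(x^{\bftheta^*,i}_t, p^{\bftheta^*,i}_t, \theta^*_t)]$ and (ii) a \emph{sampling term} $\frac{1}{N}\sum_i g(x^{\bftheta^*,i}_t, p^{\bftheta^*,i}_t, \theta^*_t) - A$. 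For (i), assumption (A1$''$) makes $g$ bounded and Lipschitz in all arguments (the linear dependence on $p$ and boundedness of $p^{\bftheta^*}$ via Lemma~\ref{lem:lipschitz_xp} handle the costate), so the pathwise Gronwall estimate of Lemma~\ref{lem:lipschitz_xp}, applied per sample, bounds the operator norm of (i) uniformly in $t$ by $K_L \Vert\bftheta^N-\bftheta^*\Vert_{L^\infty}$. For (ii), since $g$ is almost surely bounded, the infinite-dimensional Hoeffding inequality~\cite{pinelis1986remarks}, exactly as in Lemma~\ref{lem:concentration} but applied to $g$ in place of $\nabla_\theta H$, controls its $L^\infty$-in-time operator norm by a bound of the form $2\exp[-Ns^2/(K_1+K_2 s)]$.

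I would then set the threshold so that each term is below $\lambda_0/2$. For (i), applying Thm.~\ref{thm:emp_to_population_F} with $s\propto\lambda_0$ forces $\Vert\bftheta^N-\bftheta^*\Vert_{L^\infty}$ small enough that $K_L\Vert\bftheta^N-\bftheta^*\Vert_{L^\infty}<\lambda_0/2$ on an event of probability at least $1-4\exp[-N\lambda_0^2/(K_1+K_2\lambda_0)]$ after absorbing constants; on this same event $\bfF_N(\bftheta^N)=0$, supplying the critical-point property for free. For (ii), the Hoeffding bound at level $\lambda_0/2$ contributes failure probability $2\exp[-N\lambda_0^2/(K_1+K_2\lambda_0)]$. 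A union bound over these two failure events produces the factor $4+2=6$. On the complementary good event, $\Vert B-A\Vert < \lambda_0$ where $B$ is the sampled Hessian, so $u^{\top} B u \leq -\lambda_0 + \Vert B-A\Vert < 0$ for every unit vector $u$; hence $B\prec 0$ and $\theta^N_t$ is a strict local maximum of the sampled Hamiltonian. The final assertion is then immediate: a strict local maximizer that is the \emph{unique} local maximizer is the global maximizer, so the Hamiltonian maximization condition~\eqref{eq:compact_PMP_max_sampled} holds and $\bftheta^N$ solves the sampled PMP with the same probability.

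The main obstacle is guaranteeing that the Hessian concentration in (ii) holds \emph{uniformly in} $t\in[0,T]$ and in operator norm, rather than merely pointwise; this is precisely what forces the passage to the infinite-dimensional Hoeffding inequality on the Banach space $L^\infty([0,T],\R^{m\times m})$, and it relies on the boundedness of $\nabla^2_{\theta\theta}H$ furnished by (A1$''$) together with the uniform bound on $p^{\bftheta^*}$ from Lemma~\ref{lem:lipschitz_xp}. Once these estimates are secured, the bookkeeping of constants needed to match the exponent $N\lambda_0^2/(K_1+K_2\lambda_0)$ is routine.
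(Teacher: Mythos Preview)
Your proposal is correct and follows essentially the same approach as the paper: both arguments split the deviation of the sampled Hessian at $\bftheta^N$ from the mean-field Hessian at $\bftheta^*$ into a perturbation term (controlled by Lipschitz continuity of $\nabla^2_{\theta\theta}H$ along trajectories together with Thm.~\ref{thm:emp_to_population_F}) and a sampling term (controlled by the infinite-dimensional Hoeffding inequality), then take a union bound to arrive at the factor $6$. You are in fact slightly more explicit than the paper in noting that $\bfF_N(\bftheta^N)=0$ supplies the critical-point property and in spelling out the final negative-definiteness step via $u^\top B u \leq -\lambda_0 + \Vert B-A\Vert$.
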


\begin{proof}
  Let
  \begin{align*}
    & [\bfI(\bftheta)]_t \coloneqq \E_{\mu_0} \nabla^2_{\theta\theta}H(x_t^{\bftheta},p_t^{\bftheta},\theta_t),\\
      & [\bfI_N(\bftheta)]_t \coloneqq \frac{1}{N} \sum_{i=1}^N \nabla^2_{\theta\theta}H(x_t^{\bftheta,i},p_t^{\bftheta,i},\theta_t).
  \end{align*}
  Given the assumption of negative definite Hessian matrix at $\theta^*_t$:
  \begin{align*}
    [\bfI(\bftheta^*)]_t + \lambda_0 I \preceq 0,
  \end{align*}
  what we need to prove is
  \begin{align*}
    \PP [\|\bfI_N(\bftheta^N) - \bfI(\bftheta^*) \|_{L^\infty} \geq 2c\lambda_0] \leq o(1), \quad N\rightarrow \infty,
  \end{align*}
  for sufficient small $c>0$. Consider the following estimate
  \begin{align*}
    &\PP [\|\bfI_N(\bftheta^N) - \bfI(\bftheta^*) \|_{L^\infty}  \geq 2c\lambda_0] \\ 
    \leq 
      &\PP [\|\bfI_N(\bftheta^N) - \bfI_N(\bftheta^*) \|_{L^\infty}  \geq c\lambda_0 \text{ and } \|\bfI_N(\bftheta^*) - \bfI(\bftheta^*) \|_{L^\infty}  \geq c\lambda_0]\\
    \leq &\PP [\|\bfI_N(\bftheta^N) - \bfI_N(\bftheta^*) \|_{L^\infty}  \geq c\lambda_0] + \PP [\|\bfI_N(\bftheta^*) - \bfI(\bftheta^*) \|_{L^\infty}  \geq c\lambda_0].
  \end{align*}
  To bound the first term, we can use similar steps as in the proof of Lemma~\ref{lem:lipschitz_DF}, which gives
  \begin{align*}
    \esssup_{t\in[0,T]} 
    \|\nabla^2_{\theta\theta}H(x^{\bftheta}_t,p^{\bftheta}_t,\theta_t) 
    - \nabla^2_{\theta\theta}H(x^{\bfphi}_t,p^{\bfphi}_t,\phi_t)\| \leq
      K_L \|\bftheta-\bfphi\|_{L^\infty}.
  \end{align*}
  Hence we have
  \begin{align*}
    \PP [\|\bfI_N(\bftheta^N) - \bfI_N(\bftheta^*) \|_{L^\infty}  \geq c\lambda_0] 
    \leq & \PP [\|\bftheta^N - \bftheta^*\|_{L^\infty}  \geq c\lambda_0/K_L] \\
    \leq & 4 \exp{\left( -\frac{N \lambda_0^2}{K_1 + K_2 \lambda_0} \right)}.
  \end{align*}
  To bound the second term, note that $\Vert \bfI_N(\bftheta) \Vert$ is uniformly bounded, we can apply the infinite-dimensional Hoeffding's inequality (\cite{pinelis1986remarks}, Corollary 2) to obtain
  \begin{align*}
    \PP [\|\bfI_N(\bftheta^*) - \bfI(\bftheta^*) \|_{L^\infty}  \geq c\lambda_0] \leq 2 \exp{\left( -\frac{N \lambda_0^2}{K'_1 + K'_2 \lambda_0} \right)}.
  \end{align*}
  Combining two estimates together, we complete the proof.
\end{proof}

\begin{corollary}
\label{cor:emp_to_population_J}
  Let $\bftheta^N$ be as defined in Thm.~\ref{thm:emp_to_population_F}. Then there exist constants $K_1, K_2$ such that,
  \begin{align*}
      \PP[|J(\bftheta^N) - J(\bftheta^*)| \geq s] \leq 4 \exp{\left( -\frac{N s^2}{K_1 + K_2s} \right)},
      \qquad s\in(0,s_0].
  \end{align*}
\end{corollary}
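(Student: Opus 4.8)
The plan is to reduce the statement to the deterministic Lipschitz continuity of the population objective $\bftheta \mapsto J(\bftheta)$ with respect to the $L^\infty$ norm, and then transfer the high-probability control estimate already furnished by Thm.~\ref{thm:emp_to_population_F}. Concretely, if I can exhibit a constant $K_J>0$ with $|J(\bftheta)-J(\bfphi)|\leq K_J\|\bftheta-\bfphi\|_{L^\infty}$ for all admissible $\bftheta,\bfphi$, then, since $\bftheta^N$ is (by its construction in Thm.~\ref{thm:banach_finite_sample}, where $x_N$ is set to $x^*$ off the good event) a random variable taking values in $S_{\rho_1}(\bftheta^*)$, I will have
\[
\PP[|J(\bftheta^N)-J(\bftheta^*)|\geq s]\leq \PP[\|\bftheta^N-\bftheta^*\|_{L^\infty}\geq s/K_J],
\]
and the right-hand side is controlled by Thm.~\ref{thm:emp_to_population_F}.

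For the Lipschitz estimate, I would start from the definition $J(\bftheta)=\E_{\mu_0}[\Phi(x^\bftheta_T,y_0)+\int_0^T L(x^\bftheta_t,\theta_t)\,dt]$ and split the difference $J(\bftheta)-J(\bfphi)$ into a terminal-loss contribution and a running-loss contribution. For the terminal term, the boundedness of $\nabla_x\Phi$ in (A1$''$) gives $|\Phi(x^\bftheta_T,y_0)-\Phi(x^\bfphi_T,y_0)|\leq K_L\|x^\bftheta_T-x^\bfphi_T\|$, and Lemma~\ref{lem:lipschitz_xp} bounds $\|\bfx^\bftheta-\bfx^\bfphi\|_{L^\infty}$ by $K_L\|\bftheta-\bfphi\|_{L^\infty}$. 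For the running term, I would add and subtract $L(x^\bfphi_t,\theta_t)$ and use that $L$ has bounded $x$- and $\theta$-derivatives under (A1$''$) to get $|L(x^\bftheta_t,\theta_t)-L(x^\bfphi_t,\phi_t)|\leq K_L(\|x^\bftheta_t-x^\bfphi_t\|+\|\theta_t-\phi_t\|)$, integrate over $[0,T]$, and invoke Lemma~\ref{lem:lipschitz_xp} once more together with $\|\theta_t-\phi_t\|\leq\|\bftheta-\bfphi\|_{L^\infty}$. All these bounds are uniform in $(x_0,y_0)$ — they hold $\mu_0$-a.s.\,because (A2$'$) bounds the support and the flows stay bounded by Lemma~\ref{lem:lipschitz_xp} — so taking $\E_{\mu_0}$ preserves them and produces the desired $K_J$.

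The remaining step is purely bookkeeping with the tail bound. Applying Thm.~\ref{thm:emp_to_population_F} with the substitution $C s' = s/K_J$, i.e.\,$s'=s/(C K_J)$, yields $\PP[\|\bftheta^N-\bftheta^*\|_{L^\infty}\geq s/K_J]\leq 4\exp(-N (s')^2/(K_1+K_2 s'))$, which after absorbing $C$ and $K_J$ into redefined constants $K_1,K_2$ is exactly of the claimed form. The only point requiring care is the admissible range of $s$: the estimate of Thm.~\ref{thm:emp_to_population_F} holds for $s'\in(0,s_0]$, so the transferred inequality is valid for $s\in(0,C K_J s_0]$, and I would simply redefine $s_0$ (possibly shrinking it) accordingly. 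This rescaling is the only subtlety; no genuine obstacle arises, since the corollary is ultimately a direct Lipschitz-transfer of the concentration already established for $\bftheta^N$.
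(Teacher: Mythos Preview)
Your proposal is correct and follows essentially the same approach as the paper: both establish Lipschitz continuity of $J$ in $\bftheta$ via (A1$''$) and Lemma~\ref{lem:lipschitz_xp}, then transfer the tail bound from Thm.~\ref{thm:emp_to_population_F} by absorbing the Lipschitz constant into redefined $K_1,K_2,s_0$. Your write-up is in fact slightly more careful than the paper's (you make the expectation explicit and spell out the rescaling of the $s$-range), but the argument is the same.
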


\begin{proof}
  Note that $J(\bftheta) = \Phi(x^\bftheta_T,y_0) + \int_{0}^{T} L(x^\bftheta_t, \theta_t) dt$. Using Lemma~\ref{lem:lipschitz_xp}, we have
  \begin{align*}
      |J(\bftheta^N) - J(\bftheta^*)|
      \leq& K_L \Vert x^{\bftheta^*}_T - x^{\bftheta^N}_T \Vert
      + K_L \int_{0}^{T} \Vert x^{\bftheta^*}_t - x^{\bftheta^N}_t \Vert + \Vert \theta^*_t - \theta^N_t \Vert dt \\ 
      \leq& K_L' \Vert \bftheta^N - \bftheta^* \Vert_{L^\infty}
  \end{align*}
  Thus, using Thm.~\ref{thm:emp_to_population_F}, we have 
  \begin{align*}
      \PP [ |J(\bftheta^N) - J(\bftheta^*)| \geq s ] &\leq 
      \PP [ \Vert \bftheta^N - \bftheta^* \Vert_{L^\infty} \geq s/K_L'] \\ 
      &\leq 4 \exp{\left( -\frac{N s^2}{K_1 + K_2s} \right)}.
  \end{align*}
\end{proof}

Thm.~\ref{thm:emp_to_population_F} and Cor.~\ref{cor:emp_to_population_H} establishes a rigorous connection between solutions of the mean-field PMP and its sampled version: when a solution of the mean-field PMP $\bftheta^*$ is stable, then for large $N$, with high probability we can find in its neighborhood a random variable $\bftheta^N$ that is a stationary solution of the sampled PMP~\eqref{eq:compact_PMP_max_sampled}. If further that the maximization is non-degenerate (local concavity assumption in Thm.~\ref{cor:emp_to_population_H}) and unique, then $\theta^N_t$ maximizes the sample Hamiltonian with high probability. Note that this concavity condition is local in the sense that it only has to be satisfied at the paths involving $\bftheta^*$, whereas the strong concavity condition required in Thm.~\ref{thm:small_T_uniqueness} is stronger as it is global. Of course, in the case where the Hamiltonian is quadratic in $\theta$, i.e.\,when $f(x,\theta)$ is linear in $\theta$ and the regularization $L(x,\theta)$ is quadratic in $\theta$ (this is still a nonlinear network, see Example \ref{ex:linear_with_theta}), then all concavity assumptions in the preceding results are satisfied. 

The key assumption for the results in this section is the stability condition (c.f.\,Def.~\ref{def:stability}). In general, this is different from the assumption that $H(x^{\bftheta^*}_t,p^{\bftheta^*}_t,\theta^*_t)$ is strongly concave point-wise in $t$. However, note that one can show using triangle inequality and estimates in Lemma~\ref{lem:lipschitz_Dxp} that if $H$ is strongly concave with sufficiently large concavity parameter ($\lambda_0$), then the solution must stable.  Intuitively, the stability assumption ensures that we can find a small region around $\bftheta^*$ such that it is isolated from other solutions, and this then allows us to find a nearby solution of the sampled problem that is close to this solution. On the other hand, if $DF(\bftheta^*)$ has a non-trivial kernel, then one cannot expect to construct a $\bftheta^N$ that is close to $\bftheta^*$ itself, or any specific point in the kernel. However, one may still find $\bftheta^N$ that is close to the whole kernel. 

Cor.~\ref{cor:emp_to_population_J} is a simple consequence of the previous results, and is effectively a statement about generalization error of the learning model, because it quantifies the difference between loss function values when evaluated on either the population or empirical risk minimization solution. 
We mention an interesting point of the optimal control framework alluded to earlier in the context of generalization. Notice that since we have only assumed that the controls or weights $\bftheta$ are measurable and essentially bounded (and thus can be very discontinuous) in time, we are always dealing with the case where the number of parameters are infinite. Even in this case, we can derive non-trivial generalization estimates. This is to be contrasted with classical generalization bounds based on measures of complexity~\cite{friedman2001elements}, where the number of parameters adversely affect generalization. Note that there are many recent works which take on such issues from varying angles, e.g.,~\cite{neyshabur2017exploring,dziugaite2017computing,arora2018stronger}.

\section{Conclusion}
\label{sec:conclusion}

In this paper, we introduce the mathematical formulation of the population risk minimization problem of continuous-time deep learning in the context of mean-field optimal control. In this framework, the compositional structure of deep neural networks is explicitly taken into account as the evolution of the dynamical system in time. 
To analyze this mean-field optimal control problem, we proceed from two parallel but interrelated perspectives, namely the dynamic programming approach and the maximum principle approach. 
In the former, an infinite-dimensional Hamilton-Jacobi-Bellman (HJB) equation for the optimal loss function values is derived, with state variables being the joint distribution of input-target pairs. The viscosity solution of the derived HJB equation provides us with a complete characterization of the original population risk minimization problem, giving both the optimal loss function value and a optimal feedback control policy. 
In the latter approach, we prove a mean-field Pontryagin's maximum principle that constitutes necessary conditions for optimality. This can be viewed as a local characterization of optimal trajectories, and indeed we formally show that the PMP can be derived from the HJB equation using the method of characteristics. Using the PMP, we study a sufficient condition for which the solution of the PMP is unique. Lastly, we prove an existence result of sampled PMP solutions near the stable solutions of the mean-field PMP. We show how this result connects with generalization errors of deep learning and provide a new direction for obtaining generalization estimates in the case of infinite number of parameters and finite number of sample points. Overall, this work establishes a concrete mathematical framework from which novel ways to attack the pertinent problems in practical and theoretical deep learning may be further developed. 

As a specific motivation for future work, notice that here, we have assumed that the state dynamics $f$ is independent of distribution law of $x_t$  and only depends on $x_t$ itself and control $\theta_t$. There are also more complex network structures used in practice which are beyond this assumption. Let us take batch normalization as an example~\cite{ioffe2015batch}. A batch normalization step involves normalizing inputs using some distribution $\nu$, and then rescale (and re-center) the output using trainable variables so that the matching space is recovered. This has been found empirically to have a good regularization effect for training, but theoretical analysis of such effects are limited. In the present setting, we can write a batch normalization operation as
\begin{equation*}
  BN_{\gamma,\beta}(x,\nu)\coloneqq \gamma \odot \frac{x-\int z\,d\nu(z)}{\sqrt{(z-\int z'\,d\nu(z'))^2d\nu(z) + \epsilon}} + \beta. 
\end{equation*} 
Here $\gamma,\beta \in \R^d$ are trainable parameters, $\odot$ denotes element-wise multiplication, and $\epsilon$ is a small constant avoiding division by zero. Suppose we insert a Batch Normalization operation immediately after the skip connection, the corresponding state dynamics $f$ becomes
\begin{equation*}
  f(x,\theta) \rightarrow f(BN_{\gamma,\beta}(x,\nu),\theta).
\end{equation*}
By incorporating $\gamma,\beta$ into the parameter vector $\theta$ and taking $\nu$ as the population distribution of the state, the equation of state dynamics has the following abstract form
\begin{equation}
  \dot{x}_t=\tilde{f}(x_t,\theta,\P_{x_t}).
  \label{eq:meanfield_forward}
\end{equation}
This is a more general formulation typically considered in the mean-field optimal control literature. The associated objective is very similar to~\eqref{eq:mean_field_ctrl_prob} except the state dynamics:
\begin{align}
    \begin{split}
        \inf_{\bftheta \in L^{\infty}([0,T],\Theta)}              
        J(\bftheta) &\coloneqq \E_{\mu_0} 
        \left[    
        \Phi(x_T, y_0)                        
        + \int_{0}^{T} L(x_t, \theta_t) dt 
        \right],\\ 
        & \text{Subject to~\eqref{eq:meanfield_forward}}. 
    \end{split}
    \label{eq:mean_field_ctrl_prob_general} 
\end{align}
The dynamic programming principle and the maximum principle are still applicable in this setting. For instance, the associated HJB equation can be derived as
\begin{equation*}
\begin{cases}
\displaystyle{\frac{\partial v}{\partial t} + \inf_{\theta\in\Theta}\left\langle \partial_\mu v(t,\mu)(.)\cdot \bar{f}(.,\theta,\mu)+ \bar{L}(.,\theta),\,\mu\right\rangle  = 0,}&\text{on~~} [0,T) \times \cP_2(\R^{d+l}),\\
\displaystyle{v(T, \mu)=\langle\bar{\Phi}(.),\mu\rangle}, &\text{on~~} \cP_2(\R^{d+l}),
\end{cases}
\end{equation*}
where $\bar{f}(w,\theta,\mu)\coloneqq (\tilde{f}(x,\theta,\mu_x),0)$.
Similarly, we expect the following mean field PMP (in the lifted space) to hold under suitable conditions:
\begin{align*}
      & \dot{x}^*_t = \tilde{f}(x^*_t, \theta^*_t, \P_{x^*_t}),
      & & x^*_t = x_0,  \\ 
      & \dot{p}^*_t = - \nabla_x H(x^*_t, p^*_t, \theta^*_t, \P_{x^*_t}),
      & & p^*_T = -\nabla_x \Phi(x^*_T, y_0), \\
  & \E_{\mu_0} H(x^*_t, p^*_t, \theta^*_t, \P_{x^*_t})
      \geq \E_{\mu_0} H(x^*_t, p^*_t, \theta, \P_{x^*_t}),
      & &
      \forall\,\theta \in \Theta, \quad a.e.\,t\in[0,T], 
\end{align*}
where the Hamiltonian function $H: \R^d \times \R^d \times \Theta \times \cP_2(\R^d) \rightarrow \R$ is given by 
\begin{equation*}
  H(x,p,\theta,\mu) = p\cdot f(x, \theta, \mu) - L(x, \theta).
\end{equation*}
Thus, batch normalization can be viewed as a general form of mean-field dynamics, and can be treated in a principled way under the mean-field optimal control framework. We leave the study of further implications of this connection on the theoretical understanding of batch normalization to future work. 

\section*{Acknowledgments}
The work of W. E and J. Han are supported in part by ONR grant N00014-13-1-0338 and Major Program of NNSFC under grant 91130005. Q. Li is supported by the Agency for Science, Technology and Research, Singapore.

\bibliographystyle{unsrt}
\bibliography{ref}

\end{document}